\documentclass[12pt,a4paper,reqno]{amsart}

\usepackage{amsmath,amsfonts,amsthm,amsopn,amssymb,cite,mathrsfs}
\usepackage{cancel,marginnote}

\usepackage{color,enumitem,graphicx}
\usepackage[colorlinks=true,urlcolor=blue,
citecolor=red,linkcolor=blue,linktocpage,pdfpagelabels]{hyperref}
\usepackage[english]{babel}

\usepackage[left=2.8cm,right=2.8cm,top=2.8cm,bottom=2.8cm]{geometry}

\makeindex

\numberwithin{equation}{section}
\newtheorem{theorem}{Theorem}[section]
\newtheorem{lemma}{Lemma}[section]

\newtheorem{remark}{Remark}[section]

\usepackage[hyperpageref]{backref}

\title[Schr\"{o}dinger equation for the in-between critical exponents case]
{Explicit formula of the critical mass and the energy ground state solution for the mixed Local-nonlocal Schr\"{o}dinger equation for the in-between critical exponents case}

\thanks{This work is supported by the Natural Science Research Project of Anhui Educational Committee (Grant No. 2023AH040155).}

\subjclass[2010]{35J10; 35J20}
\keywords{In-between critical exponents;
critical mass;
energy ground state solution;
Gagliardo-Nirenberg inequality;
Schr\"{o}dinger equation; local and nonlocal operator}

\begin{document}
\maketitle

\centerline{\scshape Yu Su}
\medskip
{\footnotesize
\centerline{School of Mathematics and Big Data, Anhui University of Science and Technology}
\centerline{Huainan, Anhui 232001, China}
\centerline{yusumath@aust.edu.cn}
}

\centerline{\scshape Hichem Hajaiej}
\medskip
{\footnotesize
\centerline{Department of Mathematics, California State University at Los Angeles}
\centerline{Los Angeles, California 90032, USA}
\centerline{hhajaie@calstatela.edu}
}

\begin{abstract}
In this paper,
we study the Schr\"{o}dinger equation with the mixed local-nonlocal operator in the presence of a power nonlinearity $u|u|^{p-2}$, when the exponent $p$ is between the two critical exponents $2+\frac{4s}{N}$(of the fractional Laplacian), and $2+\frac{4}{N}$( of the Laplacian). The equation presents a new phenomena that we will call here ``the in-between critical exponents" $p\in(2+\frac{4s}{N},2+\frac{4}{N}).$
In this case, there exists a critical mass $c_{0}$, such that for any mass $c<c_{0}$ there is no energy ground state solution, and for any mass $c\geqslant c_{0}$ there exists an energy ground state solution.

Our first main contribution consists in establishing an explicit formula of the critical mass $c_{0}$ via the best constant of the Gagliardo-Nirenberg inequality for the mixed local-nonlocal Laplacian.

We also prove the existence of an optimizer of the Gagliardo-Nirenberg inequality for the mixed local-nonlocal operator.
We then show that the optimizer (after some suitable scaling) is an energy ground state solution $u_{c_{0}}$ (with critical mass $c_{0}$).

This is a key ingredient to determine sufficient and necessary conditions of existence and non-existence of energy ground state solutions in the in-between critical exponents case.
Finally,
we show that the energy ground state solution $u_{c_{0}}$ is an optimizer of the Gagliardo-Nirenberg inequality for the mixed local-nonlocal operator.
\end{abstract}

\section{Introduction}
We study the following  Schr\"{o}dinger equation with a mixed local-nonlocal operator
\begin{equation}\label{1.1}
\begin{aligned}
\begin{cases}
i\frac{\partial \psi}{\partial t}
+\Delta \psi
-(-\Delta)^{s}\psi=|\psi|^{p-2}\psi, \ \ (t,x)\in \mathbb{R}^{+}\times\mathbb{R}^{N},\\
\psi(0,x)=\psi_{0}(x),
\end{cases}
\end{aligned}
\end{equation}
where $N\geqslant3$, $s\in(0,1)$, $2+\frac{4s}{N}<p<2+\frac{4}{N}$,
and $(-\Delta)^{s}$ is the so-called fractional
Laplacian, which can be defined, for any $u:\mathbb{R}^{N}\to\mathbb{R}$ smooth enough, by setting
$
\mathcal{F}((-\Delta)^{s}u)(\xi)
=
|\xi|^{2s}
\mathcal{F}(u)(\xi)
$,
$\xi\in\mathbb{R}^{N}$,
where $\mathcal{F}$ represents the Fourier transform, see \cite{Nezza-Palatucci-Valdinoci2012BSM}.

Equation \eqref{1.1} arises in the population dynamics model with both classical and nonlocal diffusion see Dipierro-Lippi-Valdinocci \cite{Dipierro-Lippi-Valdinocci2022AIHP}.
Moreover, Biagi-Dipierro-Valdinoci-Vecchi \cite{Biagi-Dipierro-Valdinoci-Vecchi2021CPDE} pointed out that equation \eqref{1.1} can apply to study the different types of ``regional" or ``global" restrictions that may reduce the spreading of a pandemic disease.
As explained in \cite{Dipierro-Valdinocci2021PA} by Dipierro-Valdinocci,
equation \eqref{1.1} also describes an ecological niche for a mixed local and nonlocal dispersal.
For a more detailed account, we refer the reader to Bernstein-type regularity results
\cite{Cabre-Dipierro-Valdinoci2022ARMA},
the Aubry-Mather theory for sums of different fractional Laplacians
\cite{Llave-Valdinoci2009Poincare}, numerics\cite{Biswas-Jakobsen-Karlsen2010SIAM-J-N-A},
probability and stochastics
\cite{Mimica2016PLMS,ChenZQ-Kim-Song-Vondracek2012TAMS}.
The sum and the difference of two or more fractional Laplacians appear
in many other fields, we refer the reader to page 2 of Chen-Bhakta-Hajaiej \cite{ChenHY-Bhakta-Hajaiej2022JDE},
and also refer to \cite{Hajaiej-Perera2022DIE,Maione-Mugnai-Vecchi2023FFAC,Cangiotti-Caponi-Maione-Vitillaro2023Milan,Cangiotti-Caponi-Maione-Vitillaro2024FFAC,Giovannardi-Mugnai-Vecchi2023JMAA}.

The standing waves to equation \eqref{1.1} are
\begin{equation*}
\begin{aligned}
\psi(t,x)=e^{i\omega t}u(x),
\end{aligned}
\end{equation*}
where $\omega \in \mathbb{R}$. It is straightforward to see that a standing wave $\psi$ is a solution to equation \eqref{1.1} if and only if $u$ is a solution to the following  Schr\"{o}dinger equation 
\begin{equation}\label{MLN}
\begin{aligned}
-\Delta u
+(-\Delta)^{s}u+\omega u=|u|^{p-2}u, \ \ x\in \mathbb{R}^{N}.
\end{aligned}
\tag{MLN}
\end{equation}
If $u$ is a weak solution of equation \eqref{MLN},
then for any $\varphi\in H^{1}(\mathbb{R}^{N})$ it satisfies
\begin{equation*}
\begin{aligned}
0=&
\int_{\mathbb{R}^{N}}
\nabla u
\nabla \varphi
\mathrm{d}x
+
\int_{\mathbb{R}^{N}}
\int_{\mathbb{R}^{N}}
\frac{(u(x)-u(y))(\varphi(x)-\varphi(y))}{|x-y|^{N+2s}}
\mathrm{d}x
\mathrm{d}y
+\omega
\int_{\mathbb{R}^{N}}
u\varphi
\mathrm{d}x\\
&-
\int_{\mathbb{R}^{N}}
|u|^{p-2}
u\varphi
\mathrm{d}x.
\end{aligned}
\end{equation*}
Su-Valdinoci-Wei-Zhang
\cite{Su-Valdinoci-Wei-Zhang2022MZ,Su-Valdinoci-Wei-Zhang2025JDE}
and Dipierro-Su-Valdinoci-Zhang
\cite{Dipierro-Su-Valdinoci-Zhang2025DCDS} studied the regularity of solutions to equation \eqref{MLN}.
In this current paper,
we address another highly important aspect.
We are looking for standing waves with prescribed mass for equation \eqref{MLN}.
From a physical point of view,
the most interesting solutions,
the so-called {\it energy ground state solution},
are the minimizers of the problem
\begin{equation*}
\begin{aligned}
m_{c}:=\inf_{u\in S_{c}}J(u),
\end{aligned}
\end{equation*}
where
\begin{equation*}
\begin{aligned}
S_{c}:=\left\{u\in H^{1}(\mathbb{R}^{N})\bigg| \int_{\mathbb{R}^{N}}
|u|^{2}
\mathrm{d}x
=c\right\}.
\end{aligned}
\end{equation*}
Here, the energy functional is
\begin{equation*}
\begin{aligned}
J(u):=\frac{1}{2}\int_{\mathbb{R}^{N}}
|\nabla u|^{2}
\mathrm{d}x
+\frac{1}{2}
\int_{\mathbb{R}^{N}}
\int_{\mathbb{R}^{N}}
\frac{|u(x)-u(y)|^{2}}{|x-y|^{N+2s}}
\mathrm{d}x
\mathrm{d}y
-
\frac{1}{p}
\int_{\mathbb{R}^{N}}
|u|^{p}
\mathrm{d}x.
\end{aligned}
\end{equation*}
We know that each minimizer $u\in S_{c}$ of $m_{c}$ can be associated to a Lagrange multiplier $\omega>0$ such that
$(u,\omega)$ weakly solves \eqref{MLN}.

There are two important exponents related to  \eqref{MLN}:
\begin{equation*}
\begin{aligned}
2+\frac{4s}{N},
\end{aligned}
\end{equation*}
and
\begin{equation*}
\begin{aligned}
2+\frac{4}{N},
\end{aligned}
\end{equation*}
where the exponent $2+\frac{4}{N}$ is the mass-critical exponent for the classical Laplacian problem,
and the exponent $2+\frac{4s}{N}$ is the mass-critical exponent for the fractional Laplacian problem.
In the mixed local-nonlcoal operator setting,
the exponent $2+\frac{4}{N}$ is the mass critical exponent.
Although the exponent $2+\frac{4s}{N}$ is not the mass-critical exponent in the mixed local-nonlcoal operator problem, it brings a new phenomenon:
The in-between critical exponents case
\begin{equation*}
\begin{aligned}
2+\frac{4s}{N}<p<2+\frac{4}{N}.
\end{aligned}
\end{equation*}
This new phenomenon arises from the ``conflict" between the local operator and nonlocal operator, see Theorem \ref{Theorem1.1}.

Luo-Hajaiej \cite{LuoTJ-Hajaiej2022ANS} considered the following nonlinear Schr\"odinger equation with the mixed fractional Laplacians ($0<s_{1}<s_{2}<1$)
\begin{equation}\label{MF}
\begin{aligned}
(-\Delta)^{s_{1}} u
+(-\Delta)^{s_{2}}u+\omega u=|u|^{p-2}u, \ \ x\in \mathbb{R}^{N}.
\end{aligned}
\tag{MF}
\end{equation}
When $s_{2}=1$,
 equation \eqref{MF} reduces to equation \eqref{MLN}.
Let us recall the main results  of \cite{LuoTJ-Hajaiej2022ANS}.

\noindent
{\bf Theorem A} \cite[Theorem  1.2]{LuoTJ-Hajaiej2022ANS}
Let $N\geqslant1$ and $0<s_{1}<s_{2}<1$.
\begin{enumerate}
\item
if $p\in(2,2+\frac{4s_{1}}{N})$,
then equation \eqref{MF} has an energy ground state solution for any $c>0$;

\item
if $p=2+\frac{4s_{1}}{N}$,
then there exists $\bar{c}_{1}>0$ such that equation \eqref{MF} has an energy ground state solution if  $c>\bar{c}_{1}$,
and  equation \eqref{MF} has no energy ground state solution if  $c<\bar{c}_{1}$;

\item
$p\in(2+\frac{4s_{1}}{N},2+\frac{4s_{2}}{N})$,
then there exists $\bar{c}_{0}>0$ such that equation \eqref{MF} has an energy ground state solution if and only if $c\geqslant \bar{c}_{0}$;

\item
$p=2+\frac{4s_{2}}{N}$,
then equation \eqref{MF} has no energy ground state solution for any $c>0$.
\end{enumerate}
For $p\in(2+\frac{s_{2}}{N},\frac{2N}{N-2s_{2}})$,
Chergui-Gou-Hajaiej \cite{Chergui-Gou-Hajaiej2023CVPDE} investigated the existence, multiplicity and orbital instability results for equation \eqref{MF}.

To the best of our knowledge,
there are no results addressing the energy ground state solution for \eqref{MLN}.
With minor changes to \cite[Theorems 1.1 and 1.2]{LuoTJ-Hajaiej2022ANS},
one has the following result about the in-between critical exponents case.

\begin{theorem}\label{Theorem1.1}
Let $N\geqslant3$, $s\in(0,1)$ and  $p\in(2+\frac{4s}{N},2+\frac{4}{N})$.
Then there exists $c_{0}>0$ such that equation \eqref{MLN} has an energy ground state solution if and only if $c\geqslant c_{0}$.
\end{theorem}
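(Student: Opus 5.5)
We argue by scaling, following the strategy of \cite{LuoTJ-Hajaiej2022ANS}. Write $A(u)=\int|\nabla u|^{2}$, $B(u)=\iint\frac{|u(x)-u(y)|^{2}}{|x-y|^{N+2s}}$, $C(u)=\int|u|^{p}$. The first step is to show that $m_{c}$ is finite and that $m_{c}\leqslant 0$ for every $c>0$. Finiteness comes from the classical Gagliardo--Nirenberg inequality $C(u)\leqslant K\,A(u)^{\frac{N(p-2)}{4}}c^{\frac{2p-N(p-2)}{4}}$: since $\frac{N(p-2)}{4}<1$, the functional satisfies $J(u)\geqslant \frac12 A(u)-K' A(u)^{\theta}$ with $\theta<1$, so $J$ is coercive on $S_{c}$. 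For $m_{c}\leqslant0$, take the mass-preserving dilation $u_{t}(x)=t^{N/2}u(tx)$. Then
\begin{equation*}
J(u_{t})=\tfrac{t^{2}}2A(u)+\tfrac{t^{2s}}2B(u)-\tfrac{t^{N(p-2)/2}}{p}C(u)\to0 \quad (t\to0),
\end{equation*}
so $m_{c}\leqslant0$. In the regime $t\to0$ the nonlocal term $t^{2s}$ dominates, because $2s<N(p-2)/2$; this is exactly where $p>2+\frac{4s}{N}$ enters.

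Next we establish monotonicity. Under the map $u\mapsto\sqrt{\tau}\,u$ with $\tau>1$ we get $J(\sqrt\tau u)\leqslant \tau J(u)$, with strict inequality whenever $C(u)>0$. This gives $m_{\tau c}\leqslant\tau m_{c}$, so $c\mapsto m_{c}$ is nonincreasing. It also gives strict subadditivity $m_{c}<m_{a}+m_{c-a}$ whenever $m_{c}<0$. To show that $m_{c}<0$ for large $c$, fix $u$ and use $J(\sqrt{c}u)=c\bigl(\frac12A+\frac12B-\frac{c^{(p-2)/2}}{p}C\bigr)$, which is negative for large $c$. We then define $c_{0}=\inf\{c>0: m_{c}<0\}$.

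We also need $c_{0}>0$, i.e.\ $m_{c}=0$ for small $c$. This requires the mixed inequality $C(u)\leqslant K_{0}\,(A+B)^{\cdot}\cdots$. More simply, interpolate $C(u)\leqslant K_{1}A^{\alpha}B^{\beta}c^{\gamma}$, with exponents chosen so that the scaling is consistent and $\alpha+\beta=1$. This is possible precisely because $\frac{N(p-2)}{4}$ lies between $s$ and $1$ in the appropriate homogeneity. Applying Young's inequality then gives $C(u)\leqslant K_{2}c^{\gamma}(A+B)$ with $\gamma=\frac{p-2}{2}>0$. Hence $J(u)\geqslant(\frac12-\frac{K_{2}}{p}c^{\gamma})(A+B)>0$ for small $c$. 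In that range $m_{c}=0$ is not attained, since any attaining $u$ would have $J(u)>0$.

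For existence when $c>c_{0}$, we have $m_{c}<0$. A minimizing sequence is bounded in $H^{1}$, and because $m_{c}<0$ the sequence $C(u_{n})$ stays away from zero. Lions' lemma then excludes vanishing, and strict subadditivity excludes dichotomy (alternatively, use the Brezis--Lieb splitting after translation). We conclude that a minimizer exists.

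For nonexistence when $c<c_{0}$, we have $m_{c}=0$. If $u\in S_{c}$ attained it, then $J(\sqrt{\tau}u)<\tau J(u)=0$ for $c<\tau c<c_{0}$, contradicting $m_{\tau c}=0$.

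The delicate case, and in my view the main obstacle, is $c=c_{0}$, where $m_{c_{0}}=0$ by continuity of $c\mapsto m_c$. Here we take $c_{n}\downarrow c_{0}$ together with minimizers $u_{n}$ of $m_{c_{n}}<0$. We need to show that $u_{n}$ neither vanishes nor goes to zero in $A+B$; we want $\liminf (A+B)(u_{n})>0$ and $\liminf C(u_{n})>0$. Since $J(u_{n})<0$, we have $C(u_{n})>\frac p2(A+B)(u_{n})$. Combined with $C\leqslant K_{2}c^{\gamma}(A+B)$, this does not yet prevent $A+B\to0$ by itself. To rule that out, I would use the Pohozaev/virial identity for the minimizers, $A+sB=\frac{N(p-2)}{2p}C$, together with the Lagrange equation. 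If $A+B\to0$, the interpolation bound forces $C=o(A+B)$ unless the ratio of $A$ to $B$ degenerates, and both degenerate regimes contradict the identity, because $s<\frac{N(p-2)}4<1$. Once $\liminf C(u_{n})>0$, Lions' lemma gives a nontrivial weak limit after translation. Weak lower semicontinuity gives $J(u)\leqslant0$ with $\|u\|_{2}^{2}\leqslant c_{0}$, and the nonexistence argument forces $\|u\|_{2}^{2}=c_{0}$ and $J(u)=0=m_{c_{0}}$. So $u$ is an energy ground state at mass $c_{0}$.
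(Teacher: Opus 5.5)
Most of your outline is sound. The amplitude scaling $J(\sqrt\tau u)\leqslant\tau J(u)$ gives monotonicity, strict subadditivity and nonexistence for $c<c_0$ more directly than the paper's Lemma~\ref{Lemma3.1} and Step~1, and you correctly get $0<c_0<\infty$. The critical-mass step, however, has a genuine gap: the claim $\liminf_n C(u_n)>0$ is not proved. Unlike the case $c>c_0$, here $J(u_n)=m_{c_n}\to0$, so you get no direct lower bound on $C(u_n)$.

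Every relation you invoke is invariant under $(A,B,C,\omega)\mapsto\lambda(A,B,C,\omega)$ with $c$ fixed: the mixed bound $C\leqslant K_1c^{\gamma}A^{\alpha}B^{\beta}$ with $\alpha+\beta=1$, the Pohozaev identity $A+sB=\frac{N(p-2)}{2p}C$, the Nehari identity $\omega c=C-A-B$, and $J(u_n)<0$. Such relations can at best confine $A/B$ to a compact subset of $(0,\infty)$, which is all your ``degenerate regimes'' argument achieves. They cannot exclude $A\asymp B\to0$ with $C\asymp A+B$. Your sentence also has the logic reversed. Since $A^{\alpha}B^{\beta}/(A+B)=r^{\alpha}/(1+r)$ with $r=A/B$, the mixed bound yields $C=o(A+B)$ exactly when $r$ degenerates. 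In the non-degenerate regime, which is the only one left open, it yields nothing.

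What is missing is a bound that is superlinear as $B\to0$. The paper uses the purely fractional inequality \eqref{2.2} with $q=p$, namely $C\lesssim B^{\frac{N(p-2)}{4s}}c^{\frac p2-\frac{N(p-2)}{4s}}$. Its exponent $\frac{N(p-2)}{4s}$ exceeds $1$, so $A+B\to0$ would force $J(u_n)\geqslant0$ (see \eqref{3.2}). Be aware that \eqref{2.2} at $q=p$ needs $p<\frac{2N}{N-2s}$, which covers the whole range only when $s\geqslant\frac{N}{N+2}$.

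An argument valid for every admissible $p$ is a frequency splitting. Write $B(u)=c_{N,s}\int|\xi|^{2s}|\hat u|^{2}\,\mathrm{d}\xi$ and split $u_n=P_{\leqslant\delta}u_n+P_{>\delta}u_n$ with Fourier projections; these split $A$, $B$ and the mass orthogonally. The high part has mass at most $c_{N,s}^{-1}\delta^{-2s}B(u_n)\to0$. The low part satisfies $A\leqslant c_{N,s}^{-1}\delta^{2-2s}B$. Apply the mixed bound to each piece, where $\alpha=\frac{pN-2N-4s}{4(1-s)}>0$ is the exponent of $A$, using $A^{\alpha}B^{\beta}\leqslant A+B$ on the high part. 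This gives
\[
C(u_n)\leqslant 2^{p-1}K_1\Bigl(c_n^{\gamma}\bigl(c_{N,s}^{-1}\delta^{2-2s}\bigr)^{\alpha}+\|P_{>\delta}u_n\|_{L^{2}(\mathbb{R}^{N})}^{p-2}\Bigr)(A+B)(u_n).
\]
Letting $n\to\infty$ and then $\delta\to0$ gives $C(u_n)=o\bigl((A+B)(u_n)\bigr)$, contradicting $C(u_n)>\frac p2(A+B)(u_n)$.

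Separately, $J$ is not weakly lower semicontinuous, because of the $-\frac1p\int|u|^{p}$ term. So $J(u)\leqslant0$ for the weak limit must come from the Br\'ezis--Lieb splitting $J(u_n)=J(u)+J(u_n-u)+o(1)$, as in \eqref{3.3}. Combine it with $J(u_n-u)\geqslant m_{\|u_n-u\|_2^2}=0$ for large $n$, which holds because $\|u_n-u\|_2^2\to c_0-\|u\|_2^2<c_0$.
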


Theorem \ref{Theorem1.1} only states the existence of the critical mass $c_{0}$, and its corresponding energy ground state solution $u_{c_{0}}$.
The main goal of this paper is to provide an explicit formula of $c_{0}$, and to establish some qualitative and quantitative properties of $u_{c_{0}}$.

\subsection{Main Results}
In order to characterize the critical mass $c_{0}$ and the energy ground state solution $u_{c_{0}}$,
we start with the Gagliardo-Nirenberg inequality for the mixed local-nonlocal operator:
\begin{theorem}\label{Theorem1.2}
Let $N\geqslant3$,
$s\in(0,1)$
and $p\in(2+\frac{4s}{N},2+\frac{4}{N})$.
For $u\in H^{1}(\mathbb{R}^{N})$,
there exists a constant $C_{1,s,p}>0$ such that
\begin{equation}\label{1.2}
\begin{aligned}
\|u\|_{L^{p}(\mathbb{R}^{N})}^{p}
\leqslant
C_{1,s,p}
\|u\|_{D^{s,2}(\mathbb{R}^{N})}^{\frac{4-N(p-2)}{2(1-s)}}
\|u\|_{D^{1,2}(\mathbb{R}^{N})}^{\frac{N(p-2)-4s}{2(1-s)}}
\|u\|_{L^{2}(\mathbb{R}^{N})}^{p-2}.
\end{aligned}
\end{equation}
Also,
there exists a non-negative radial optimizer $Q\in H^{1}(\mathbb{R}^{N})$ for inequality \eqref{1.2},
and it is a weak solution of
\begin{equation*}
\begin{aligned}
-\Delta Q
+(-\Delta)^{s}Q+\omega Q=|Q|^{p-2}Q, \ \ x\in \mathbb{R}^{N},
\end{aligned}
\end{equation*}
with $\omega>0$.
\end{theorem}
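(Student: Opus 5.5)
The inequality \eqref{1.2} follows from interpolation and the classical Gagliardo--Nirenberg inequality. Set $\theta=\frac{N(p-2)}{2p}\in(0,1)$, so that $\|u\|_{L^p}\le C\|u\|_{\dot H^{\sigma}}\|u\|_{L^2}^{1-\sigma}$-type bounds hold with $\sigma=\frac{N(p-2)}{4}$. The hypothesis on $p$ places $\sigma$ strictly in $(s,1)$. Writing $\sigma=\lambda s+(1-\lambda)\cdot 1$ gives $\lambda=\frac{1-\sigma}{1-s}=\frac{4-N(p-2)}{4(1-s)}$. The Fourier-side H\"older inequality then yields $\|u\|_{\dot H^{\sigma}}\le\|u\|_{\dot H^s}^{\lambda}\|u\|_{\dot H^1}^{1-\lambda}$. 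Combining this with the fractional Sobolev/Gagliardo--Nirenberg bound $\|u\|_{L^p}^p\le C\|u\|_{\dot H^\sigma}^{2\sigma}\|u\|_{L^2}^{p-2\sigma}$ and computing exponents, the $\dot H^s$ and $\dot H^1$ powers are $2\sigma\lambda$ and $2\sigma(1-\lambda)$. Since $p-2\sigma=p-2-N(p-2)/2+2$ does not match, I will instead use the homogeneous Sobolev embedding directly.

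Concretely, the cleaner route is as follows. We have $\|u\|_{L^p}\le C\|u\|_{\dot H^{\tau}}$ with $\tau=N(\frac12-\frac1p)$, and $\tau$ can be interpolated among $0,s,1$. Equivalently, I simply verify scaling: the two invariances $u\mapsto \mu u$ and $u\mapsto u(\cdot/R)$, together with the single relation fixing the $L^2$ power to be $p-2$, determine the exponents in \eqref{1.2}. One then checks that $\alpha=\frac{4-N(p-2)}{2(1-s)}$ and $\beta=\frac{N(p-2)-4s}{2(1-s)}$ satisfy $\alpha+\beta=2$ and $s\alpha+\beta=\frac{N(p-2)}{2}$. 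So the inequality amounts to $\|u\|_p^p\le C\|u\|_2^{p-2}\|u\|_{\dot H^{s}}^\alpha\|u\|_{\dot H^{1}}^\beta$. This follows from $\|u\|_p^p\le C\|u\|_2^{p-2}\|u\|_{\dot H^{\gamma}}^{2}$ with $\gamma=N(p-2)/4\in(s,1)$, which holds by the fractional Gagliardo--Nirenberg inequality with the $L^2$ exponent $p-2$, together with the Fourier interpolation $\|u\|_{\dot H^\gamma}^2\le\|u\|_{\dot H^s}^{\alpha}\|u\|_{\dot H^1}^{\beta}$ via H\"older, using $|\xi|^{2\gamma}=(|\xi|^{2s})^{\alpha/2}(|\xi|^2)^{\beta/2}$.

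For the optimizer, I maximize the Weinstein functional
\[
W(u)=\frac{\|u\|_p^p}{\|u\|_{D^{s,2}}^{\alpha}\|u\|_{D^{1,2}}^{\beta}\|u\|_2^{p-2}}
\]
over $H^1\setminus\{0\}$. $W$ is invariant under $u\mapsto \mu u(\lambda\cdot)$, and here the invariance group is only two-parameter while there are three norms. Using symmetric decreasing rearrangement (P\'olya--Szeg\H{o} for both the gradient and the Gagliardo seminorm), I restrict to nonnegative radial maximizing sequences. I then normalize $\|u_n\|_2=1$ and $\|u_n\|_{D^{1,2}}=1$, so that $\|u_n\|_{D^{s,2}}$ is bounded by interpolation. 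The \emph{main obstacle} is that $\|u_n\|_{D^{s,2}}$ is not fixed: it could tend to $0$, or concentrate in a way that loses compactness. I handle this with Strauss compactness: $H^1_{rad}\hookrightarrow L^p$ is compact for $2<p<2^*$, so $u_n\to Q$ in $L^p$. If $W(u_n)\to C_{1,s,p}>0$, then $\|u_n\|_p^p\ge \frac12 C_{1,s,p}\|u_n\|_{D^{s,2}}^\alpha$. Moreover $\|u_n\|_p$ is bounded below, because otherwise $\|u_n\|_{D^{s,2}}\to0$ would force, via interpolation between $L^2$ and $\dot H^s$ with $\dot H^1$ bounded, $\|u_n\|_p\to0$ at a controlled rate, contradicting the ratio bound. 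This must be checked with the exponents: the interpolation $\|u\|_p^p\lesssim \|u\|_{\dot H^s}^{\alpha'}\cdots$ with a larger $\dot H^s$ power than $\alpha$ would give the contradiction. Hence $Q\ne0$, and by weak lower semicontinuity of all three norms $W(Q)\ge C_{1,s,p}$, so $Q$ is an optimizer.

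Finally, I compute the Euler--Lagrange equation $\frac{d}{d\varepsilon}W(Q+\varepsilon\varphi)|_{\varepsilon=0}=0$. This gives
\[
a(-\Delta)^sQ+b(-\Delta)Q+\omega' Q=|Q|^{p-2}Q
\]
with positive constants $a,b,\omega'$ expressed through the norms of $Q$. Here $\omega'=\frac{(p-2)\|Q\|_p^p}{2\|Q\|_2^2}>0$. A rescaling $Q\mapsto \mu Q(\lambda\cdot)$ adjusts two constants, and since the optimizer is defined up to this action, one can choose $\lambda,\mu$ to make $a=b=1$: the coefficients scale as $\lambda^{2s}$ and $\lambda^2$ relative to the nonlinearity scaling by $\mu^{p-2}$. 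The resulting $\omega>0$, and $Q\ge0$ is radial.
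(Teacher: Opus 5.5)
Your proposal is correct. For the existence part it follows the paper's proof of Lemma 4.2 step for step: symmetric decreasing rearrangement, the normalization $\|u_n\|_{L^2}=\|u_n\|_{D^{1,2}}=1$, compactness of $H^1_{rad}\hookrightarrow L^p$, non-vanishing via an interpolation bound whose $D^{s,2}$-exponent exceeds $\alpha=\frac{4-N(p-2)}{2(1-s)}$, weak lower semicontinuity, the Euler--Lagrange equation, and the rescaling $\mu Q(\lambda\cdot)$ (two parameters suffice because $s\neq 1$).

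The genuine difference is in how you prove the inequality. The paper applies H\"older in physical space between $L^{2+4s/N}$ and $L^{2+4/N}$ and then uses the two mass-critical inequalities \eqref{2.1} and \eqref{2.2}. You instead use the single fractional Gagliardo--Nirenberg bound $\|u\|_{L^p}^p\lesssim\|u\|_{L^2}^{p-2}\|u\|_{\dot H^{\gamma}}^2$ with $\gamma=\frac{N(p-2)}{4}\in(s,1)$, followed by H\"older on the Fourier side, using $\alpha+\beta=2$ and $s\alpha+\beta=2\gamma$. Your route makes the exponent bookkeeping transparent; the paper's needs only the classical endpoint inequalities. Both are valid. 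Your first paragraph, with the factor $\|u\|_{\dot H^\sigma}^{2\sigma}$, is wrong, but you discard it.

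The step you leave as ``must be checked'' does close, and it is best done as you hint, keeping $\|u_n\|_{D^{1,2}}=1$ in play. Let $\tau=N(\frac12-\frac1p)<1$; Sobolev gives $\|u\|_{L^p}^p\lesssim\|u\|_{\dot H^{\tau}}^p$. There are two cases.
\begin{itemize}
\item If $\tau\le s$, interpolate $\dot H^\tau$ between $L^2$ and $\dot H^s$. This gives $D^{s,2}$-exponent $\frac{N(p-2)}{2s}$, which exceeds $\alpha$ precisely because $p>2+\frac{4s}{N}$.
\item If $\tau>s$, interpolate between $\dot H^s$ and $\dot H^1$. This gives exponent $\frac{2p-N(p-2)}{2(1-s)}=\alpha+\frac{p-2}{1-s}>\alpha$.
\end{itemize}
Either way, $\|u_n\|_{D^{s,2}}\to0$ would force $\|u_n\|_{L^p}^p\,\|u_n\|_{D^{s,2}}^{-\alpha}\to 0$, contradicting convergence of the ratio to $C_{1,s,p}>0$.

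This case split is not cosmetic. The paper's step \eqref{Nonvanishing} uses \eqref{2.2} with $q=p$, which requires $p<\frac{2N}{N-2s}$. That condition fails on part of the range $(2+\frac{4s}{N},2+\frac4N)$ whenever $s<\frac{N}{N+2}$, so your three-norm version is the more robust one.

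Finally, a small slip: testing your Euler--Lagrange equation against $Q$ forces $\omega'=\frac{(p-2)\|Q\|_{L^p}^p}{p\,\|Q\|_{L^2}^2}$, with $p$ rather than $2$ in the denominator. Only its positivity matters, so the conclusion is unaffected.
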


By virtue of Theorem \ref{Theorem1.2},
we prove the explicit formula of the best constant $C_{1,s,p}$.
Then we show the explicit formula of the critical mass $c_{0}$ via the best constant $C_{1,s,p}$.
Furthermore,
we prove that $Q$ is an energy ground state solution with critical mass $c_{0}$.
\begin{theorem}\label{Theorem1.3}
Let $N\geqslant3$,
$s\in(0,1)$
and $p\in(2+\frac{4s}{N},2+\frac{4}{N})$.
Then we have the following results.

\begin{enumerate}
\item
Let $c_{0}$ be the critical mass.
Then
\begin{equation*}
\begin{aligned}
C_{1,s,p}
=\left[
\frac{2N+4-pN}{2p(1-s)}
\right]^{-\frac{2N+4-pN}{4(1-s)}}
\left[
\frac{pN-2N-4s}{2p(1-s)}
\right]^{-\frac{pN-2N-4s}{4(1-s)}}
\frac{1}{c_{0}^{\frac{p-2}{2}}},
\end{aligned}
\end{equation*}
meaning that:
\begin{equation*}
\begin{aligned}
c_{0}:=C_{1,s,p}^{-\frac{2}{p-2}}
\left[
\frac{2N+4-pN}{2p(1-s)}
\right]^{-\frac{2N+4-pN}{2(1-s)(p-2)}}
\left[
\frac{pN-2N-4s}{2p(1-s)}
\right]^{-\frac{pN-2N-4s}{2(1-s)(p-2)}}.
\end{aligned}
\end{equation*}

\item
We have
\begin{equation*}
\begin{aligned}
J(Q)=0,
\end{aligned}
\end{equation*}
and
\begin{equation*}
\begin{aligned}
\int_{\mathbb{R}^{N}}
|Q|^{2}
\mathrm{d}x=c_{0}.
\end{aligned}
\end{equation*}
Moreover,
$Q$ is an energy ground state solution with critical mass $c_{0}$.
\end{enumerate}

\end{theorem}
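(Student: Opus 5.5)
Throughout, write $A(u)=\|u\|_{D^{1,2}}^{2}$, $B(u)=\|u\|_{D^{s,2}}^{2}$, $D(u)=\|u\|_{L^p}^p$, $c(u)=\|u\|_{L^2}^2$, and $\alpha=\frac{2N+4-pN}{2p(1-s)}$, $\beta=\frac{pN-2N-4s}{2p(1-s)}$. Two identities will be used: $\alpha+\beta=\frac{2(p-2)}{2p}\cdot\frac{N-Ns}{N(1-s)}\cdot\ldots$ (to be checked), and the exponents in \eqref{1.2}, which equal $2p\alpha/2=p\alpha$ on $B^{1/2}$ and $p\beta$ on $A^{1/2}$. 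Thus \eqref{1.2} reads
\begin{equation*}
D\leqslant C_{1,s,p}\,B^{p\alpha/2}A^{p\beta/2}c^{(p-2)/2}.
\end{equation*}

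\textbf{Step 1: compute the best constant and compare it with $c_0$.} I will use the two-parameter family $u_{\lambda,\mu}(x)=\mu u(\lambda x)$, which is natural because $A,B,D$ scale differently. For $u$ with $c(u)=c$, fixing the mass forces $\mu=\lambda^{N/2}$, so
\begin{equation*}
J(u_\lambda)=\tfrac12\lambda^{2}A+\tfrac12\lambda^{2s}B-\tfrac1p\lambda^{N(p-2)/2}D .
\end{equation*}
Since $2s<N(p-2)/2<2$, the quantity $J$ is nonnegative for all $\lambda>0$ if and only if
\begin{equation*}
\tfrac1p\lambda^{\gamma}D\leqslant \tfrac12(\lambda^2A+\lambda^{2s}B),\qquad \gamma=\tfrac{N(p-2)}{2}.
\end{equation*}
Minimizing $\lambda^{2-\gamma}A+\lambda^{2s-\gamma}B$ over $\lambda$ gives, through a weighted AM--GM computation, the value
\begin{equation*}
\min_\lambda=\kappa\,A^{\frac{\gamma-2s}{2(1-s)}}B^{\frac{2-\gamma}{2(1-s)}},
\end{equation*}
with an explicit constant $\kappa$ built from $\frac{2-\gamma}{2(1-s)}$ and $\frac{\gamma-2s}{2(1-s)}$. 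These exponents are exactly $p\beta/2$ and $p\alpha/2$, so the products $\alpha^{-\cdots}\beta^{-\cdots}$ in the stated formula come from $\kappa$.

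It follows that $m_c\geqslant0$ for every $u\in S_c$ if and only if $\sup_u D/(B^{p\alpha/2}A^{p\beta/2})\leqslant \frac{p\kappa}{2}$. Since $D/(\cdots)\leqslant C_{1,s,p}c^{(p-2)/2}$, with equality approached by the optimizer $Q$ rescaled to mass $c$, this holds exactly when $C_{1,s,p}c^{(p-2)/2}\leqslant p\kappa/2$.

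I also need the standard facts $m_c\leqslant0$ (by dilation $\lambda\to0$) and, from Theorem \ref{Theorem1.1} and its proof, $c_0=\inf\{c: m_c<0\}$ with $m_{c_0}=0$ attained. Combining these yields $C_{1,s,p}c_0^{(p-2)/2}=p\kappa/2$, which is part (1) once $\kappa$ is simplified. Here $p\kappa/2=\alpha^{-\cdots}\beta^{-\cdots}$ should follow from $\alpha+\beta=\frac{p-2}{p}\cdot\frac{?}{}$, so I will verify the AM--GM constant carefully.

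\textbf{Step 2: properties of $Q$.} The optimizer $Q$ from Theorem \ref{Theorem1.2} solves the Euler--Lagrange equation. Testing that equation with $Q$, and using the Pohozaev-type identity that comes from the scaling derivative $\frac{d}{d\lambda}J(Q_\lambda)|_{\lambda=1}=0$, relates $A$, $B$, $D$, and $c$. Because $Q$ is already scale-normalized, these relations give:
\begin{itemize}
\item $\lambda=1$ is the minimizer in Step 1;
\item equality holds in \eqref{1.2};
\item the free parameters in $Q$ (after fixing $\mu$ and $\lambda$ suitably) yield $c(Q)=c_0$ and $J(Q)=0=m_{c_0}$.
\end{itemize}
Hence $Q$ is a ground state at mass $c_0$.

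\textbf{Main obstacle.} The delicate point is the normalization. The optimizer is only defined up to the two-parameter scaling, and the equation with the stated coefficients pins it down. I expect the hardest part to be showing that the Nehari and Pohozaev identities force $\lambda=1$ and $c(Q)=c_0$ rather than merely proportionality. I would derive these identities explicitly and solve the resulting linear system for $A,B,D$ in terms of $c$ and $\omega$.
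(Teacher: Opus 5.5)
Your Step 1 is correct and reaches part (1) by a different route from the paper's Section 5.

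The paper proceeds as follows:
\begin{itemize}
\item it reads off $\|Q\|_{D^{1,2}}^2=\beta\|Q\|_{L^p}^p$ and $\|Q\|_{D^{s,2}}^2=\alpha\|Q\|_{L^p}^p$ from the explicit scaling \eqref{4.9} (Lemmas 5.1--5.2);
\item it inserts these into the equality case of \eqref{1.2} to get $C_{1,s,p}=\alpha^{-p\alpha/2}\beta^{-p\beta/2}\|Q\|_{L^2}^{-(p-2)}$;
\item it then argues that $\|Q\|_{L^2}^2=c_0$.
\end{itemize}
Your fibering argument is essentially the paper's Section 6. It characterizes $m_c<0$ as $C_{1,s,p}c^{(p-2)/2}>\frac{p\kappa}{2}$ using only the definition of the best constant, so no optimizer is needed. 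It also supplies exactly what the paper's final contradiction step lacks: $\|Q\|_{L^2}^2>c_0$ by itself only gives $C_{1,s,p}<C_{1,s,p}'$, which contradicts nothing.

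The identity you left open is $\alpha+\beta=\frac{2}{p}$. Set $a=2-\gamma$ and $b=\gamma-2s$. Then $\frac{a}{a+b}=\frac{p\alpha}{2}$ and $\frac{b}{a+b}=\frac{p\beta}{2}$, and the minimum of $\lambda^aA+\lambda^{-b}B$ is $(\frac{a}{a+b})^{-\frac{a}{a+b}}(\frac{b}{a+b})^{-\frac{b}{a+b}}A^{\frac{b}{a+b}}B^{\frac{a}{a+b}}$. Hence $\frac{p\kappa}{2}=\alpha^{-\frac{p\alpha}{2}}\beta^{-\frac{p\beta}{2}}$, as stated.

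Step 2, however, has a genuine gap.
\begin{itemize}
\item Nehari, $A+B+\omega c=D$, and Pohozaev, $A+sB=\frac{\gamma}{p}D$, are two equations in the three unknowns $A,B,D$. The linear system you plan to solve is therefore underdetermined.
\item More decisively, both identities hold for \emph{every} solution of \eqref{MLN}, for instance the minimizer at any mass $c>c_0$, whose energy is negative. So they cannot produce $J(Q)=0$ or $c(Q)=c_0$.
\item Your bullet ``$\lambda=1$ is the minimizer in Step 1'' means $B/A=\alpha/\beta$. This is not the Pohozaev identity, which only says $\lambda=1$ is critical for $\lambda\mapsto J(Q_\lambda)$, a different function.
\end{itemize}

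The ratio $B/A=\alpha/\beta$ is precisely the missing relation. It must come from $Q$ being an optimizer normalized so that both operators carry coefficient $1$. There are two ways to get it:
\begin{itemize}
\item From \eqref{4.9}: $\mu^{2s-2}=\frac{\alpha}{\beta}\|w\|_{D^{s,2}}^{-2}$ and $\|w\|_{D^{1,2}}=1$ give $B(Q)/A(Q)=\alpha/\beta$ (Lemma 5.1).
\item By comparison: the Euler--Lagrange equation of $W_p$ at $Q$ is $\frac{\beta D}{A}(-\Delta Q)+\frac{\alpha D}{B}(-\Delta)^sQ+\frac{(p-2)D}{pc}Q=|Q|^{p-2}Q$. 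Compare it with \eqref{MLN} in Fourier variables; a combination $a|\xi|^2+b|\xi|^{2s}+d$ vanishes on a set of positive measure only if $a=b=d=0$.
\end{itemize}

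Given the ratio, your Pohozaev identity yields $A=\beta D$, since $\beta+s\alpha=\frac{\gamma}{p}$, and then $B=\alpha D$. Hence $J(Q)=\frac12(\alpha+\beta)D-\frac1pD=0$. Inserting these into the equality $D=C_{1,s,p}c(Q)^{\frac{p-2}{2}}A^{\frac{p\beta}{2}}B^{\frac{p\alpha}{2}}$ gives $C_{1,s,p}c(Q)^{\frac{p-2}{2}}=\alpha^{-\frac{p\alpha}{2}}\beta^{-\frac{p\beta}{2}}$. By your Step 1 this equals $C_{1,s,p}c_0^{\frac{p-2}{2}}$, so $c(Q)=c_0$ and $Q$ attains $m_{c_0}=0$.

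Finally, $Q$ has no free parameters left, since it is fixed by \eqref{4.9}. So $c(Q)=c_0$ must be proved, not arranged by choosing $\lambda$ and $\mu$. If you rescaled to force $c=c_0$ and $J=0$, you would still have to show the rescaled function solves \eqref{MLN} with unit coefficients, which is the same coefficient comparison.
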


\begin{remark}
Combining Theorems \ref{Theorem1.2} and \ref{Theorem1.3},
we know that if $Q$ is an optimizer of inequality \eqref{1.2},
and also a weak solution of equation \eqref{MLN},
then $Q$ is an energy ground state solution with critical mass $c_{0}$.

We will first show that $u_{c_{0}}$ is an optimizer of inequality \eqref{1.2}.
\end{remark}

A preparatory step is to investigate the sufficient and necessary conditions for $m_{c}<0$ and $m_{c}=0$.
\begin{theorem}\label{Theorem1.4}
Let $N\geqslant3$, $s\in(0,1)$ and  $p\in(2+\frac{4s}{N},2+\frac{4}{N})$. Set
\begin{equation*}
\begin{aligned}
m_{c}:=\inf_{u\in S_{c}} J(u).
\end{aligned}
\end{equation*}
Then the following three conditions are equivalent
\begin{enumerate}
\item [$(a)$] $m_{c}<0$;

\item [$(b)$] $c>c_{0}$;

\item [$(c)$] there exists $u\in S_{c}$ such that
\begin{equation*}
\begin{aligned}
\|u\|_{D^{1,2}(\mathbb{R}^{N})}^{\frac{N(p-2)-4s}{2-2s}}
\|u\|_{D^{s,2}(\mathbb{R}^{N})}^{\frac{4-N(p-2)}{2-2s}}
<
\frac{N(p-2)-4s}{2p(1-s)}
\left(
\frac{4-N(p-2)}{N(p-2)-4s}
\right)^{\frac{4-N(p-2)}{4-4s}}
\int_{\mathbb{R}^{N}}
|u|^{p}
\mathrm{d}x.
\end{aligned}
\end{equation*}
\end{enumerate}

\noindent The following three conditions are equivalent
\begin{enumerate}
\item [$(a')$]
$m_{c}=0$;

\item [$(b')$]
$c\leqslant c_{0}$;

\item [$(c')$]
for all $u\in S_{c}$,
\begin{equation*}
\begin{aligned}
\|u\|_{D^{1,2}(\mathbb{R}^{N})}^{\frac{N(p-2)-4s}{2-2s}}
\|u\|_{D^{s,2}(\mathbb{R}^{N})}^{\frac{4-N(p-2)}{2-2s}}
\geqslant
\frac{N(p-2)-4s}{2p(1-s)}
\left(
\frac{4-N(p-2)}{N(p-2)-4s}
\right)^{\frac{4-N(p-2)}{4-4s}}
\int_{\mathbb{R}^{N}}
|u|^{p}
\mathrm{d}x.
\end{aligned}
\end{equation*}
\end{enumerate}
\end{theorem}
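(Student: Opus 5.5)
Notation: write $A=\|u\|_{D^{1,2}}^2$, $B=\|u\|_{D^{s,2}}^2$, $D=\int|u|^p$, $\alpha=\frac{N(p-2)-4s}{2(1-s)}$ and $\beta=\frac{4-N(p-2)}{2(1-s)}$. In the range $p\in(2+\frac{4s}{N},2+\frac{4}{N})$ we have $\alpha,\beta\in(0,1)$ with $\alpha+\beta=2$. The whole argument runs through the mass-preserving dilation $u_t(x)=t^{N/2}u(tx)$, which stays in $S_c$ and gives
\[
J(u_t)=\tfrac12 t^2A+\tfrac12 t^{2s}B-\tfrac1p t^{\frac{N(p-2)}{2}}D=:f(t).
\]
Since $2s<\frac{N(p-2)}{2}<2$, we have $f(t)>0$ for small $t$ and for large $t$, so negativity of $J$ along the fiber can only occur at intermediate $t$.

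First I show $m_c\le 0$ for every $c$. Take $u\in S_c$ and let $t\to0$; then $J(u_t)\to0$. Hence $m_c\leqslant0$ always, and $m_c<0$ and $m_c=0$ are complementary. So it suffices to prove $(a)\Leftrightarrow(b)\Leftrightarrow(c)$; the primed statements are then the negations.

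Next comes $(a)\Leftrightarrow(c)$. If $m_c<0$ there is $u\in S_c$ with $J(u)<0$. Conversely, $(c)$ should be exactly the condition $\inf_{t>0}f(t)<0$. To see this, write $\frac{N(p-2)}{2}=2\theta+2s(1-\theta)$ with $\theta=\alpha/2$. By weighted AM--GM,
\[
\tfrac12 t^2A+\tfrac12 t^{2s}B\geqslant K\,(t^2A)^{\theta}(t^{2s}B)^{1-\theta}=K\,t^{\frac{N(p-2)}{2}}A^{\alpha/2}B^{\beta/2}.
\]
The constant $K$ and the equality case come from optimizing over the ratio of the two terms. Equality is attained at a suitable $t$, because the ratio $t^{2-2s}A/B$ sweeps $(0,\infty)$. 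Therefore $\inf_t f<0$ iff $K A^{\alpha/2}B^{\beta/2}<D/p$. I would then compute $K$ explicitly and check that it reproduces the constant $\frac{N(p-2)-4s}{2p(1-s)}\big(\frac{4-N(p-2)}{N(p-2)-4s}\big)^{\frac{4-N(p-2)}{4-4s}}$ in $(c)$. Note that the exponents in $(c)$ are $\alpha$ and $\beta$ applied to norms, not squared norms, so I must keep track of the powers carefully. This computation is the error-prone step and the main obstacle: matching the exact constant.

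Finally, $(b)\Leftrightarrow(c)$ via Theorem \ref{Theorem1.2} and Theorem \ref{Theorem1.3}. Suppose $(c)$ fails for all $u\in S_c$. Then for every $u$ we have $D\leqslant K' \|u\|_{D^{1,2}}^{\alpha}\|u\|_{D^{s,2}}^{\beta}$, where $K'$ is the reciprocal of the constant in $(c)$. Compare this with the sharp inequality \eqref{1.2}, in which the factor $c^{(p-2)/2}$ appears. By the definition of the best constant and homogeneity in $c$, failure of $(c)$ on all of $S_c$ is equivalent to $C_{1,s,p}c^{\frac{p-2}{2}}\leqslant K'$. By the formula in Theorem \ref{Theorem1.3}(1), this is exactly $c\leqslant c_0$. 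Attainment of the optimizer $Q$ is what makes the boundary case $c=c_0$ fall on the side $(c')$. This confirms that for $c=c_0$ strict inequality never holds.

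Conversely, for $c>c_0$ I take the rescaled optimizer $\lambda Q$, adjusted to mass $c$. Equality in \eqref{1.2} then gives strict inequality $(c)$, since $c^{(p-2)/2}>c_0^{(p-2)/2}$. This closes the chain; if Theorem \ref{Theorem1.3} is itself proven using this result, I would instead define $c_0$ by the displayed formula and run the same argument.
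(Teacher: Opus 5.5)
Your proposal is correct and follows essentially the same route as the paper. You get (a)$\Leftrightarrow$(c) by optimizing $J$ along the mass-preserving dilation $u_t(x)=t^{N/2}u(tx)$, as in Lemma \ref{Lemma6.1}, where the paper minimizes $h_u(t)=J(u_t)/t^{2s}$ by calculus instead of weighted AM--GM; your constant $K=\frac12\theta^{-\theta}(1-\theta)^{-(1-\theta)}$ with $\theta=\alpha/2$ does give $1/(pK)$ equal to the constant in (c). You get (b)$\Leftrightarrow$(c) from the sharp inequality \eqref{1.2} restricted to $S_c$, the rescaled optimizer $Q$, and the formula for $c_0$ from Theorem \ref{Theorem1.3}, as in Lemmas \ref{Lemma6.2}--\ref{Lemma6.5}.

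Minor points: $\alpha,\beta$ lie in $(0,2)$, not $(0,1)$. Attainment of $Q$ plays no role at $c=c_0$, since the non-strict sharp inequality alone gives (c$'$) there. Your $t\to0$ argument for $m_c\leqslant0$ usefully replaces the paper's appeal to Luo--Hajaiej in Lemma \ref{Lemma6.5}.
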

Using Theorem \ref{Theorem1.4} we can state that $u_{c_{0}}$ is an optimizer of inequality \eqref{1.2}.
\begin{theorem}\label{Theorem1.5}
Let $N\geqslant3$,
$s\in(0,1)$
and  $p\in(2+\frac{4s}{N},2+\frac{4}{N})$.
Let $u_{c_{0}}\in H^{1}(\mathbb{R}^{N})$ be an energy ground state solution for equation \eqref{MLN} with $c=c_{0}$.
Then
\begin{equation*}
\begin{aligned}
\|u_{c_{0}}\|_{D^{1,2}(\mathbb{R}^{N})}^{\frac{N(p-2)-4s}{2-2s}}
\|u_{c_{0}}\|_{D^{s,2}(\mathbb{R}^{N})}^{\frac{4-N(p-2)}{2-2s}}
=C_{1,s,p}^{-1}\|u_{c_{0}}\|_{L^{2}(\mathbb{R}^{N})}^{-(p-2)}
\int_{\mathbb{R}^{N}}
|u_{c_{0}}|^{p}
\mathrm{d}x.
\end{aligned}
\end{equation*}
Moreover,
we know that $u_{c_{0}}$ is an optimizer of inequality \eqref{1.2}.
\end{theorem}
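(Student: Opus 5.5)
Here $\|u\|_{D^{1,2}}^2=\int|\nabla u|^2$ and $\|u\|_{D^{s,2}}^2$ is the Gagliardo seminorm appearing in $J$. Write $A=\|u\|_{D^{1,2}}^2$, $B=\|u\|_{D^{s,2}}^2$, and set
\[
\theta_1=\frac{N(p-2)-4s}{2-2s},\qquad \theta_2=\frac{4-N(p-2)}{2-2s},\qquad \theta_1+\theta_2=2 .
\]
Let $K$ denote the constant appearing in $(c)$ and $(c')$ of Theorem \ref{Theorem1.4}. The plan is to show that $m_{c_0}=0$ and that $u_{c_0}$ realizes equality in $(c')$. Then we check, using the formula of Theorem \ref{Theorem1.3}(1), that equality in $(c')$ at mass $c_0$ is exactly equality in \eqref{1.2}.

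\textbf{Step 1: $m_{c_0}=0$ is attained by $u_{c_0}$.} By Theorem \ref{Theorem1.4}, $c_0\le c_0$ gives $m_{c_0}=0$. Since $u_{c_0}$ is an energy ground state, $J(u_{c_0})=0$.

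\textbf{Step 2: equality in $(c')$.} Consider the scaling $u_\lambda(x)=\lambda^{N/2}u_{c_0}(\lambda x)$, which preserves the $L^2$ norm. Along it,
\[
f(\lambda)=J(u_\lambda)=\tfrac12\lambda^2A+\tfrac12\lambda^{2s}B-\tfrac1p\lambda^{N(p-2)/2}D,\qquad D=\int|u_{c_0}|^p .
\]
Since $m_{c_0}=0$, we have $f\ge0$ on $(0,\infty)$ with $f(1)=0$. Hence $\lambda=1$ is an interior minimum, so $f'(1)=0$, which gives the Pohozaev-type identity
\[
A+sB=\tfrac{N(p-2)}{2p}D .
\]
Combining this with $f(1)=0$, i.e. $A+B=\tfrac2pD$, we solve explicitly for $A$ and $B$ as multiples of $D$:
\[
A=\frac{N(p-2)-4s}{2p(1-s)}D,\qquad B=\frac{4-N(p-2)}{2p(1-s)}D .
\]
Then $A^{\theta_1/2}B^{\theta_2/2}$ is an explicit constant times $D$, and a short algebraic check should show that this constant equals $K$. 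This is how $K$ arises in Theorem \ref{Theorem1.4}: it comes from minimizing $\tfrac12\lambda^2A+\tfrac12\lambda^{2s}B-\tfrac1p\lambda^{N(p-2)/2}D$ over $\lambda$, and the borderline case $\min=0$ gives precisely these relations. Thus $u_{c_0}$ attains equality in $(c')$.

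\textbf{Step 3: match the constants.} Evaluate \eqref{1.2}, rewritten as $\|u\|_{D^{1,2}}^{\theta_1}\|u\|_{D^{s,2}}^{\theta_2}\ge C_{1,s,p}^{-1}\|u\|_{L^2}^{-(p-2)}\int|u|^p$, on $S_{c_0}$. The claim to verify is
\[
C_{1,s,p}^{-1}c_0^{-(p-2)/2}=K .
\]
This follows by substituting the formula of Theorem \ref{Theorem1.3}(1):
\[
C_{1,s,p}^{-1}c_0^{-(p-2)/2}=\Big[\tfrac{2N+4-pN}{2p(1-s)}\Big]^{\theta_2/2}\Big[\tfrac{pN-2N-4s}{2p(1-s)}\Big]^{\theta_1/2}.
\]
This is the same constant obtained in Step 2, namely $(A/D)^{\theta_1/2}(B/D)^{\theta_2/2}$, which equals $K$ after rewriting.

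\textbf{Conclusion.} Step 2 then gives the displayed identity for $u_{c_0}$. Since \eqref{1.2} holds with $C_{1,s,p}$ as its best constant, equality means $u_{c_0}$ is an optimizer.

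The main obstacle is Step 3 together with the end of Step 2: verifying that the exponent bookkeeping produces exactly $K$ and the same expression as Theorem \ref{Theorem1.3}. The only analytic input is the variational identity $f'(1)=0$, which is legitimate because $\lambda=1$ is an interior minimum of the smooth function $f$.
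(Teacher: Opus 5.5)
Your proposal is correct and follows the paper's proof essentially step by step. The paper also starts from $m_{c_0}=0=J(u_{c_0})$ and uses the $L^2$-preserving dilation $t^{N/2}u_{c_0}(tx)$ to get $\|u_{c_0}\|_{D^{1,2}}^2=\frac{N(p-2)-4s}{2p(1-s)}\int|u_{c_0}|^p$, then $\|u_{c_0}\|_{D^{s,2}}^2=\frac{4-N(p-2)}{2p(1-s)}\int|u_{c_0}|^p$ from $J(u_{c_0})=0$, and finally identifies the constant with $C_{1,s,p}^{-1}c_0^{-(p-2)/2}$ via the explicit formula for $c_0$. The only cosmetic difference is that the paper reads off $t_{u_{c_0}}=1$ as the unique critical point of $h_u(t)=J(u_t)/t^{2s}$ from Lemma~\ref{Lemma6.1}, whereas you differentiate $J(u_\lambda)$ at $\lambda=1$ directly; this is equivalent because $J(u_{c_0})=0$, and your detour through $K$ is harmless but not needed.
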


We point out that our method applies to equation \eqref{MF}.
We obtain the explicit formula of the critical mass $\bar{c}_{0}$,
and the relation between the energy ground state solution and the optimizer of the Gagliardo-Nirenberg inequality with mixed fractional operators as follows
\begin{equation}\label{1.3}
\begin{aligned}
\|u\|_{L^{p}(\mathbb{R}^{N})}^{p}
\leqslant C_{s_{2},s_{1},p}
\|u\|_{D^{s_{2},2}(\mathbb{R}^{N})}^{\frac{4-N(p-2)}{2(1-s)}}
\|u\|_{D^{s_{1},2}(\mathbb{R}^{N})}^{\frac{N(p-2)-4s}{2(1-s)}}
\|u\|_{L^{2}(\mathbb{R}^{N})}^{p-2},
\end{aligned}
\end{equation}
where $u\in H^{s_{2}}(\mathbb{R}^{N})$
and $C_{s_{2},s_{1},p}>0$ is the best constant of inequality \eqref{1.3}.

For the convenience of the readers,
we state the result as follows.
\begin{theorem}\label{Theorem1.6}
Let $N\geqslant1$,
$0<s_{1}<s_{2}<1$
and $p\in(2+\frac{4s_{1}}{N},2+\frac{4s_{2}}{N})$.
Then the following results hold.
\begin{enumerate}
\item
There exists a non-negative radial optimizer $\bar{Q}\in H^{s_{2}}(\mathbb{R}^{N})$ for inequality \eqref{1.3}, and it is the weak solution of
\begin{equation*}
\begin{aligned}
(-\Delta)^{s_{2}} \bar{Q}
+(-\Delta)^{s_{1}}\bar{Q}+\omega \bar{Q}=|\bar{Q}|^{p-2}\bar{Q}, \ \ x\in \mathbb{R}^{N},
\end{aligned}
\end{equation*}
with $\omega>0$.
\item
Let $\bar{c}_{0}$ be the critical mass in Theorem A.
Then
\begin{equation*}
\begin{aligned}
C_{s_{2},s_{1},p}
=
\left[
\frac{2N+4s_{2}-pN}{2p(s_{2}-s_{1})}
\right]^{-\frac{2N+4s_{2}-pN}{4(s_{2}-s_{1})}}
\left[
\frac{pN-2N-4s_{1}}{2p(s_{2}-s_{1})}
\right]^{-\frac{pN-2N+4s_{1}}{4(s_{2}-s_{1})}}
\frac{1}{\bar{c}_{0}^{\frac{p-2}{2}}}
\end{aligned}
\end{equation*}
and
\begin{equation*}
\begin{aligned}
\bar{c}_{0}
=
\left[
\frac{2N+4s_{2}-pN}{2p(s_{2}-s_{1})}
\right]^{\frac{2N+4s_{2}-pN}{2(s_{2}-s_{1})(p-2)}}
\left[
\frac{pN-2N-4s_{1}}{2p(s_{2}-s_{1})}
\right]^{\frac{pN-2N+4s_{1}}{2(s_{2}-s_{1})(p-2)}}
C_{s_{2},s_{1},p}^{-\frac{2}{p-2}}
\end{aligned}
\end{equation*}
\item
Let $u_{\bar{c}_{0}}\in H^{s_{2}}(\mathbb{R}^{N})$ be an energy ground state solution for equation \eqref{MF} with $c=\bar{c}_{0}$.
Then
\begin{equation*}
\begin{aligned}
\|u_{\bar{c}_{0}}\|_{D^{s_{2},2}(\mathbb{R}^{N})}^{\frac{4-N(p-2)}{2(1-s)}}
\|u_{\bar{c}_{0}}\|_{D^{s_{1},2}(\mathbb{R}^{N})}^{\frac{N(p-2)-4s}{2(1-s)}}
=C_{s_{2},s_{1},p}^{-1}\|u_{\bar{c}_{0}}\|_{L^{2}(\mathbb{R}^{N})}^{-(p-2)}
\int_{\mathbb{R}^{N}}
|u_{\bar{c}_{0}}|^{p}
\mathrm{d}x.
\end{aligned}
\end{equation*}
Moreover,
we know that $u_{\bar{c}_{0}}$ is an optimizer of inequality \eqref{1.3}.
\end{enumerate}
\end{theorem}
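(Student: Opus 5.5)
The plan is to transfer the scheme for \eqref{MLN} verbatim to the mixed fractional setting. Here $(-\Delta)^{s_{2}}$ plays the role of $-\Delta$ and $(-\Delta)^{s_{1}}$ plays the role of $(-\Delta)^{s}$. Set
\[
a:=\frac{4s_{2}-N(p-2)}{2(s_{2}-s_{1})},\qquad b:=\frac{N(p-2)-4s_{1}}{2(s_{2}-s_{1})},
\]
so that $a+b=2$, and $a,b>0$ exactly because $p\in(2+\frac{4s_{1}}{N},2+\frac{4s_{2}}{N})$. (In \eqref{1.3}, the exponents written with $1-s$ and $s$ are to be read this way.)

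\textbf{Step 1: inequality \eqref{1.3} and part (1).} I first obtain \eqref{1.3} by interpolation:
\[
\|u\|_{p}^{p}\lesssim \|u\|_{\dot H^{\theta}}^{2}\|u\|_{2}^{p-2},\qquad \theta=\frac{N(p-2)}{2p}\cdot p/2\cdot\frac{2}{p}\ \text{(the usual fractional Gagliardo--Nirenberg exponent)}.
\]
Then I interpolate $\|u\|_{\dot H^{\theta}}$ between $\dot H^{s_{1}}$ and $\dot H^{s_{2}}$ via H\"older's inequality in Fourier variables.

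For the optimizer I consider the Weinstein functional
\[
W(u)=\frac{\|u\|_{D^{s_{2},2}}^{a}\,\|u\|_{D^{s_{1},2}}^{b}\,\|u\|_{2}^{p-2}}{\|u\|_{p}^{p}}.
\]
It is invariant under $u\mapsto \mu u(\lambda\,\cdot)$, and this two-parameter scaling lets me normalize
\[
\|u\|_{2}=\|u\|_{D^{s_{2},2}}=1 .
\]
Note that $\|u\|_{D^{s_{1},2}}$ is still bounded by interpolation. Replacing $u$ by $|u|^{*}$ does not increase $W$, by the fractional P\'olya--Szeg\H{o} inequality and $\big||u|\big|\le$ pointwise. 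A minimizing sequence is therefore radial, nonnegative and bounded in $H^{s_{2}}$. Strauss compactness in $L^{p}$ for radial functions, together with weak lower semicontinuity of the norms, gives a minimizer $\bar Q\neq0$.

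The Euler--Lagrange equation of $W$ has the form
\[
\alpha(-\Delta)^{s_{2}}\bar Q+\beta(-\Delta)^{s_{1}}\bar Q+\gamma\bar Q=\delta\,\bar Q^{p-1},
\]
with all coefficients positive. A final rescaling $\mu\bar Q(\lambda\,\cdot)$, which keeps $\bar Q$ optimal, makes the two operator coefficients equal and the nonlinear coefficient $1$, leaving $\omega>0$.

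\textbf{Step 2: the constant, part (2).} For $u\in S_{c}$, put $A=\|u\|_{D^{s_{2},2}}^{2}$ and $B=\|u\|_{D^{s_{1},2}}^{2}$, and use the scaling $u_{t}=t^{N/2}u(t\,\cdot)$, which stays in $S_{c}$. Then
\[
J(u_{t})=\tfrac12 t^{2s_{2}}A+\tfrac12 t^{2s_{1}}B-\tfrac1p t^{N(p-2)/2}\|u\|_{p}^{p}.
\]
The analogue of Theorem~\ref{Theorem1.4} follows from an elementary minimization in $t$. Since $2s_{1}<N(p-2)/2<2s_{2}$, one has $\inf_{t}J(u_{t})<0$ if and only if
\[
\|u\|_{p}^{p}>K\,A^{a/2}B^{b/2},\qquad K=\Big(\frac{2N+4s_{2}-pN}{2p(s_{2}-s_{1})}\Big)^{-a/2}\Big(\frac{pN-2N-4s_{1}}{2p(s_{2}-s_{1})}\Big)^{-b/2}.
\]
This comes from weighted AM--GM, $\theta_{1}x+\theta_{2}y\ge x^{\theta_{1}}y^{\theta_{2}}$. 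Hence
\[
m_{c}<0\iff \sup_{S_{c}}\frac{\|u\|_{p}^{p}}{A^{a/2}B^{b/2}}>K\iff C_{s_{2},s_{1},p}\,c^{\frac{p-2}{2}}>K .
\]
Theorem A(3) identifies $\bar c_{0}$ as the threshold where $m_{c}$ becomes negative, since $m_{c}\le0$ always by dilation. This yields
\[
C_{s_{2},s_{1},p}\,\bar c_{0}^{\frac{p-2}{2}}=K,
\]
which is exactly the stated formula; the exponents are to be read as $a/2,b/2$. Equivalently, $\bar c_{0}=(K/C_{s_{2},s_{1},p})^{2/(p-2)}$.

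\textbf{Step 3: part (3).} Let $u=u_{\bar c_{0}}$. Then $J(u)=m_{\bar c_{0}}=0$, and $u$ minimizes $J$ over $S_{\bar c_{0}}$, so $t=1$ minimizes $J(u_{t})$. This gives $\frac{d}{dt}J(u_{t})|_{t=1}=0$ and $J(u)=0$. Solving these two linear relations expresses $A$ and $B$ through $\|u\|_{p}^{p}$, and equality holds in the AM--GM step. So $\|u\|_{p}^{p}=KA^{a/2}B^{b/2}$, and with $\|u\|_{2}^{2}=\bar c_{0}$, Step 2 turns this into equality in \eqref{1.3}.

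The main obstacle is Step 2's identification. It needs $m_{c}=0$ for $c\le\bar c_{0}$ and $m_{c}<0$ for $c>\bar c_{0}$, with the threshold exactly $\bar c_{0}$ from Theorem A. I will argue this using that $m_{c}<0$ forces existence of a minimizer (Theorem A), while at $m_{c}=0$ existence requires equality in AM--GM. Radial compactness in Step 1 is standard.
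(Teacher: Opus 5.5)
Your argument has a concrete error that runs through all three steps: the exponents are attached to the wrong seminorms. Following the literal form of \eqref{1.3} and of part (3), you put $a=\frac{4s_2-N(p-2)}{2(s_2-s_1)}$ on $\|u\|_{D^{s_2,2}}$ and $b=\frac{N(p-2)-4s_1}{2(s_2-s_1)}$ on $\|u\|_{D^{s_1,2}}$.

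\textbf{Step 1.} With your pairing, $W$ is not dilation invariant. One gets $W(u(\lambda\,\cdot))=\lambda^{s_2a+s_1b-\frac{N(p-2)}{2}}W(u)=\lambda^{2(s_1+s_2)-N(p-2)}W(u)$. So unless $p=2+\frac{2(s_1+s_2)}{N}$, $\inf W=0$, the inequality you want to optimize is false, and the two-parameter normalization is unavailable.

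\textbf{Step 2.} Minimizing $\frac12t^{2s_2}A+\frac12t^{2s_1}B-\frac1pt^{N(p-2)/2}\|u\|_p^p$ by weighted AM--GM puts weight $\frac{N(p-2)-4s_1}{4(s_2-s_1)}=\frac b2$ on $A$ and $\frac a2$ on $B$. The correct criterion is therefore $\|u\|_p^p>K\,A^{b/2}B^{a/2}$; your $K$ itself is right.

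\textbf{Step 3.} The two relations give $A=\frac bp\|u\|_p^p$ and $B=\frac ap\|u\|_p^p$. Hence $A^{b/2}B^{a/2}=K^{-1}\|u\|_p^p$, whereas $A^{a/2}B^{b/2}\neq K^{-1}\|u\|_p^p$ unless $a=b$.

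Your own Fourier--H\"older interpolation already gives the right pairing, once the exponent is taken as $\theta=\frac{N(p-2)}{4}\in(s_1,s_2)$ rather than $\frac{N(p-2)}{2p}$ (that is the Sobolev exponent). It yields $\|u\|_{\dot H^{\theta}}^2\lesssim\|u\|_{D^{s_1,2}}^{a}\|u\|_{D^{s_2,2}}^{b}$. This is the pairing of Lemma~\ref{Lemma8.1}, and the faithful analogue of \eqref{1.2} and Theorem~\ref{Theorem1.5}, where $D^{1,2}$ (the analogue of $D^{s_2,2}$) carries $b$. The displays in \eqref{1.3} and part (3) must be read that way. After the swap, your three steps go through with the same $K$.

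Two smaller points.

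\textbf{Nonvanishing in Step 1.} Radial compactness alone does not give $\bar Q\neq0$; you must exclude $\|w_n\|_p\to0$, which is what the paper's step \eqref{Nonvanishing} is for. With $\|w_n\|_2=\|w_n\|_{D^{s_2,2}}=1$ this can be done as follows. If $\|w_n\|_p\to0$, boundedness of $W(w_n)$ forces $\|w_n\|_{D^{s_1,2}}\to0$. On the other hand, $\|u\|_p\lesssim\|u\|_{\dot H^{N(\frac12-\frac1p)}}$, interpolated either between $L^2$ and $\dot H^{s_1}$ or between $\dot H^{s_1}$ and $\dot H^{s_2}$, gives $\|w_n\|_p^p\lesssim\|w_n\|_{D^{s_1,2}}^{\kappa}$ with $\kappa>a$. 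Then $W(w_n)\to\infty$, a contradiction. For $N=1$, use that $w_n$ is symmetric decreasing, not merely even.

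\textbf{Identification of the threshold in Step 2.} The implication ``$m_c<0$ implies a minimizer exists'' is in the proof of Theorem A, not its statement. You can bypass it. Take the optimizer, normalize it into $S_{c^*}$ with $c^*:=(K/C_{s_2,s_1,p})^{2/(p-2)}$, and dilate it to its $t_u$; it then has $J=0=m_{c^*}$. Combined with your AM--GM nonexistence argument for $c<c^*$, this gives $c^*=\bar c_0$.

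\textbf{Comparison with the paper.} Once repaired, your Step 2 is a genuinely different route. The paper reads off $C_{s_2,s_1,p}$ from the optimizer: the explicit rescaling $\bar Q=\lambda\bar w(\mu\,\cdot)$, then Nehari plus Pohozaev to get $J(\bar Q)=0$, then an argument that $\|\bar Q\|_2^2=\bar c_0$. You instead use the mechanism of Lemmas~\ref{Lemma6.1}--\ref{Lemma6.5} together with homogeneity, $\sup_{S_c}\|u\|_p^p/(A^{b/2}B^{a/2})=C_{s_2,s_1,p}\,c^{\frac{p-2}{2}}$. This needs neither the Pohozaev identity nor the exact mass of $\bar Q$, and so sidesteps the step where the paper has to pin that mass down. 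Your Step 3 coincides with the paper's proof of Theorem~\ref{Theorem1.5}.
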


\noindent
{\bf Structure of this paper}: In Section 2, we present some  preliminary results about Sobolev spaces.
In Section 3, we prove Theorem \ref{Theorem1.1}.
In Section 4,
we show Theorem \ref{Theorem1.2}.
In Section 5,
we prove Theorem \ref{Theorem1.3}.
In Section 6, we show Theorem \ref{Theorem1.4}.
In Section 7, we prove Theorem \ref{Theorem1.5}.
In Section 8, we give the proof of Theorem  \ref{Theorem1.6}.

\section{Sobolev Spaces}
Define the homogeneous Sobolev space
\begin{equation*}
\begin{aligned}
D^{1,2}(\mathbb{R}^{N})
=
\{u\in L^{\frac{2N}{N-2}}(\mathbb{R}^{N})|
|\nabla u|\in L^{2}(\mathbb{R}^{N})\},
\end{aligned}
\end{equation*}
its semi-norm is defined as
\begin{equation*}
\begin{aligned}
\|u\|_{D^{1,2}
(\mathbb{R}^{N})}^{2}
=\int_{\mathbb{R}^{N}}
|\nabla u|^{2}
\mathrm{d}x.
\end{aligned}
\end{equation*}
For $N\geqslant 3$ and $s\in(0,1)$, let $D^{s,2}(\mathbb{R}^{N})$ be the homogeneous fractional Sobolev space, which is the completion of $C_{0}^{\infty}(\mathbb{R}^{N})$ with the semi-norm
\begin{equation*}
\begin{aligned}
\|u\|_{D^{s,2}(\mathbb{R}^{N})}^{2}:=
\int_{\mathbb{R}^{N}}\int_{\mathbb{R}^{N}}\frac{|u(x)-u(y)|^{2}}{|x-y|^{N+2s}}\mathrm{d}x\mathrm{d}y.
\end{aligned}
\end{equation*}
Define the inhomogeneous Sobolev space by:
\begin{equation*}
\begin{aligned}
H^{1}(\mathbb{R}^{N})
=
\{u\in L^{2}(\mathbb{R}^{N})|
|\nabla u|\in L^{2}(\mathbb{R}^{N})\},
\end{aligned}
\end{equation*}
its norm is defined as
\begin{equation*}
\begin{aligned}
\|u\|_{H^{1}(\mathbb{R}^{N})}^{2}
=
\int_{\mathbb{R}^{N}}
|\nabla u|^{2}
\mathrm{d}x
+
\int_{\mathbb{R}^{N}}
|u|^{2}
\mathrm{d}x.
\end{aligned}
\end{equation*}
Define the inhomogeneous fractional Sobolev space by:
\begin{equation*}
\begin{aligned}
H^{s}(\mathbb{R}^{N})
=
\{u\in L^{2}(\mathbb{R}^{N})|
\|u\|_{D^{s,2}(\mathbb{R}^{N})}^{2}
<\infty\},
\end{aligned}
\end{equation*}
its norm is defined as
\begin{equation*}
\begin{aligned}
\|u\|_{H^{s}(\mathbb{R}^{N})}^{2}
=
\int_{\mathbb{R}^{N}}\int_{\mathbb{R}^{N}}\frac{|u(x)-u(y)|^{2}}{|x-y|^{N+2s}}\mathrm{d}x\mathrm{d}y
+
\int_{\mathbb{R}^{N}}
|u|^{2}
\mathrm{d}x.
\end{aligned}
\end{equation*}

\begin{lemma}[Continuous embedded]\label{Lemma2.1}
$H^{1}(\mathbb{R}^{N})
\hookrightarrow
D^{s,2}(\mathbb{R}^{N})$.
\end{lemma}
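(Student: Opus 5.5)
The plan is to prove the embedding $H^{1}(\mathbb{R}^{N})\hookrightarrow D^{s,2}(\mathbb{R}^{N})$ via the Fourier representation of the Gagliardo seminorm. It suffices to show that there is $C=C(N,s)>0$ with
\begin{equation*}
\|u\|_{D^{s,2}(\mathbb{R}^{N})}^{2}\leqslant C\|u\|_{H^{1}(\mathbb{R}^{N})}^{2}
\quad\text{for all } u\in H^{1}(\mathbb{R}^{N}),
\end{equation*}
together with the observation that every $u\in H^{1}(\mathbb{R}^{N})$ actually belongs to $D^{s,2}(\mathbb{R}^{N})$, defined as a completion of $C_{0}^{\infty}(\mathbb{R}^{N})$.

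First, I will use the standard identity from \cite{Nezza-Palatucci-Valdinoci2012BSM}, valid for $u\in L^{2}(\mathbb{R}^{N})$:
\begin{equation*}
\int_{\mathbb{R}^{N}}\int_{\mathbb{R}^{N}}\frac{|u(x)-u(y)|^{2}}{|x-y|^{N+2s}}\mathrm{d}x\mathrm{d}y
=C(N,s)\int_{\mathbb{R}^{N}}|\xi|^{2s}|\mathcal{F}(u)(\xi)|^{2}\mathrm{d}\xi .
\end{equation*}
This identity follows from Plancherel after the substitution $y=x+h$, since
\[
\int|u(x+h)-u(x)|^{2}\,\mathrm{d}x=\int|e^{i\xi\cdot h}-1|^{2}|\mathcal{F}(u)(\xi)|^{2}\,\mathrm{d}\xi,
\]
and $\int|e^{i\xi\cdot h}-1|^{2}|h|^{-N-2s}\,\mathrm{d}h=C(N,s)|\xi|^{2s}$ by rotation and scaling invariance. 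The constant is finite because $s\in(0,1)$: the integrand is $O(|h|^{2-N-2s})$ near $0$ and $O(|h|^{-N-2s})$ at infinity.

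Second, I will use the elementary bound $|\xi|^{2s}\leqslant 1+|\xi|^{2}$, which holds in both regimes $|\xi|\le1$ and $|\xi|\ge1$. Combined with Plancherel, it gives
\[
\|u\|_{D^{s,2}}^{2}\leqslant C(N,s)\bigl(\|u\|_{L^{2}}^{2}+\|\nabla u\|_{L^{2}}^{2}\bigr)=C(N,s)\|u\|_{H^{1}}^{2}.
\]

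Finally, for membership in the completion, I will approximate $u\in H^{1}(\mathbb{R}^{N})$ by $u_{n}\in C_{0}^{\infty}(\mathbb{R}^{N})$ in $H^{1}$. By the inequality just proved, $(u_{n})$ is Cauchy in the $D^{s,2}$ seminorm. Its limit in $D^{s,2}(\mathbb{R}^{N})$ is identified with $u$ because $u_{n}\to u$ in $L^{2}$, and hence also in $L^{2N/(N-2s)}$ by the fractional Sobolev inequality, using $N\geqslant3>2s$. The only delicate step is the Fourier identity; the constant $C(N,s)$ is explicitly computable but its value is not needed.
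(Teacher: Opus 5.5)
Your proof is correct and uses the same key ingredient as the paper, the Fourier representation of the Gagliardo seminorm. The only real difference is how you control the multiplier $|\xi|^{2s}$.

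The paper applies H\"older's inequality in $\xi$ with exponents $\frac{1}{s}$ and $\frac{1}{1-s}$ to $|\xi|^{2s}|\hat{u}|^{2s}\cdot|\hat{u}|^{2-2s}$. This gives the multiplicative interpolation bound $\|u\|_{D^{s,2}(\mathbb{R}^{N})}^{2}\leqslant C\|\nabla u\|_{L^{2}(\mathbb{R}^{N})}^{2s}\|u\|_{L^{2}(\mathbb{R}^{N})}^{2-2s}$, and Young's inequality then yields $C\|u\|_{H^{1}(\mathbb{R}^{N})}^{2}$. Your pointwise bound $|\xi|^{2s}\leqslant 1+|\xi|^{2}$ goes directly to the additive estimate. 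Your route is a line shorter and is all the embedding needs. The paper's intermediate inequality is scale-invariant and is of the same interpolation type as the Gagliardo--Nirenberg inequalities used later. You can recover it from your bound by applying it to $u(\lambda\,\cdot)$ and optimizing in $\lambda>0$.

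Your version is also more careful than the paper's in two respects. First, you keep the normalization constant $C(N,s)$, which the paper's first equality silently drops. Second, you check that each $u\in H^{1}(\mathbb{R}^{N})$ actually defines an element of the completion $D^{s,2}(\mathbb{R}^{N})$, which the paper does not discuss.

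In that last step, the phrase ``$u_{n}\to u$ in $L^{2}$, and hence also in $L^{2N/(N-2s)}$'' is compressed, since $L^{2}$ convergence alone does not give this. What you need is that your seminorm bound applied to $u_{n}-u_{m}$, combined with the fractional Sobolev inequality, makes $(u_{n})$ Cauchy in $L^{2N/(N-2s)}(\mathbb{R}^{N})$. Its limit there then agrees a.e. with the $L^{2}$-limit $u$.
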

\begin{proof}
From \cite{Nezza-Palatucci-Valdinoci2012BSM},
H\"{o}lder's and Young's inequalities,
we have that
\begin{equation*}
\begin{aligned}
\|u\|_{D^{s,2}(\mathbb{R}^{N})}^{2}
=&
\int_{\mathbb{R}^{N}}
|\xi|^{2s}|\hat{u}(\xi)|^{2}
\mathrm{d}\xi\\
\leqslant&
\left(
\int_{\mathbb{R}^{N}}
|\xi|^{2}|\hat{u}(\xi)|^{2}
\mathrm{d}\xi
\right)^{\frac{2s}{2}}
\left(
\int_{\mathbb{R}^{N}}
|\hat{u}(\xi)|^{(2-2s)\frac{2}{2-2s}}
\mathrm{d}\xi
\right)^{\frac{2-2s}{2}}\\
=&
\left(
\int_{\mathbb{R}^{N}}
|\nabla u|^{2}
\mathrm{d}x
\right)^{\frac{2s}{2}}
\left(
\int_{\mathbb{R}^{N}}
|u|^{2}
\mathrm{d}x
\right)^{\frac{2-2s}{2}}\\
\leqslant&C\|u\|_{H^{1}(\mathbb{R}^{N})}^{2},
\end{aligned}
\end{equation*}
which proves the result.
\end{proof}

For $p\in(2,2^{*})$ and $u\in H^{1}(\mathbb{R}^{N})$, using the Gagliardo-Nirenberg inequality \cite{Weinstein1983CMP}, we know that there exists a constant $C>0$ such that
\begin{equation}\label{2.1}
\begin{aligned}
\|u\|_{L^{p}(\mathbb{R}^{N})}^{p}
\leqslant
C
\|u\|_{D^{1,2}(\mathbb{R}^{N})}^{\frac{N(p-2)}{2}}
\|u\|_{L^{2}(\mathbb{R}^{N})}^{p-\frac{N(p-2)}{2}}.
\end{aligned}
\end{equation}
For $q\in(2,\frac{2N}{N-2s})$ and $u\in H^{s}(\mathbb{R}^{N})$, we know that there exists a constant $C>0$ such that
\begin{equation}\label{2.2}
\begin{aligned}
\|u\|_{L^{q}(\mathbb{R}^{N})}^{q}
\leqslant
C
\|u\|_{D^{s,2}(\mathbb{R}^{N})}^{\frac{N(q-2)}{2s}}
\|u\|_{L^{2}(\mathbb{R}^{N})}^{q-\frac{N(q-2)}{2s}}.
\end{aligned}
\end{equation}

\section{Proof of Theorem \ref{Theorem1.1}}
In this section,
we prove Theorem \ref{Theorem1.1},
i.e.,
the existence and nonexistence of energy ground state solutions in the in-between critical exponents case.
\begin{lemma}\label{Lemma3.1}
Let $p\in(2+\frac{4s}{N},2+\frac{4}{N})$ and $c>c_{0}$.
Then we have, for all $c'>c$,
\begin{equation*}
\begin{aligned}
m_{c'}\leqslant \left[\frac{c'}{c}\right]^{\frac{2(p-2)}{4-N(p-2)}+1} m_{c}.
\end{aligned}
\end{equation*}
Furthermore,
\begin{equation*}
\begin{aligned}
m_{c}<m_{\bar{c}}+m_{c-\bar{c}},
\end{aligned}
\end{equation*}
for all $\bar{c}\in(0,c)$.
\end{lemma}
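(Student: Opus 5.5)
The plan is to prove the mass-scaling inequality with an $L^{2}$-changing dilation adapted to the local part of the energy. Then I deduce strict subadditivity from it and from the sign of $m_{c}$.

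\textbf{Step 1 (scaling).} For $u\in S_{c}$ and $\theta>0$ set $u_{\theta}(x)=\theta^{\alpha}u(\theta^{\beta}x)$. I choose $\alpha,\beta$ so that the Dirichlet term and the $L^{p}$ term acquire the same factor. This requires $2\alpha+2\beta=p\alpha$, i.e.\ $\beta=\alpha(p-2)/2$. I then normalize the mass so that $\|u_{\theta}\|_{L^{2}}^{2}=\theta c$, which forces $\alpha=\frac{2}{4-N(p-2)}$; this is positive because $p<2+\frac4N$. With $\theta=c'/c>1$ the factors are:
\begin{itemize}
\item $\theta^{E}$ with $E:=1+\frac{2(p-2)}{4-N(p-2)}>1$ for $\|\nabla u\|_{2}^{2}$ and for $\|u\|_{p}^{p}$;
\item $\theta^{1+s\alpha(p-2)}$ for the Gagliardo seminorm $\|u\|_{D^{s,2}}^{2}$.
\end{itemize}
Since $s<1$ and $\theta>1$, we have $\theta^{1+s\alpha(p-2)}\le\theta^{E}$. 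Hence
\[
J(u_{\theta})=\theta^{E}J(u)-\tfrac12\bigl(\theta^{E}-\theta^{1+s\alpha(p-2)}\bigr)\|u\|_{D^{s,2}(\mathbb{R}^{N})}^{2}\leqslant \theta^{E}J(u).
\]
Taking the infimum over $u\in S_{c}$ (note $u_{\theta}\in S_{c'}$) gives $m_{c'}\le (c'/c)^{E}m_{c}$. This step in fact works for every $0<c<c'$, and I will use that generality in Step 3.

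\textbf{Step 2 (sign of $m$).} I need $m_{a}\le 0$ for all $a>0$ and $m_{c}<0$ for $c>c_{0}$.
\begin{itemize}
\item For the first, the mass-preserving dilation $t^{N/2}u(tx)$ with $t\to0$ sends every term of $J$ to $0$, so $J$ approaches $0$ along $S_{a}$ and $m_{a}\le0$.
\item For the second, I use the characterization of $c_{0}$ underlying Theorem \ref{Theorem1.1} (adapted from \cite{LuoTJ-Hajaiej2022ANS}): $c_{0}$ is the threshold above which $m_{c}<0$. This is the one place where I rely on how $c_{0}$ is defined. If it were defined only through existence, I would instead argue as follows. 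Existence at $c_{0}$ gives a minimizer $u_{c_0}$ with $J(u_{c_0})=m_{c_0}\le 0$. The inequality in Step 1 is strict whenever $\|u\|_{D^{s,2}}\neq0$, so applying it to $u_{c_0}$ with $\theta=c/c_{0}>1$ gives $m_{c}<0$.
\end{itemize}

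\textbf{Step 3 (strict subadditivity).} Let $c>c_{0}$ and $a\in(0,c)$. I claim $\frac{a}{c}m_{c}<m_{a}$, and I split into two cases.
\begin{itemize}
\item If $m_{a}<0$, Step 1 with $c'=c$ gives $\frac{a}{c}m_{c}\le (c/a)^{E-1}m_{a}<m_{a}$, since $E>1$ and $m_{a}<0$.
\item If $m_{a}=0$, then $\frac{a}{c}m_{c}<0=m_{a}$ because $m_{c}<0$.
\end{itemize}
Applying the claim with $a=\bar c$ and $a=c-\bar c$ and adding yields $m_{c}<m_{\bar c}+m_{c-\bar c}$.

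The main obstacle is Step 2, namely securing $m_{c}<0$ for $c>c_{0}$ from the way $c_{0}$ was introduced. Everything else is explicit scaling bookkeeping.
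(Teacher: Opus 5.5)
Your proposal is correct and is essentially the paper's proof. It uses the same dilation with exponents $a=\frac{2}{4-N(p-2)}$, $b=\frac{p-2}{4-N(p-2)}$ and the same reliance on $m_{c}<0$ for $c>c_{0}$; the paper likewise takes this as the meaning of $c_{0}$, and your fallback via a minimizer at $c_{0}$ would be circular here, since Theorem \ref{Theorem1.1} is derived from this lemma. It also uses the same splitting $m_{c}=\frac{\bar c}{c}m_{c}+\frac{c-\bar c}{c}m_{c}$ with cases on the signs of $m_{\bar c}$ and $m_{c-\bar c}$. Your weighted claim $\frac{a}{c}m_{c}<m_{a}$ is exactly what the case $m_{\bar c},m_{c-\bar c}<0$ needs: there the paper's write-up only records $m_{c}<m_{\bar c}$ and $m_{c}<m_{c-\bar c}$, which by themselves do not give the sum inequality.
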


\begin{proof}
Let $u\in S_{c}$ and $\bar{u}(x):=(\frac{c'}{c})^{a}u((\frac{c'}{c})^{b}x)$,
where
\begin{equation*}
\begin{aligned}
a=\frac{2}{4-N(p-2)},~~b=\frac{p-2}{4-N(p-2)}.
\end{aligned}
\end{equation*}
Then we have that
\begin{equation*}
\begin{aligned}
\int_{\mathbb{R}^{N}}
|\bar{u}|^{2}
\mathrm{d}x
=&(c')^{2a-bN}c^{1-2a+bN}=c'.
\end{aligned}
\end{equation*}
A straigthfoward computation implies that:
\begin{equation*}
\begin{aligned}
\int_{\mathbb{R}^{N}}
|\nabla \bar{u}|^{2}
\mathrm{d}x
=&(\frac{c'}{c})^{2a+2b-bN}
\int_{\mathbb{R}^{N}}
|\nabla u(y)|^{2}
\mathrm{d}y,
\end{aligned}
\end{equation*}
\begin{equation*}
\begin{aligned}
\int_{\mathbb{R}^{N}}
\int_{\mathbb{R}^{N}}
\frac{|\bar{u}(x)-\bar{u}(y)|^{2}}{|x-y|^{N+2s}}
\mathrm{d}x
\mathrm{d}y
=&(\frac{c'}{c})^{2a+2bs-bN}
\int_{\mathbb{R}^{N}}
\int_{\mathbb{R}^{N}}
\frac{|u(x)-u(y)|^{2}}{|x-y|^{N+2s}}
\mathrm{d}x
\mathrm{d}y,
\end{aligned}
\end{equation*}
\begin{equation*}
\begin{aligned}
\int_{\mathbb{R}^{N}}
|\bar{u}|^{p}
\mathrm{d}x
=&(\frac{c'}{c})^{pa-bN}
\int_{\mathbb{R}^{N}}
|u|^{p}
\mathrm{d}x.
\end{aligned}
\end{equation*}
Then $\bar{u}(x)\in S_{c'}$, and since $2a+2b-bN=pa-bN$,
we get that
\begin{equation*}
\begin{aligned}
m_{c'}
\leqslant& J(\bar{u})\\
=&\frac{1}{2}
\int_{\mathbb{R}^{N}}
|\nabla \bar{u}|^{2}
\mathrm{d}x
+\frac{1}{2}
\int_{\mathbb{R}^{N}}
\int_{\mathbb{R}^{N}}
\frac{|\bar{u}(x)-\bar{u}(y)|^{2}}{|x-y|^{N+2s}}
\mathrm{d}x
\mathrm{d}y
-\frac{1}{p}
\int_{\mathbb{R}^{N}}
|\bar{u}|^{p}
\mathrm{d}x\\
=&(\frac{c'}{c})^{2a+2b-bN}
\frac{1}{2}
\int_{\mathbb{R}^{N}}
|\nabla u|^{2}
\mathrm{d}x
+(\frac{c'}{c})^{2a+2bs-bN}
\frac{1}{2}
\int_{\mathbb{R}^{N}}
\int_{\mathbb{R}^{N}}
\frac{|u(x)-u(y)|^{2}}{|x-y|^{N+2s}}
\mathrm{d}x
\mathrm{d}y\\
&-(\frac{c'}{c})^{pa-bN}\frac{1}{p}
\int_{\mathbb{R}^{N}}
|u|^{p}
\mathrm{d}x\\
=&(\frac{c'}{c})^{2a+2b-bN}
\left[
\frac{1}{2}
\int_{\mathbb{R}^{N}}
|\nabla u|^{2}
\mathrm{d}x
+
\frac{1}{2}
(\frac{c'}{c})^{2b(s-1)}
\int_{\mathbb{R}^{N}}
\int_{\mathbb{R}^{N}}
\frac{|u(x)-u(y)|^{2}}{|x-y|^{N+2s}}
\mathrm{d}x
\mathrm{d}y
-
\frac{1}{p}
\int_{\mathbb{R}^{N}}
|u|^{p}
\mathrm{d}x
\right]\\
\leqslant&(\frac{c'}{c})^{2a+2b-bN}J(u)
\end{aligned}
\end{equation*}
which gives, since $2a+2b-bN
=2b+1
=\frac{2(p-2)}{4-N(p-2)}+1$,
\begin{equation*}
\begin{aligned}
m_{c'}\leqslant \left[\frac{c'}{c}\right]^{\frac{2(p-2)}{4-N(p-2)}+1} m_{c}.
\end{aligned}
\end{equation*}
One has that
\begin{equation*}
\begin{aligned}
m_{c}
=&
\frac{c-\bar{c}}{c}m_{c}
+\frac{\bar{c}}{c}m_{c}\\
=&
\frac{c-\bar{c}}{c}m_{\frac{c}{c-\bar{c}}(c-\bar{c})}
+\frac{\bar{c}}{c}m_{\frac{c}{\bar{c}}\bar{c}}.
\end{aligned}
\end{equation*}
If $m_{\bar{c}}=0$ and $m_{c-\bar{c}}=0$, then we have $\bar{c}\leqslant c_{0}$ and $c-\bar{c}\leqslant c_{0}$. Therefore,
\begin{equation*}
\begin{aligned}
m_{c}<0=m_{\bar{c}}~~\mathrm{and}~~ m_{c}<0=m_{c-\bar{c}},
\end{aligned}
\end{equation*}
and then $m_{c}<m_{\bar{c}}+m_{c-\bar{c}}$.

If $m_{\bar{c}}<0$ and $m_{c-\bar{c}}=0$, then we have $\bar{c}>c_{0}$ and $c-\bar{c}\leqslant c_{0}$. And $m_{c}<0=m_{c-\bar{c}}$
and
\begin{equation*}
\begin{aligned}
m_{c}\leqslant (\frac{c}{\bar{c}})^{\frac{2(p-2)}{4-N(p-2)}+1}m_{\bar{c}}<m_{\bar{c}},
\end{aligned}
\end{equation*}
and then $m_{c}<m_{\bar{c}}+m_{c-\bar{c}}$.

If $m_{\bar{c}}<0$ and $m_{c-\bar{c}}<0$, then we have $\bar{c}>c_{0}$ and $c-\bar{c}>c_{0}$. Therefore,
\begin{equation*}
\begin{aligned}
m_{c}\leqslant (\frac{c}{\bar{c}})^{\frac{2(p-2)}{4-N(p-2)}+1}m_{\bar{c}}<m_{\bar{c}},
\end{aligned}
\end{equation*}
and
\begin{equation*}
\begin{aligned}
m_{c}\leqslant (\frac{c}{c-\bar{c}})^{\frac{2(p-2)}{4-N(p-2)}+1}m_{c-\bar{c}}<m_{c-\bar{c}},
\end{aligned}
\end{equation*}
and then $m_{c}<m_{\bar{c}}+m_{c-\bar{c}}$.
\end{proof}

\begin{lemma}\label{Lemma3.2}
Let $p\in(2+\frac{4s}{N},2+\frac{4}{N})$, $c>c_{0}$ and $\{u_{n}\}\subset S_{c}$ be a minimizing sequence of $m_{c}<0$.
Then there exists $x_{n}\subset \mathbb{R}^{N}$ such that  $\bar{u}_{n}:=u_{n}(x+x_{n})$ convergences to $u\not \equiv0$ in $L_{loc}^{2}(\mathbb{R}^{N})$, and $\{u_{n}\}\subset S_{c}$ is also a minimizing sequence of $m_{c}<0$.
\end{lemma}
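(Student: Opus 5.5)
Since $J(u_n)\to m_c<0$, the first step is to show that $\{u_n\}$ is bounded in $H^{1}(\mathbb{R}^{N})$. By \eqref{2.1}, with exponent $\frac{N(p-2)}{2}<2$ because $p<2+\frac4N$, we have
\begin{equation*}
\begin{aligned}
J(u_n)\geqslant \frac12\|u_n\|_{D^{1,2}(\mathbb{R}^{N})}^{2}-\frac{C}{p}c^{\frac{p}{2}-\frac{N(p-2)}{4}}\|u_n\|_{D^{1,2}(\mathbb{R}^{N})}^{\frac{N(p-2)}{2}} .
\end{aligned}
\end{equation*}
Hence $\|u_n\|_{D^{1,2}(\mathbb{R}^{N})}$ is bounded. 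The $L^2$ norm is fixed, and Lemma \ref{Lemma2.1} then bounds the $D^{s,2}$ seminorm as well.

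The second step is to rule out vanishing. Since $J(u_n)\to m_c<0$ and the kinetic terms are nonnegative, for $n$ large we get $\frac1p\int|u_n|^p\geqslant -m_c/2>0$, so $\liminf\|u_n\|_{L^p(\mathbb{R}^{N})}>0$. Suppose that $\sup_{y\in\mathbb{R}^N}\int_{B_1(y)}|u_n|^2\,\mathrm{d}x\to0$. Lions' concentration-compactness lemma for bounded sequences in $H^1(\mathbb{R}^{N})$ then gives $u_n\to0$ in $L^q(\mathbb{R}^{N})$ for every $q\in(2,2^*)$, in particular for $q=p$, which is a contradiction. So there exist $\delta>0$ and points $x_n\in\mathbb{R}^N$ with $\int_{B_1(0)}|u_n(x+x_n)|^2\,\mathrm{d}x\geqslant\delta$.

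The third step is compactness and the translated sequence. Set $\bar u_n:=u_n(\cdot+x_n)$. This sequence is bounded in $H^1(\mathbb{R}^{N})$, so up to a subsequence $\bar u_n\rightharpoonup u$ weakly in $H^1(\mathbb{R}^{N})$. Rellich's theorem on balls gives $\bar u_n\to u$ in $L^2_{loc}(\mathbb{R}^{N})$. Therefore $\int_{B_1}|u|^2\geqslant\delta$, and so $u\not\equiv0$. Finally, the $L^2$ norm, the Dirichlet integral, the Gagliardo seminorm and the $L^p$ norm are all invariant under translation. Thus $\bar u_n\in S_c$ and $J(\bar u_n)=J(u_n)\to m_c$, so $\{\bar u_n\}$ is also a minimizing sequence for $m_c<0$.

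The main obstacle is the non-vanishing step. It requires the Lions lemma in the mixed setting, but the $H^1(\mathbb{R}^{N})$ bound reduces it to the classical version. It also requires using the strict negativity $m_c<0$, which holds for $c>c_0$ by the definition of $c_0$ in Theorem \ref{Theorem1.1}. The fact $m_c<0$ is exactly what prevents the $L^p$ mass from escaping.
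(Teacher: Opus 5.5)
Your proof is correct and follows the same route as the paper: $m_c<0$ keeps the nonlinear term bounded away from zero, and Lions' vanishing lemma then gives translations $x_n$ with a nontrivial $L^2_{loc}$ limit. Your version is more careful than the paper's. The paper omits the $H^1$ bound that Lions' lemma requires and phrases non-vanishing through $\int|u_n|^{2^*}$, an exponent the lemma does not cover. You instead work directly with the $L^p$ norm for $p\in(2,2^*)$, and you upgrade weak convergence to strong $L^2_{loc}$ convergence via Rellich.
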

\begin{proof}
If $\lim\limits_{n\to\infty}\int_{\mathbb{R}^{N}}|u_{n}|^{2^{*}}\mathrm{d}x=0$, then
\begin{equation*}
\begin{aligned}
0>m_{c}+o_{n}(1)=J(u_{n})=
\frac{1}{2}\|u_{n}\|_{D^{1,2}(\mathbb{R}^{N})}^{2}
+\frac{1}{2}\|u_{n}\|_{D^{s,2}(\mathbb{R}^{N})}^{2}\geqslant 0.
\end{aligned}
\end{equation*}
This is a contradiction.
We get that $\lim\limits_{n\to\infty}\int_{\mathbb{R}^{N}}|u_{n}|^{2^{*}}\mathrm{d}x>0$.
By using Lions' vanishing Lemma \cite{Willem1996Book}, there exists $x_{n}\subset \mathbb{R}^{N}$ such that  $\bar{u}_{n}:=u_{n}(x+x_{n})\rightharpoonup u\not \equiv0$ in $L^{2}_{loc}(\mathbb{R}^{N})$.
\end{proof}

We are now in a position to prove Theorem \ref{Theorem1.1}.
\begin{proof}[Proof of Theorem \ref{Theorem1.1}]
{\bf Step 1.} In this step, we consider the case $c\in(0,c_{0})$.
Define
\begin{equation*}
\begin{aligned}
J_{c}(u)=
\frac{1}{2}\|u\|_{D^{1,2}(\mathbb{R}^{N})}^{2}
+\frac{c^{\frac{2(p-2)}{4-N(p-2)}(s-1)}}{2}\|u\|_{D^{s,2}(\mathbb{R}^{N})}^{2}
-
\frac{1}{p}
\int_{\mathbb{R}^{N}}
|u|^{p}
\mathrm{d}x.
\end{aligned}
\end{equation*}
Clearly, $J_{1}(u)=J(u)$, $J_{c_{0}}(u)<J_{c}(u)$, and
\begin{equation*}
\begin{aligned}
J(c^{a}u(c^{b}\cdot))=c^{\frac{2(p-2)}{4-N(p-2)}+1}J_{c}(u).
\end{aligned}
\end{equation*}
where $a=\frac{2}{4-N(p-2)}$ and $b=\frac{p-2}{4-N(p-2)}$ in Lemma \ref{Lemma3.1}.

Assume on the contrary that $m_{c}=0$ is achievd, and $u_{c}$ is the minimizer. Then
\begin{equation*}
\begin{aligned}
0
=m_{c}
=J_{1}(u_{c}).
\end{aligned}
\end{equation*}
Set $\tilde{u}_{c}:=c^{-a}u_{c}(c^{-b}\cdot)$. Then $\tilde{u}_{c}\in S_{1}$ and
$
u_{c}=c^{a}\tilde{u}_{c}(c^{b}\cdot)$
and
\begin{equation*}
\begin{aligned}
0=&J(u_{c})\\
=&J(c^{a}\tilde{u}_{c}(c^{b}\cdot))\\
=&c^{\frac{2(p-2)}{4-N(p-2)}+1}J_{c}(\tilde{u}_{c}).
\end{aligned}
\end{equation*}
This shows that $J_{c}(\tilde{u}_{c})=0$ and $J_{c_{0}}(\tilde{u}_{c})<J_{c}(\tilde{u}_{c})=0$.

Set $\tilde{\tilde{u}}_{c_{0}}:=c_{0}^{a}\tilde{u}_{c}(c_{0}^{b}\cdot)$. Then $\tilde{\tilde{u}}_{c_{0}}\in S_{c_{0}}$ and
\begin{equation*}
\begin{aligned}
m_{c_{0}}\leqslant&J(\tilde{\tilde{u}}_{c_{0}})\\
=&J(c_{0}^{a}\tilde{u}_{c}(c_{0}^{b}\cdot))\\
=&c_{0}^{\frac{2(p-2)}{4-N(p-2)}+1}J_{c_{0}}(\tilde{u}_{c})<0.
\end{aligned}
\end{equation*}
This is a contradiction with $m_{c_{0}}=0$. Hence, the infimum is not achievd.

{\bf Step 2.}
In this step, we consider the case $c\in(c_{0},+\infty)$.
It follows from Lemma \ref{Lemma3.2} that
\begin{equation*}
\begin{aligned}
\bar{u}_{n}\rightharpoonup u\not\equiv0~\mathrm{in}~H^{1}(\mathbb{R}^{N}),
~~
\bar{u}_{n}\to u~\mathrm{a.e.}~\mathrm{in}~\mathbb{R}^{N}.
\end{aligned}
\end{equation*}
We claim that $\int_{\mathbb{R}^{N}}|u|^{2}\mathrm{d}x=c$.
Suppose on the contrary that
\begin{equation*}
\begin{aligned}
\int_{\mathbb{R}^{N}}
|u|^{2}
\mathrm{d}x
=\bar{c}
<c
=
\lim_{n\to\infty}
\int_{\mathbb{R}^{N}}
|\bar{u}_{n}|^{2}
\mathrm{d}x.
\end{aligned}
\end{equation*}
If $J(u)=m_{\bar{c}}$,
then by using Br\'{e}zis-Lieb's Lemma \cite{Brezis-Lieb1983PAMS} and Lemma \ref{Lemma3.1},
we know that
\begin{equation*}
\begin{aligned}
m_{c}
=&\lim_{n\to\infty}J(\bar{u}_{n})\\
=&J(u)
+\lim_{n\to\infty}J(\bar{u}_{n}-u)\\
\geqslant&m_{\bar{c}}+m_{c-\bar{c}}\\
>&m_{c}.
\end{aligned}
\end{equation*}
This is a contradiction.
Simiarly, if $J(u)>m_{\bar{c}}$, then we have that
\begin{equation*}
\begin{aligned}
m_{c}
=&J(u)
+\lim_{n\to\infty}J(\bar{u}_{n}-u)\\
>&m_{\bar{c}}+m_{c-\bar{c}}\\
\geqslant&m_{c}.
\end{aligned}
\end{equation*}
This is also a contradiction. Therefore, $\int_{\mathbb{R}^{3}}|u|^{2}\mathrm{d}x=c$. Moreover, we can see that
\begin{equation*}
\begin{aligned}
J(u)=m_{c},
\end{aligned}
\end{equation*}
and
\begin{equation*}
\begin{aligned}
\bar{u}_{n}\to u~\mathrm{in}~H^{1}(\mathbb{R}^{N}).
\end{aligned}
\end{equation*}

{\bf Step 3.}
In this step,
we consider the critical mass case $c=c_{0}$. There are two major difficulties to show the  relative compactness minimizing sequences (up to translation).
First,
we cannot rule out the vanishing of the minimizing sequences under $m_{c_{0}}=0$.
Second,
we can not rule out the dichotomy without the strict sub-additivity inequality $m_{c}<m_{\bar{c}}+m_{c-\bar{c}}$.

To overcome these challenges, we will use some arguments from \cite[Lemma 3.2]{LuoTJ-Hajaiej2022ANS}  and \cite[Page 1936, Line 12]{Catto-Dolbeault-Sanchez-Soler2013MMMAS}.
Let $c_{n}:=c_{0}+\frac{1}{n}$, for all $n\in \mathbb{N}^{+}$.
From $c_{n}>c_{0}$ and $m_{c_{n}}<0$,
we know by Step 2 that $m_{c_{n}}$ admits a minimizer $u_{c_{n}}\in S_{c_{n}}$ for all $n\in \mathbb{N}^{+}$.
Since $m_{c_{n}}\to m_{c_{0}}$,
it can be deduced that
\begin{equation*}
\begin{aligned}
\lim_{n\to\infty}
J(u_{c_{n}})
=
\lim_{n\to\infty}
m_{c_{n}}
=
m_{c_{0}}
=0.
\end{aligned}
\end{equation*}
It follows from \eqref{2.1} that
\begin{equation*}
\begin{aligned}
J(u_{c_{n}})
\geqslant&
\frac{1}{2}
\|u_{c_{n}}\|_{D^{1,2}(\mathbb{R}^{N})}^{2}
-
C
\|u_{c_{n}}\|_{D^{1,2}(\mathbb{R}^{N})}^{\frac{N(p-2)}{2}}
c_{n}^{\frac{p}{2}-\frac{N(p-2)}{4}}.
\end{aligned}
\end{equation*}
This implies that $\|u_{c_{n}}\|_{D^{1,2}(\mathbb{R}^{N})}$ is bounded via $p<2+\frac{4}{N}$.
Additionally,
$\|u_{c_{n}}\|_{L^{2}(\mathbb{R}^{N})}^{2}=c_{n}\to c_{0}>0$,
then we conclude that $\{u_{c_{n}}\}$ is bounded in $H^{1}(\mathbb{R}^{N})$.

We now prove that
\begin{equation*}
\begin{aligned}
\lim_{n\to\infty}
\int_{\mathbb{R}^{N}}
|u_{c_{n}}|^{p}
\mathrm{d}x
>0.
\end{aligned}
\end{equation*}
Indeed,
if $\lim\limits_{n\to\infty}
\int_{\mathbb{R}^{N}}
|u_{c_{n}}|^{p}
\mathrm{d}x=0$,
then by $\lim\limits_{n\to\infty}J(u_{c_{n}})$,
we obtain that
\begin{equation}\label{3.1}
\begin{aligned}
\lim_{n\to\infty}
\|u_{c_{n}}\|_{D^{1,2}(\mathbb{R}^{N})}^{2}
=
\lim_{n\to\infty}
\|u_{c_{n}}\|_{D^{s,2}(\mathbb{R}^{N})}^{2}
=0.
\end{aligned}
\end{equation}
On the other hand, by using inequality \eqref{2.2},
we have that
\begin{equation}\label{3.2}
\begin{aligned}
J(u_{c_{n}})
\geqslant&
\frac{1}{2}
\|u_{c_{n}}\|_{D^{1,2}(\mathbb{R}^{N})}^{2}
+
\frac{1}{2}
\|u_{c_{n}}\|_{D^{s,2}(\mathbb{R}^{N})}^{2}
-
C
\|u_{c_{n}}\|_{D^{s,2}(\mathbb{R}^{N})}^{\frac{N(p-2)}{2s}}
c_{n}^{\frac{p}{2}-\frac{N(p-2)}{4s}}\\
\geqslant&
\|u_{c_{n}}\|_{D^{s,2}(\mathbb{R}^{N})}^{2}
\left[
\frac{1}{2}
-
C
\|u_{c_{n}}\|_{D^{s,2}(\mathbb{R}^{N})}^{\frac{N(p-2)}{2s}-2}
c_{n}^{\frac{p}{2}-\frac{N(p-2)}{4s}}
\right],
\end{aligned}
\end{equation}
where we used the fact that $p>2+\frac{4s}{N}$.
By \eqref{3.1} and \eqref{3.2},
we have that $J(u_{c_{n}})\geqslant 0$ for $n$ large enough.
This contradicts the fact that $J(u_{c_{n}})=m_{c_{n}}<0$.
Then the claim is verified.
By applying Lions' vanishing Lemma \cite{Willem1996Book},
there exists $x_{n}\subset \mathbb{R}^{N}$ such that  $\bar{u}_{c_{n}}:=u_{c_{n}}(x+x_{n})$ convergences to $\bar{u}_{c_{0}}\not \equiv0$ in $L_{loc}^{2}(\mathbb{R}^{N})$.
By using the Br\'{e}zis-Lieb Lemma \cite{Brezis-Lieb1983PAMS},
we get that
\begin{equation}\label{3.3}
\begin{aligned}
0=
m_{c_{0}}
=
\lim_{n\to\infty}
m_{c_{n}}
=
\lim_{n\to\infty}
J(\bar{u}_{c_{n}})
=J(\bar{u}_{c_{0}})
+\lim_{n\to\infty}J(\bar{u}_{c_{n}}-\bar{u}_{c_{0}}).
\end{aligned}
\end{equation}
Let us prove that $\bar{u}_{c_{0}}$ is a minimizer of $m_{c_{0}}$.
Clearly,
\begin{equation*}
\begin{aligned}
c_{0}
=
\lim_{n\to\infty}
c_{n}
=
\lim_{n\to\infty}
\|\bar{u}_{c_{n}}\|_{L^{2}(\mathbb{R}^{N})}^{2}
=
\|\bar{u}_{c_{0}}\|_{L^{2}(\mathbb{R}^{N})}^{2}
+
\lim_{n\to\infty}
\|\bar{u}_{c_{n}}-\bar{u}_{c_{0}}\|_{L^{2}(\mathbb{R}^{N})}^{2}.
\end{aligned}
\end{equation*}
Since $0<\|\bar{u}_{c_{0}}\|_{L^{2}(\mathbb{R}^{N})}^{2}\leqslant c_{0}$,
we obtain that $J(\bar{u}_{c_{0}})\geqslant 0$
and
\begin{equation*}
\begin{aligned}
\lim_{n\to\infty}
\|\bar{u}_{c_{n}}-\bar{u}_{c_{0}}\|_{L^{2}(\mathbb{R}^{N})}^{2}
=
c_{0}
-
\|\bar{u}_{c_{0}}\|_{L^{2}(\mathbb{R}^{N})}^{2}<c_{0}.
\end{aligned}
\end{equation*}
This shows that
$\lim_{n\to\infty}J(\bar{u}_{c_{n}}-\bar{u}_{c_{0}})\geqslant 0$.
From \eqref{3.3},
we know that $J(\bar{u}_{c_{0}})=0$.
If $\|\bar{u}_{c_{0}}\|_{L^{2}(\mathbb{R}^{N})}^{2}<c_{0}$,
then $\bar{u}_{c_{0}}$ is a minimizer of $m_{c}$ for some $c<c_{0}$,
and we reach a contradiction with Step 1.
Hence, we know that $\|\bar{u}_{c_{0}}\|_{L^{2}(\mathbb{R}^{N})}^{2}=c_{0}$,
which shows that $\bar{u}_{c_{0}}$ is a minimizer of $m_{c_{0}}$.

{\bf Step 4.} For $c\geqslant c_{0}$, we know that $u\in S_{c}$ is a minimizer of $m_{c}$, then $u$ is a critical point of $J$ on the shpere $S_{c}$. There exists a Lagrange multiplier $\omega$ such that $(u,\omega)\in H^{1}(\mathbb{R}^{N})\times \mathbb{R}$ is a couple of weak solution to equation \eqref{1.1}.
Then
\begin{equation*}
\begin{aligned}
\omega\ c=\omega \int_{\mathbb{R}^{N}}|u|^{2}\mathrm{d}x
=&\int_{\mathbb{R}^{N}}|u|^{p}\mathrm{d}x
-\|u\|_{D^{1,2}(\mathbb{R}^{N})}^{2}
-\|u\|_{D^{s,2}(\mathbb{R}^{N})}^{2}\\
=&\frac{p-2}{p}
\int_{\mathbb{R}^{N}}|u|^{p}\mathrm{d}x-2J(u)\\
=&\frac{p-2}{p}
\int_{\mathbb{R}^{N}}|u|^{p}\mathrm{d}x-2m_{c}\\
\geqslant&\frac{p-2}{p}
\int_{\mathbb{R}^{N}}|u|^{p}\mathrm{d}x,
\end{aligned}
\end{equation*}
which shows that
\begin{equation*}
\begin{aligned}
w\geqslant \frac{p-2}{pc}
\int_{\mathbb{R}^{N}}|u|^{p}\mathrm{d}x>0.
\end{aligned}
\end{equation*}
This ends the proof.
\end{proof}

\section{Proof of Theorem \ref{Theorem1.2}}
In this section,
we prove Theorem \ref{Theorem1.2},
i.e.,
the existence of an optimizer of the Gagliardo-Nirenberg inequality for the mixed local-nonlocal operator.

\begin{lemma}\label{Lemma4.1}
For $p\in(2+\frac{4s}{N},2+\frac{4}{N})$ and $u\in H^{1}(\mathbb{R}^{N})$,
there exists a constant $C_{1,s,p}>0$ such that
\begin{equation}\label{4.1}
\begin{aligned}
\|u\|_{L^{p}(\mathbb{R}^{N})}^{p}
\leqslant C_{1,s,p}
\|u\|_{D^{s,2}(\mathbb{R}^{N})}^{\frac{2N+4-pN}{2(1-s)}}
\|u\|_{D^{1,2}(\mathbb{R}^{N})}^{\frac{pN-2N-4s}{2(1-s)}}
\|u\|_{L^{2}(\mathbb{R}^{N})}^{p-2}.
\end{aligned}
\end{equation}
\end{lemma}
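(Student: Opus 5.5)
Since the right-hand side of \eqref{4.1} only involves the three quadratic quantities $\int|u|^2$, $\int|\xi|^{2s}|\hat u|^2$ and $\int|\xi|^2|\hat u|^2$, I plan to derive \eqref{4.1} by interpolating between the classical Gagliardo--Nirenberg inequality \eqref{2.1} and a refined interpolation in Fourier space. A preliminary check is that both exponents $\theta_s:=\frac{2N+4-pN}{2(1-s)}$ and $\theta_1:=\frac{pN-2N-4s}{2(1-s)}$ are positive precisely because $2+\frac{4s}{N}<p<2+\frac{4}{N}$. They satisfy $\theta_s+\theta_1=\frac{N(p-2)}{2}\cdot\frac{1-s}{1-s}\cdot$(after simplification) and $s\theta_s+\theta_1=\frac{N(p-2)}{2}$. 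The latter identity is exactly what scaling $u\mapsto u(\lambda\cdot)$ and $u\mapsto \mu u$ require, together with the $L^2$ power $p-2$.

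The key step is the following. By \eqref{2.1} I have $\|u\|_p^p\le C\|\nabla u\|_2^{N(p-2)/2}\|u\|_2^{p-N(p-2)/2}$. I want to trade part of the $\|\nabla u\|_2$ power and the extra $L^2$ power, namely $p-2-\ldots$, for a power of $\|u\|_{D^{s,2}}$. For this I use the Hölder interpolation already proved in Lemma \ref{Lemma2.1}, but in reverse form: the frequency weight $|\xi|^{2s}$ sits between $1$ and $|\xi|^2$, and I interpolate $|\xi|^{2t}$ between $|\xi|^{2s}$ and $|\xi|^{2}$. The difficulty is that \eqref{2.1} carries too much $L^2$ weight, so I cannot simply bound $\|u\|_2$ by $D^{s,2}$. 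Instead I will apply the fractional Gagliardo--Nirenberg inequality with an intermediate order $t\in(s,1)$ chosen so that $p=2+\frac{4t}{N}$ is mass-critical for $t$:
\[
\|u\|_p^p\le C\|u\|_{D^{t,2}}^{2}\|u\|_2^{p-2},\qquad t=\tfrac{N(p-2)}{4}\in(s,1).
\]
This is \eqref{2.2} with $q=p$ and $s$ replaced by $t$, valid since $p<\frac{2N}{N-2t}$. Then Hölder in Fourier variables gives
\[
\int|\xi|^{2t}|\hat u|^2\le\Big(\int|\xi|^{2s}|\hat u|^2\Big)^{\frac{1-t}{1-s}}\Big(\int|\xi|^{2}|\hat u|^2\Big)^{\frac{t-s}{1-s}},
\]
because $t=s\frac{1-t}{1-s}+\frac{t-s}{1-s}$. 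Substituting produces $\|u\|_{D^{s,2}}^{2\frac{1-t}{1-s}}\|u\|_{D^{1,2}}^{2\frac{t-s}{1-s}}\|u\|_2^{p-2}$. With $t=N(p-2)/4$ I get $2\frac{1-t}{1-s}=\frac{4-N(p-2)}{2(1-s)}=\theta_s$ and $2\frac{t-s}{1-s}=\frac{N(p-2)-4s}{2(1-s)}=\theta_1$, which is exactly \eqref{4.1}.

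The only point needing care is the identification $\|u\|_{D^{s,2}}^2=c_{N,s}\int|\xi|^{2s}|\hat u|^2$ with a harmless constant, as used in Lemma \ref{Lemma2.1}. The main obstacle is justifying the mass-critical fractional inequality for order $t$ with $N\ge3$. I will take it from \eqref{2.2}, observing that the $L^2$ exponent becomes $p-\frac{N(p-2)}{2t}=p-2$; equivalently, it follows from the fractional Sobolev embedding combined with Hölder. Constants are absorbed into $C_{1,s,p}$.
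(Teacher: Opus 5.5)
Your argument is correct, but it takes a different route from the paper. The paper interpolates in Lebesgue exponents. It starts from the two endpoint mass-critical inequalities: \eqref{2.1} at $2+\frac{4}{N}$, involving $D^{1,2}$, and \eqref{2.2} at $2+\frac{4s}{N}$, involving $D^{s,2}$. It then applies the H\"older (log-convexity) estimate $\int|u|^p\leqslant(\int|u|^{2+\frac{4s}{N}})^{\alpha}(\int|u|^{2+\frac{4}{N}})^{1-\alpha}$ with $\alpha=\frac{2N+4-pN}{4(1-s)}$, and reads off the exponents $2\alpha$, $2(1-\alpha)$ and $p-2$.

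You instead use a single mass-critical fractional inequality at the intermediate order $t=\frac{N(p-2)}{4}\in(s,1)$. You then interpolate in frequency, $\int|\xi|^{2t}|\hat u|^2\,\mathrm{d}\xi\leqslant(\int|\xi|^{2s}|\hat u|^2\,\mathrm{d}\xi)^{\frac{1-t}{1-s}}(\int|\xi|^{2}|\hat u|^2\,\mathrm{d}\xi)^{\frac{t-s}{1-s}}$. All your exponent checks are right: $t\in(s,1)$, $p<\frac{2N}{N-2t}$, and the $L^2$ power $p-\frac{N(p-2)}{2t}=p-2$. Your weight $\frac{1-t}{1-s}$ coincides exactly with the paper's $\alpha$.

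The trade-off between the two routes:
\begin{itemize}
\item The paper needs only the two endpoint inequalities it already has.
\item Yours needs the fractional Sobolev embedding at the extra order $t$.
\item In exchange, your route shows structurally why the two derivative exponents sum to $2$ and why the $L^2$ power is $p-2$: it is just mass-criticality at order $t$.
\item Your route also transfers verbatim to \eqref{1.3}, taking $t\in(s_1,s_2)$.
\end{itemize}

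Two small clean-ups. Your first-paragraph formula for $\theta_s+\theta_1$ is garbled; in fact $\theta_s+\theta_1=2$, but it is never used. The abandoned attempt to trade powers in \eqref{2.1} can simply be deleted.
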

\begin{proof}
Taking $p=2+\frac{4}{N}$ and  $q=2+\frac{4s}{N}$ in \eqref{2.1} and \eqref{2.2},
we have that
\begin{equation}\label{4.2}
\begin{aligned}
\|u\|_{L^{2+\frac{4}{N}}(\mathbb{R}^{N})}^{2+\frac{4}{N}}
\leqslant
C
\|u\|_{D^{1,2}(\mathbb{R}^{N})}^{2}
\|u\|_{L^{2}(\mathbb{R}^{N})}^{\frac{4}{N}},
\end{aligned}
\end{equation}
and
\begin{equation}\label{4.3}
\begin{aligned}
\|u\|_{L^{2+\frac{4s}{N}}(\mathbb{R}^{N})}^{2+\frac{4s}{N}}
\leqslant&
C
\|u\|_{D^{s,2}(\mathbb{R}^{N})}^{2}
\|u\|_{L^{2}(\mathbb{R}^{N})}^{\frac{4s}{N}}.
\end{aligned}
\end{equation}
Applying the H\"{o}lder inequality for $p\in(2+\frac{4s}{N},2+\frac{4}{N})$,
we obtain that
\begin{equation*}
\begin{aligned}
\int_{\mathbb{R}^{N}}|u|^{p}\mathrm{d}x
\leqslant\left(\int_{\mathbb{R}^{N}}|u|^{2+\frac{4s}{N}}\mathrm{d}x\right)^{\frac{(2+\frac{4}{N})-p}{\frac{4}{N}(1-s)}}
\left(\int_{\mathbb{R}^{N}}|u|^{2+\frac{4}{N}}\mathrm{d}x\right)^{\frac{p-(2+\frac{4s}{N})}{\frac{4}{N}(1-s)}},
\end{aligned}
\end{equation*}
and then using \eqref{4.2} and \eqref{4.3},
\begin{equation*}
\begin{aligned}
&\int_{\mathbb{R}^{N}}|u|^{p}\mathrm{d}x\\
\leqslant&C
\left(\|u\|_{D^{s,2}(\mathbb{R}^{N})}^{2}
\|u\|_{L^{2}(\mathbb{R}^{N})}^{\frac{4s}{N}}
\right)^{\frac{(2+\frac{4}{N})-p}{\frac{4}{N}(1-s)}}
\left(\|u\|_{D^{1,2}(\mathbb{R}^{N})}^{2}
\|u\|_{L^{2}(\mathbb{R}^{N})}^{\frac{4}{N}}\right)^{\frac{p-(2+\frac{4s}{N})}{\frac{4}{N}(1-s)}}\\
=&C
\|u\|_{D^{s,2}(\mathbb{R}^{N})}^{\frac{2N+4-pN}{2(1-s)}}
\|u\|_{D^{1,2}(\mathbb{R}^{N})}^{\frac{pN-2N-4s}{2(1-s)}}
\|u\|_{L^{2}(\mathbb{R}^{N})}^{p-2}.
\end{aligned}
\end{equation*}
\end{proof}

\begin{lemma}\label{Lemma4.2}
Let $N\geqslant 3$, $s\in(0,1)$ and $p \in(2+\frac{4s}{N},2+\frac{4}{N})$.
Then there exists an optimizer $Q\in H^{1}(\mathbb{R}^{N})$ of inequality \eqref{4.1},
that is a weak solution of
\begin{equation}\label{4.4}
\begin{aligned}
-\Delta Q
+(-\Delta)^{s}Q+\omega Q=|Q|^{p-2}Q, \ \ x\in \mathbb{R}^{N},
\end{aligned}
\end{equation}
where $\omega>0,$ and
\begin{equation*}
\begin{aligned}
C_{1,s,p}
=
\frac
{\|Q\|_{L^{p}(\mathbb{R}^{N})}^{p}}
{
\|Q\|_{D^{s,2}(\mathbb{R}^{N})}^{\frac{2N+4-pN}{2(1-s)}}
\|Q\|_{D^{1,2}(\mathbb{R}^{N})}^{\frac{pN-2N-4s}{2(1-s)}}
\|Q\|_{L^{2}(\mathbb{R}^{N})}^{p-2}
}.
\end{aligned}
\end{equation*}
\end{lemma}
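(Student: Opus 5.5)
Set $\alpha:=\frac{2N+4-pN}{2(1-s)}$, $\beta:=\frac{pN-2N-4s}{2(1-s)}$, and consider the Weinstein-type functional
\begin{equation*}
W(u):=\frac{\|u\|_{L^{p}(\mathbb{R}^{N})}^{p}}{\|u\|_{D^{s,2}(\mathbb{R}^{N})}^{\alpha}\|u\|_{D^{1,2}(\mathbb{R}^{N})}^{\beta}\|u\|_{L^{2}(\mathbb{R}^{N})}^{p-2}},\qquad u\in H^{1}(\mathbb{R}^{N})\setminus\{0\}.
\end{equation*}
By Lemma \ref{Lemma4.1}, $C_{1,s,p}:=\sup W<\infty$. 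The first step is to record the invariances of $W$. Since $\alpha+\beta+p-2=p$, the functional is invariant under $u\mapsto \mu u$. Under the dilation $u_\lambda=u(\lambda\,\cdot)$ the numerator scales like $\lambda^{-N}$ and the denominator like $\lambda^{\alpha(s-N/2)+\beta(1-N/2)-(p-2)N/2}$. Using $\alpha s+\beta=\frac{N(p-2)}{2}$, this exponent equals $-N$, so $W$ is also dilation invariant (which is exactly why the exponents in \eqref{4.1} are what they are). By the P\'olya--Szeg\H{o} inequalities for both $\|\nabla u\|_2$ and the Gagliardo seminorm, symmetric decreasing rearrangement does not decrease $W$. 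I would therefore take a maximizing sequence $u_n\geqslant 0$, radial and nonincreasing. The two invariances let me normalize $\|u_n\|_{L^2}=1$ and $\|u_n\|_{D^{1,2}}=1$. Keeping the third norm $\|u_n\|_{D^{s,2}}$ bounded is the delicate point, discussed next.

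The main obstacle is compactness: the two scalings only fix two of the three quantities. By the interpolation in Lemma \ref{Lemma2.1}, $\|u_n\|_{D^{s,2}}\leqslant 1$ automatically, so boundedness in $H^1$ is fine. The danger is instead $\|u_n\|_{D^{s,2}}\to0$. I will rule this out as follows. If $\|u_n\|_{D^{s,2}}\to 0$, then \eqref{4.3} gives $\|u_n\|_{L^{2+4s/N}}\to0$, while $\|u_n\|_{L^{2+4/N}}$ stays bounded by \eqref{4.2}. The H\"older split from Lemma \ref{Lemma4.1} then gives $\|u_n\|_p^p\leqslant C\|u_n\|_{D^{s,2}}^{\alpha}$. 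This is only an upper bound of the same order as the denominator, so it does not by itself contradict $W(u_n)\to C_{1,s,p}$. The sharper route is to use radial compactness: by Strauss, $H^1_{rad}\hookrightarrow L^{q}$ is compact for $2<q<2^*$. After passing to a subsequence, $u_n\rightharpoonup Q$ in $H^1$ and $u_n\to Q$ in $L^p$, $L^{2+4s/N}$ and $L^{2+4/N}$. If $\|u_n\|_{D^{s,2}}\to0$, then $Q=0$ by weak lower semicontinuity (the seminorm of $Q$ vanishes and $Q\in L^2$). Hence $\|u_n\|_p\to0$, and writing $d_n:=\|u_n\|_{D^{s,2}}$ I would then need $\|u_n\|_p^p=o(d_n^{\alpha})$. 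This requires care. I would try an alternative normalization instead: fix $\|u_n\|_{D^{s,2}}=1$ and $\|u_n\|_{L^2}=1$, so that $\|\nabla u_n\|_2\geqslant1$ by Lemma \ref{Lemma2.1}. Then show that $\|\nabla u_n\|_2\to\infty$ would force $W(u_n)\to 0$, using the bound $\|u\|_p^p\leqslant C\|u\|_{D^{s,2}}^{\theta}\|u\|_2^{p-\theta}$ from \eqref{2.2} (with $\theta=\frac{N(p-2)}{2s}$, valid when $p<\frac{2N}{N-2s}$, and otherwise via \eqref{2.1}). With $d_n=1$ this gives $W(u_n)\leqslant C\|\nabla u_n\|_2^{-\beta}\to0$, a contradiction with $W(u_n)\to C_{1,s,p}>0$. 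So in this normalization all three norms are bounded above and below, which is the key estimate.

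With the three norms bounded above and away from zero, radial compactness gives $u_n\to Q$ in $L^p$ and $\|Q\|_p^p=\lim\|u_n\|_p^p>0$, since $W(u_n)$ is bounded below by a positive constant. Weak lower semicontinuity gives $\|Q\|_{D^{1,2}}\leqslant\liminf$, $\|Q\|_{D^{s,2}}\leqslant\liminf$ and $\|Q\|_2\leqslant 1$. Hence $W(Q)\geqslant \lim W(u_n)=C_{1,s,p}$, so $Q\geqslant0$ is a radial optimizer, which gives the formula for $C_{1,s,p}$. Finally I would derive the Euler--Lagrange equation of $\log W$ at $Q$:
\begin{equation*}
\frac{p}{\|Q\|_p^p}|Q|^{p-2}Q=\frac{2\alpha}{2\|Q\|_{D^{s,2}}^{2}}(-\Delta)^sQ\cdot 2+\frac{2\beta}{2\|Q\|_{D^{1,2}}^2}(-\Delta Q)+\frac{2(p-2)}{2\|Q\|_2^2}Q,
\end{equation*}
where the constants account for the paper's normalization of the Gagliardo seminorm. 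Using the scalings $Q\mapsto\mu Q(\lambda\cdot)$, I would choose $\mu,\lambda$ so that the coefficients of $-\Delta Q$, $(-\Delta)^sQ$ and $|Q|^{p-2}Q$ are equal. The amplitude fixes one ratio and the dilation fixes the other, since the two operators scale differently ($\lambda^{2}$ versus $\lambda^{2s}$). This yields \eqref{4.4} with $\omega$ proportional to $(p-2)/\|Q\|_2^2>0$, and the rescaled function is still an optimizer.
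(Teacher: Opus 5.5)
Your approach is the paper's, up to an equivalent normalization. The paper fixes $\|\nabla w_n\|_2=\|w_n\|_2=1$ and uses \eqref{2.2} with $q=p$ to bound $\|w_n\|_{L^p}$ from below. You fix $\|u_n\|_{D^{s,2}}=\|u_n\|_2=1$ and use the same inequality to bound $\|\nabla u_n\|_2$ from above. Under the dilation linking the two normalizations, $\|w_n\|_{D^{s,2}}\to0$ corresponds exactly to $\|\nabla u_n\|_2\to\infty$. Both arguments work because the $D^{s,2}$-exponent $\theta=\frac{N(p-2)}{2s}$ in \eqref{2.2} exceeds $\alpha$ precisely when $p>2+\frac{4s}{N}$. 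In particular, the bound $\|u_n\|_p^p=o(d_n^{\alpha})$ that you said ``requires care'' in your first normalization is immediate: \eqref{2.2} gives $\|u_n\|_p^p\leqslant C d_n^{\theta}$ with $\theta>\alpha$. This is the paper's non-vanishing step.

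\textbf{The step that fails} is your fallback ``otherwise via \eqref{2.1}'' when $p\geqslant\frac{2N}{N-2s}$. This range is not vacuous: it meets $(2+\frac{4s}{N},2+\frac{4}{N})$ whenever $s<\frac{N}{N+2}$. With $\|u\|_{D^{s,2}}=\|u\|_2=1$, \eqref{2.1} yields
\[
W(u)\leqslant C\|\nabla u\|_2^{\frac{N(p-2)}{2}-\beta},\qquad \frac{N(p-2)}{2}-\beta=\frac{s(4-N(p-2))}{2(1-s)}>0 .
\]
So the bound grows rather than forcing $W(u_n)\to0$. A gradient-only inequality has $D^{s,2}$-exponent $0<\alpha$, which is on the wrong side.

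What you need is some inequality $\|u\|_p^p\leqslant C\|u\|_{D^{s,2}}^{a}\|\nabla u\|_2^{b}\|u\|_2^{p-a-b}$ with $a>\alpha$, equivalently $b<\beta$. For $p\geqslant\frac{2N}{N-2s}$, set $\sigma=N(\frac12-\frac1p)\in[s,1)$. Combine the Sobolev embedding $\dot H^{\sigma}\hookrightarrow L^{p}$ with the Fourier-side H\"older interpolation used in Lemma \ref{Lemma2.1}. This gives $\|u\|_p^p\leqslant C\|u\|_{D^{s,2}}^{p(1-\tau)}\|\nabla u\|_2^{p\tau}$ with $\tau=\frac{\sigma-s}{1-s}$ and $p\tau=\beta-\frac{s(p-2)}{1-s}$. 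Hence in your normalization $W(u_n)\leqslant C\|\nabla u_n\|_2^{-\frac{s(p-2)}{1-s}}\to0$, as required.

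The paper itself applies \eqref{2.2} with $q=p$ without checking $p<\frac{2N}{N-2s}$, so it has the same gap. You correctly spotted the restriction, but your proposed patch does not close it.

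The rest of your argument is sound: radial compactness, lower semicontinuity, the Euler--Lagrange equation, and the two-parameter rescaling that equalizes the coefficients. The stray factors of $2$ in your Euler--Lagrange display are harmless, since only the positivity of the coefficients matters.
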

\begin{proof}
{\bf Step 1.}
In this step, we study the existence of an optimizer for the best constant $C_{1,s,p}$.
We define the Weinstein functional
\begin{equation*}
\begin{aligned}
W_{p}(u)=
\frac{
\|u\|_{D^{s,2}(\mathbb{R}^{N})}^{\frac{2N+4-pN}{2(1-s)}}
\|u\|_{D^{1,2}(\mathbb{R}^{N})}^{\frac{pN-2N-4s}{2(1-s)}}
\|u\|_{L^{2}(\mathbb{R}^{N})}^{p-2}
}
{\|u\|_{L^{p}(\mathbb{R}^{N})}^{p}},
\end{aligned}
\end{equation*}
so that
\begin{equation*}
\begin{aligned}
C_{1,s,p}^{-1}=\inf_{u\in H^{1}(\mathbb{R}^{N})\backslash \{0\}}W_{p}(u).
\end{aligned}
\end{equation*}
From \eqref{4.1}, we know that there exists a minimizing sequence $\{u_{n}\}\in H^{1}(\mathbb{R}^{N})$ such that
\begin{equation*}
\begin{aligned}
\lim_{n\to\infty}
W_{p}(u_{n})
=C_{1,s,p}^{-1}.
\end{aligned}
\end{equation*}
It is straightforward that $\{v_{n}:=|u_{n}|\}$ is also a minimizing sequence.
Denote by $v_{n}^{*}$ the symmetric decreasing rearrangement of $v_{n}$.
By the rearrangement inequalities \cite[Theorems 3.4 and 3.7]{Lieb2001} and \cite{Burchard-Hajaiej2006JFA,Hajaiej-Stuart2003PLMS,Hajaiej2005Edinburgh},
we have that:
\begin{equation}\label{4.5}
\begin{aligned}
\int_{\mathbb{R}^{N}}
|\nabla v_{n}^{*}|^{2}
\mathrm{d}x
\leqslant
\int_{\mathbb{R}^{N}}
|\nabla v_{n}|^{2}
\mathrm{d}x,
\end{aligned}
\end{equation}
and
\begin{equation}\label{4.6}
\begin{aligned}
\int_{\mathbb{R}^{N}}
|v_{n}^{*}|^{p}
\mathrm{d}x
=
\int_{\mathbb{R}^{N}}
|v_{n}|^{p}
\mathrm{d}x,
\end{aligned}
\end{equation}
and
\begin{equation}\label{4.7}
\begin{aligned}
\int_{\mathbb{R}^{N}}
\int_{\mathbb{R}^{N}}
\frac{|v_{n}^{*}(x)-v_{n}^{*}(y)|^{2}}{|x-y|^{N+2s}}
\mathrm{d}x
\mathrm{d}y
\leqslant
\int_{\mathbb{R}^{N}}
\int_{\mathbb{R}^{N}}
\frac{|v_{n}(x)-v_{n}(y)|^{2}}{|x-y|^{N+2s}}
\mathrm{d}x
\mathrm{d}y.
\end{aligned}
\end{equation}
Combining \eqref{4.5}-\eqref{4.7},
we get that $\{v_{n}^{*}\}$
is also a minimizing sequence.

We now rescale the sequence $\{v_{n}^{*}\}$ by setting
\begin{equation*}
\begin{aligned}
w_{n}(x)=\lambda_{1,n}v_{n}^{*}(\lambda_{2,n}x),
\end{aligned}
\end{equation*}
where
\begin{equation*}
\begin{aligned}
\lambda_{1,n}
=\frac{\left[
\int_{\mathbb{R}^{N}}|v_{n}^{*}|^{2}\mathrm{d}x
\right]^{\frac{N-2}{4}}}{\left[
\int_{\mathbb{R}^{N}}|\nabla v_{n}^{*}|\mathrm{d}x
\right]^{\frac{N}{4}}}
~~\mathrm{and}~~
\lambda_{2,n}
=
\left[
\frac{
\int_{\mathbb{R}^{N}}|v_{n}^{*}|^{2}\mathrm{d}x
}
{\int_{\mathbb{R}^{N}}|\nabla v_{n}^{*}|\mathrm{d}x}
\right]^{\frac{1}{2}}.
\end{aligned}
\end{equation*}
Then

\begin{equation*}
\begin{aligned}
\int_{\mathbb{R}^{N}}|\nabla w_{n}|^{2}\mathrm{d}x
=
\lambda_{1,n}^{2}
\lambda_{2,n}^{2-N}
\int_{\mathbb{R}^{N}}|\nabla v_{n}^{*}|^{2}\mathrm{d}x=1,
\end{aligned}
\end{equation*}
and
\begin{equation*}
\begin{aligned}
\int_{\mathbb{R}^{N}}|w_{n}|^{2}\mathrm{d}x
=
\lambda_{1,n}^{2}\lambda_{2,n}^{-N}
\int_{\mathbb{R}^{N}}|v_{n}^{*}|^{2}\mathrm{d}x=1,
\end{aligned}
\end{equation*}
and
\begin{equation*}
\begin{aligned}
\int_{\mathbb{R}^{N}}\int_{\mathbb{R}^{N}}\frac{|w_{n}(x)-w_{n}(y)|^{2}}{|x-y|^{N+2s}}\mathrm{d}x\mathrm{d}y
=&
\lambda_{1,n}^{2}
\lambda_{2,n}^{2s-N}
\int_{\mathbb{R}^{N}}\int_{\mathbb{R}^{N}}\frac{|v_{n}^{*}(x)-v_{n}^{*}(y)|^{2}}{|x-y|^{N+2s}}\mathrm{d}x\mathrm{d}y,
\end{aligned}
\end{equation*}
and
\begin{equation*}
\begin{aligned}
\int_{\mathbb{R}^{N}}|w_{n}|^{p}\mathrm{d}x
=\lambda_{1,n}^{p}\lambda_{2,n}^{-N}
\int_{\mathbb{R}^{N}}|v_{n}^{*}|^{p}\mathrm{d}x.
\end{aligned}
\end{equation*}
The Weinstein functional $W_{p}(\cdot)$ is invariant under rescale
\begin{equation*}
\begin{aligned}
W_{p}(w_{n})
=&\frac{
\|w_{n}\|_{D^{s,2}(\mathbb{R}^{N})}^{\frac{2N+4-pN}{2(1-s)}}
\|w_{n}\|_{D^{1,2}(\mathbb{R}^{N})}^{\frac{pN-2N-4s}{2(1-s)}}
\|w_{n}\|_{L^{2}(\mathbb{R}^{N})}^{p-2}
}
{\|w_{n}\|_{L^{p}(\mathbb{R}^{N})}^{p}}\\
=&
\frac{
[\lambda_{1,n}^{2}
\lambda_{2,n}^{2s-N}]^{\frac{2N+4-pN}{4(1-s)}}
[\lambda_{1,n}^{2}
\lambda_{2,n}^{2-N}]^{\frac{pN-2N-4s}{4(1-s)}}
[\lambda_{1,n}^{2}\lambda_{2,n}^{-N}]^{\frac{p-2}{2}}
}
{\lambda_{1,n}^{p}\lambda_{2,n}^{-N}}
W_{p}(v_{n}^{*})\\
=&W_{p}(v_{n}^{*}).
\end{aligned}
\end{equation*}
Clearly, $\{w_{n}\}$ is a radial and radially decreasing minimizing sequence which is bounded in  $H^{1}(\mathbb{R}^{N})$.

By virtue of $\|w_{n}\|_{D^{1,2}(\mathbb{R}^{N})}^{2}=\|w_{n}\|_{L^{2}(\mathbb{R}^{N})}^{2}=1$, $0<C_{1,s,p}^{-1}<\infty$ and \eqref{2.2} that
\begin{equation}\label{Nonvanishing}
\begin{aligned}
C_{1,s,p}^{-1}
=&\lim_{n\to\infty}
\frac{
\|w_{n}\|_{D^{s,2}(\mathbb{R}^{N})}^{\frac{2N+4-pN}{2(1-s)}}
\|w_{n}\|_{D^{1,2}(\mathbb{R}^{N})}^{\frac{pN-2N-4s}{2(1-s)}}
\|w_{n}\|_{L^{2}(\mathbb{R}^{N})}^{p-2}
}
{\|w_{n}\|_{L^{p}(\mathbb{R}^{N})}^{p}}\\
=&
\lim_{n\to\infty}
\frac{
\|w_{n}\|_{D^{s,2}(\mathbb{R}^{N})}^{\frac{N(p-2)}{2s}\left[(\frac{4}{N(p-2)}-1)
\frac{s}{1-s}\right]}
\|w_{n}\|_{L^{2}(\mathbb{R}^{N})}^{\left[p-\frac{N(p-2)}{2s}\right]\left[(\frac{4}{N(p-2)}-1)
\frac{s}{1-s}\right]}
}
{\|w_{n}\|_{L^{p}(\mathbb{R}^{N})}^{p}}\\
\geqslant&C
\lim_{n\to\infty}
\frac{
\|w_{n}\|_{L^{p}(\mathbb{R}^{N})}^{p\left[(\frac{4}{N(p-2)}-1)
\frac{s}{1-s}\right]}}
{\|w_{n}\|_{L^{p}(\mathbb{R}^{N})}^{p}}\\
=&C\lim_{n\to\infty}\|w_{n}\|_{L^{p}(\mathbb{R}^{N})}^{p\left[(\frac{4}{N(p-2)}-1)
\frac{s}{1-s}-1\right]},
\end{aligned}
\tag{$Nonvanishing$}
\end{equation}
which shows, by $0<(\frac{4}{N(p-2)}-1), that
\frac{s}{1-s}<1$,
\begin{equation}\label{4.8}
\begin{aligned}
0<[CC_{1,s,p}]^{\frac{1}{1-(\frac{4}{N(p-2)}-1)
\frac{s}{1-s}}}\leqslant\lim\limits_{n\to\infty}\|w_{n}\|_{L^{p}(\mathbb{R}^{N})}^{p}.
\end{aligned}
\end{equation}
From the compact embedding $H^{1}_{rad}(\mathbb{R}^{N})\hookrightarrow\hookrightarrow L^{q}(\mathbb{R}^{N})$,
$q\in(2,2^{*})$ \cite{Willem1996Book},
and \eqref{4.8},
we deduce that
\begin{equation*}
\begin{aligned}
\lim\limits_{n\to\infty}\|w_{n}\|_{L^{p}(\mathbb{R}^{N})}^{p}
= \|w\|_{L^{p}(\mathbb{R}^{N})}^{p}\not=0.
\end{aligned}
\end{equation*}
It follows from the Br\'{e}zis-Lieb Lemma \cite{Brezis-Lieb1983PAMS} and $\|w\|_{L^{p}(\mathbb{R}^{N})}^{p}\not=0$ that
\begin{equation*}
\begin{aligned}
C_{1,s,p}^{-1}
=&\lim_{n\to\infty}
\frac
{
\|w_{n}\|_{D^{s,2}(\mathbb{R}^{N})}^{\frac{2N+4-pN}{2(1-s)}}
\|w_{n}\|_{D^{1,2}(\mathbb{R}^{N})}^{\frac{pN-2N-4s}{2(1-s)}}
\|w_{n}\|_{L^{2}(\mathbb{R}^{N})}^{p-2}
}
{\|w_{n}\|_{L^{p}(\mathbb{R}^{N})}^{p}}\\
\geqslant&
\frac
{
\|w\|_{D^{s,2}(\mathbb{R}^{N})}^{\frac{2N+4-pN}{2(1-s)}}
\|w\|_{D^{1,2}(\mathbb{R}^{N})}^{\frac{pN-2N-4s}{2(1-s)}}
\|w\|_{L^{2}(\mathbb{R}^{N})}^{p-2}
}
{\|w\|_{L^{p}(\mathbb{R}^{N})}^{p}}\\
\geqslant&
C_{1,s,p}^{-1}.
\end{aligned}
\end{equation*}
Thus,
\begin{equation*}
\begin{aligned}
\|w\|_{D^{1,2}(\mathbb{R}^{N})}^{2}=1,~ \int_{\mathbb{R}^{2}}|w|^{2}\mathrm{d}x=1,~
W_{p}(w)=C_{1,s,p}^{-1}.
\end{aligned}
\end{equation*}
Therefore, $w$ is a radial non-negative minimizer for the Weinstein functional $W_{p}$.

{\bf Step 2.} Note that $w$ is a minimizer of $W_{p}$. Then it satisfies
\begin{equation*}
\begin{aligned}
\left.\frac{\mathrm{d}}{\mathrm{d}\varepsilon}\right|_{\varepsilon=0}W_{p}(w+\varepsilon \phi)=0,
\end{aligned}
\end{equation*}
for all $\phi\in C^{\infty}_{0}(\mathbb{R}^{N})$. Thus, $w$ is a weak solution to the following equation
\begin{equation*}
\begin{aligned}
&-\Delta w
+\frac{2N+4-pN}{pN-2N-4s}
(-\Delta)^{s}w
\frac{1}{\|w\|_{D^{s,2}(\mathbb{R}^{N})}^{2}}
+\frac{2(p-2)(1-s)}{pN-2N-4s}w\\
=&\frac{2p(1-s)}{pN-2N-4s}|w|^{p-2}w
\frac{1}
{\|w\|_{L^{p}(\mathbb{R}^{N})}^{p}}.
\end{aligned}
\end{equation*}
We define
\begin{equation}\label{4.9}
\begin{aligned}
Q(x)=\lambda w(\mu x),
\end{aligned}
\end{equation}
where
\begin{equation*}
\begin{aligned}
\begin{cases}
\mu=\left[\frac{2N+4-pN}{pN-2N-4s}
\|w\|_{D^{s,2}(\mathbb{R}^{N})}^{-2}
\right]^{\frac{1}{2s-2}},\\
\lambda
=
\left[\frac{2p(1-s)}{pN-2N-4s}
\|w\|_{L^{p}(\mathbb{R}^{N})}^{-p}\right]^{\frac{1}{p-2}}
\left[\frac{2N+4-pN}{pN-2N-4s}
\|w\|_{D^{s,2}(\mathbb{R}^{N})}^{-2}
\right]^{^{\frac{2}{(2s-2)(p-2)}}}.
\end{cases}
\end{aligned}
\end{equation*}
Then $Q$ satisfies
\begin{equation*}
\begin{aligned}
-\Delta Q
+
(-\Delta)^{s}Q
+\omega Q
=|Q|^{p-2}Q,
\end{aligned}
\end{equation*}
with
\begin{equation*}
\begin{aligned}
\omega
=\frac{pN-2N-4s}{2(p-2)(1-s)}
\left[
\frac{2N+4-pN}{pN-2N-4s}
\|w\|_{D^{s,2}(\mathbb{R}^{N})}^{-2}
\right]^{\frac{-1}{s-1}}.
\end{aligned}
\end{equation*}
Particularly, we know that $Q$ is non-negative radial and radially symmetric.
\end{proof}

\begin{remark}
In the Weinstein functional,
if the numerator has two terms,
then one can get the non-vanishing of the minimizing sequence in a straightforward way \cite{Weinstein1983CMP}.
However,
in our case there are three terms in the numerator of our $W_{p}(\cdot)$.
It is hard to rule out the vanishing of minimizing sequence.
We overcome this challenge using an innovative and novel idea,
see \eqref{Nonvanishing}.
\end{remark}

\section{Proof of Theorem \ref{Theorem1.3}}
In this section,
we present the proof of Theorem \ref{Theorem1.3}.
\begin{lemma}\label{Lemma5.1}
Let $N\geqslant 3$, $s\in(0,1)$ and $p \in(2+\frac{4s}{N},2+\frac{4}{N})$.
Set
\begin{equation*}
\begin{aligned}
\beta(Q)=\frac{\int_{\mathbb{R}^{N}}\int_{\mathbb{R}^{N}}\frac{|Q(x)-Q(y)|^{2}}{|x-y|^{N+2s}}\mathrm{d}x\mathrm{d}y}
{\int_{\mathbb{R}^{N}}|\nabla Q|^{2}\mathrm{d}x},
\end{aligned}
\end{equation*}
where $Q$ is defined in Lemma \ref{Lemma4.2}.
Then
\begin{equation*}
\begin{aligned}
\beta(Q)=\frac{2N+4-pN}{pN-2N-4s}.
\end{aligned}
\end{equation*}
\end{lemma}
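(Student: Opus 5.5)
The identity for $\beta(Q)$ is a first-order optimality condition for the Weinstein functional $W_p$ of Lemma \ref{Lemma4.2}, tested along a one-parameter family of dilations. These dilations keep the $L^2$ and $L^p$ norms fixed but move the ratio between the $D^{s,2}$ and $D^{1,2}$ seminorms. Since $Q(x)=\lambda w(\mu x)$, and since both $W_p$ and $\beta$ behave predictably under scaling, I will first reduce to showing that $Q$ minimizes $W_p$. Indeed $W_p$ is invariant under $u\mapsto \lambda u(\mu\cdot)$, as computed in Step 1 of the proof of Lemma \ref{Lemma4.2}, so $W_p(Q)=W_p(w)=C_{1,s,p}^{-1}$ and $Q$ is itself a minimizer.

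For $t>0$ set
\[
Q_t(x):=t^{N/2}Q(tx).
\]
Then $\|Q_t\|_{L^2}=\|Q\|_{L^2}$ and $\|Q_t\|_{D^{1,2}}^2=t^2\|Q\|_{D^{1,2}}^2$. For the other two terms,
\[
\|Q_t\|_{D^{s,2}}^2=t^{2s}\|Q\|_{D^{s,2}}^2,\qquad \|Q_t\|_{L^p}^p=t^{\frac{N(p-2)}{2}}\|Q\|_{L^p}^p .
\]
Substituting into $W_p$, the $t$-exponent of the numerator is
\[
s\cdot\frac{2N+4-pN}{2(1-s)}+\frac{pN-2N-4s}{2(1-s)}=\frac{N(p-2)}{2},
\]
which cancels the denominator exactly. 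So this naive $L^2$-preserving dilation leaves $W_p$ invariant and gives no information. I need a deformation that changes the two seminorms independently.

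A cleaner route is the Euler–Lagrange equation already derived in Step 2, combined with a Pohozaev identity. Testing the equation \eqref{4.4} for $Q$ against $Q$ gives the Nehari identity
\[
A+B+\omega C=D,
\]
where $A=\|Q\|_{D^{1,2}}^2$, $B=\|Q\|_{D^{s,2}}^2$, $C=\|Q\|_{L^2}^2$ and $D=\|Q\|_{L^p}^p$. The Pohozaev identity, obtained from the dilation $Q(x/\tau)$ and justified for this radial $H^1$ solution by the regularity results cited in the introduction, reads
\[
\frac{N-2}{2}A+\frac{N-2s}{2}B+\frac{N\omega}{2}C=\frac{N}{p}D.
\]
These are two relations among four quantities. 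The third relation comes from the explicit form of the Euler–Lagrange equation for $w$ in Step 2: its coefficients encode the stationarity of $W_p$ in the directions $u$, $u(\mu\cdot)$ and $\lambda u$ separately. Equivalently, and more directly, I can use $\partial_\varepsilon W_p(Q+\varepsilon\phi)=0$ together with the equation $-\Delta Q+(-\Delta)^sQ+\omega Q=|Q|^{p-2}Q$. Write the Euler–Lagrange equation of $\log W_p$ as
\[
\alpha_1\frac{-\Delta Q}{A}+\alpha_2\frac{(-\Delta)^sQ}{B}+(p-2)\frac{Q}{C}=p\,\frac{|Q|^{p-2}Q}{D},
\]
with $\alpha_1=\frac{pN-2N-4s}{2(1-s)}$ and $\alpha_2=\frac{2N+4-pN}{2(1-s)}$. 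Because $Q$ also satisfies the PDE with all coefficients equal to $1$, the two equations must be proportional. The operators $-\Delta$, $(-\Delta)^s$ and the identity act independently on the nontrivial function $Q$: distinct Fourier symbols $|\xi|^2$, $|\xi|^{2s}$, $1$, and the map $Q\mapsto |Q|^{p-2}Q$ is fixed. Comparing coefficients then gives
\[
\frac{\alpha_1}{A}=\frac{\alpha_2}{B}=\frac{p}{D}\quad\Longrightarrow\quad \beta(Q)=\frac{B}{A}=\frac{\alpha_2}{\alpha_1}=\frac{2N+4-pN}{pN-2N-4s}.
\]
The proportionality step is the main obstacle. To make it rigorous without a linear-independence argument, I will instead use the choice of $\mu,\lambda$ in \eqref{4.9}: carry the coefficients of the Step 2 equation through the scaling and check directly that the normalization producing coefficient $1$ on both $-\Delta Q$ and $(-\Delta)^sQ$ forces exactly $B/A=\alpha_2/\alpha_1$. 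This amounts to tracking powers of $\mu$ and $\lambda$, using $\|w\|_{D^{1,2}}=1$.

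Concretely, $\|Q\|_{D^{1,2}}^2=\lambda^2\mu^{2-N}$ and $\|Q\|_{D^{s,2}}^2=\lambda^2\mu^{2s-N}\|w\|_{D^{s,2}}^2$. Hence
\[
\beta(Q)=\mu^{2s-2}\|w\|_{D^{s,2}}^2 .
\]
By the definition of $\mu$ in \eqref{4.9}, $\mu^{2s-2}=\frac{2N+4-pN}{pN-2N-4s}\|w\|_{D^{s,2}}^{-2}$. Substituting gives $\beta(Q)=\frac{2N+4-pN}{pN-2N-4s}$ immediately. I expect this direct computation to be the actual proof, and the remaining care lies only in confirming the exponent $2s-2$ and that $\|w\|_{D^{1,2}}=1$ from Step 1.
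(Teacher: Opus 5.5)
Your final computation is exactly the paper's proof, and it is correct: from $Q=\lambda w(\mu\cdot)$ and $\|w\|_{D^{1,2}(\mathbb{R}^{N})}=1$ you get $\beta(Q)=\mu^{2s-2}\|w\|_{D^{s,2}(\mathbb{R}^{N})}^{2}$, and substituting the definition of $\mu$ from \eqref{4.9} gives $\frac{2N+4-pN}{pN-2N-4s}$. The preliminary detours (the $L^2$-preserving dilation, the Nehari--Pohozaev relations, and the coefficient comparison) are not needed and can be dropped.
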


\begin{proof}
Using \eqref{4.9}, we compute
\begin{equation*}
\begin{aligned}
\beta(Q)
=&\frac{\int_{\mathbb{R}^{N}}\int_{\mathbb{R}^{N}}\frac{|Q(x)-Q(y)|^{2}}{|x-y|^{N+2s}}\mathrm{d}x\mathrm{d}y}
{\int_{\mathbb{R}^{N}}|\nabla Q|^{2}\mathrm{d}x}\\
=&\frac{\lambda^{2}
\mu^{2s-N}\int_{\mathbb{R}^{N}}\int_{\mathbb{R}^{N}}\frac{|w(x)-w(y)|^{2}}{|x-y|^{N+2s}}\mathrm{d}x\mathrm{d}y}
{\lambda^{2}
\mu^{2-N} \int_{\mathbb{R}^{N}}|\nabla w|^{2}\mathrm{d}x}\\
=&
\mu^{2s-2}
\frac{\int_{\mathbb{R}^{N}}\int_{\mathbb{R}^{N}}\frac{|w(x)-w(y)|^{2}}{|x-y|^{N+2s}}\mathrm{d}x\mathrm{d}y}
{\int_{\mathbb{R}^{N}}|\nabla w|^{2}\mathrm{d}x}.
\end{aligned}
\end{equation*}
Setting $\int_{\mathbb{R}^{N}}|\nabla w|\mathrm{d}x=1$ and $\mu=\left[\frac{2N+4-pN}{pN-2N-4s}
\|w\|_{D^{s,2}(\mathbb{R}^{N})}^{-2}
\right]^{\frac{1}{2s-2}}$ into the above equation, we infer that
\begin{equation*}
\begin{aligned}
\beta(Q)
=&
\mu^{2s-2}
\|w\|_{D^{s,2}(\mathbb{R}^{N})}^{2}\\
=&
\left[\frac{2N+4-pN}{pN-2N-4s}
\|w\|_{D^{s,2}(\mathbb{R}^{N})}^{-2}
\right]
\|w\|_{D^{s,2}(\mathbb{R}^{N})}^{2}\\
=&\frac{2N+4-pN}{pN-2N-4s}.
\end{aligned}
\end{equation*}
\end{proof}

\begin{lemma}\label{Lemma5.2}
Let $N\geqslant 3$, $s\in(0,1)$ and $p \in(2+\frac{4s}{N},2+\frac{4}{N})$.
Set
\begin{equation*}
\begin{aligned}
\gamma(Q)
=&\frac{\int_{\mathbb{R}^{N}}\int_{\mathbb{R}^{N}}\frac{|Q(x)-Q(y)|^{2}}{|x-y|^{N+2s}}\mathrm{d}x\mathrm{d}y}
{\int_{\mathbb{R}^{N}}|Q|^{p}\mathrm{d}x},
\end{aligned}
\end{equation*}
where $Q$ is defined in Lemma \ref{Lemma4.2}.
Then
\begin{equation*}
\begin{aligned}
\gamma(Q)=\frac{2N+4-pN}{2p(1-s)}.
\end{aligned}
\end{equation*}
And,
for $s\in(0,1)$, we have
\begin{equation*}
\begin{aligned}
\|Q\|^{2}_{D^{1,2}(\mathbb{R}^{N})}
=\frac{pN-2N-4s}{2p(1-s)}\int_{\mathbb{R}^{N}}|Q|^{p}\mathrm{d}x,
\end{aligned}
\end{equation*}
and
\begin{equation*}
\begin{aligned}
\|Q\|^{2}_{D^{s,2}(\mathbb{R}^{N})}
=\frac{2N+4-pN}{2p(1-s)}
\int_{\mathbb{R}^{N}}|Q|^{p}\mathrm{d}x.
\end{aligned}
\end{equation*}
\end{lemma}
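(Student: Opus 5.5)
The plan is to combine three pieces of information about $Q$: the ratio $\beta(Q)$ from Lemma \ref{Lemma5.1}, the equation \eqref{4.4} satisfied by $Q$, and a Pohozaev-type identity obtained from the mass-preserving dilation. Write
\[
A=\|Q\|^{2}_{D^{1,2}(\mathbb{R}^{N})},\qquad B=\|Q\|^{2}_{D^{s,2}(\mathbb{R}^{N})},\qquad P=\int_{\mathbb{R}^{N}}|Q|^{p}\,\mathrm{d}x .
\]
Lemma \ref{Lemma5.1} gives $B=\beta A$ with $\beta=\frac{2N+4-pN}{pN-2N-4s}$. It therefore suffices to find one more linear relation between $A$, $B$ and $P$ that does not involve $\omega$. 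The Nehari identity, obtained by testing \eqref{4.4} with $Q$, reads $A+B+\omega\|Q\|_{L^2}^2=P$. It contains $\omega$, so it is not the relation we want. (For $w$ it only reproduces a consistency check, since it is built into the definition of $\lambda,\mu$.)

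The missing relation is the Pohozaev identity. Consider $Q_t(x)=t^{N/2}Q(tx)$, which preserves the $L^2$ norm. We have
\[
J(Q_t)+\tfrac{\omega}{2}\|Q_t\|_{L^2}^2=\tfrac{t^{2}}{2}A+\tfrac{t^{2s}}{2}B-\tfrac{t^{N(p-2)/2}}{p}P+\tfrac{\omega}{2}\|Q\|_{L^2}^2 .
\]
Since $Q$ is a critical point of $J+\frac{\omega}{2}\|\cdot\|_{L^2}^2$ on $H^{1}(\mathbb{R}^{N})$, the $t$-derivative at $t=1$ must vanish. This gives
\[
A+sB=\frac{N(p-2)}{2p}\,P .
\]

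Next I insert $B=\beta A$ and compute
\[
1+s\beta=\frac{(pN-2N-4s)+s(2N+4-pN)}{pN-2N-4s}=\frac{N(p-2)(1-s)}{pN-2N-4s}.
\]
Hence
\[
A=\frac{N(p-2)}{2p}\cdot\frac{pN-2N-4s}{N(p-2)(1-s)}\,P=\frac{pN-2N-4s}{2p(1-s)}\,P,
\qquad
B=\beta A=\frac{2N+4-pN}{2p(1-s)}\,P .
\]
Dividing the second identity by $P$ gives $\gamma(Q)=\frac{2N+4-pN}{2p(1-s)}$, and the three claims follow.

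The main obstacle is justifying the Pohozaev identity rigorously. The curve $t\mapsto Q_t$ is differentiable in $H^1$ only if $x\cdot\nabla Q\in H^1$, so the formal chain rule cannot be applied directly. I would first establish enough regularity and decay of $Q$. Since $Q$ is radial, nonnegative and radially decreasing, and solves $-\Delta Q+(-\Delta)^sQ+\omega Q=Q^{p-1}$ with $p$ subcritical, a bootstrap in Fourier variables (the symbol is $|\xi|^2+|\xi|^{2s}+\omega$) gives $Q\in H^2\cap L^\infty$. I would then test the equation with $\varphi_R(x)\,x\cdot\nabla Q$, where $\varphi_R$ is a cutoff, and let $R\to\infty$. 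For the nonlocal term I would work on the Fourier side: $\int|\xi|^{2s}\hat Q\,\overline{\widehat{x\cdot\nabla Q}}\,\mathrm{d}\xi=\frac{2s-N}{2}B$ follows from the dilation behaviour of $|\xi|^{2s}$, which avoids commutator estimates with the cutoff.
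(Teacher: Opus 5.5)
Your argument is correct, but it reaches the identities by a different route from the paper.

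\textbf{The paper's route.} The paper uses no Pohozaev identity here. It repeats the scaling computation of Lemma \ref{Lemma5.1}: it writes $\gamma(Q)=\lambda^{2-p}\mu^{2s}\|w\|_{D^{s,2}(\mathbb{R}^{N})}^{2}\|w\|_{L^{p}(\mathbb{R}^{N})}^{-p}$ and inserts the explicit $\lambda,\mu$ from \eqref{4.9}. The powers of $\frac{2N+4-pN}{pN-2N-4s}\|w\|_{D^{s,2}(\mathbb{R}^{N})}^{-2}$ coming from $\lambda^{2-p}$ and $\mu^{2s}$ add up to exponent $1$. The norms of $w$ then cancel, leaving $\gamma(Q)=\frac{2N+4-pN}{2p(1-s)}$. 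The $D^{1,2}$ identity follows from $\|Q\|^{2}_{D^{1,2}(\mathbb{R}^{N})}=\gamma(Q)\beta(Q)^{-1}\int_{\mathbb{R}^{N}}|Q|^{p}\mathrm{d}x$.

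In effect, the Euler--Lagrange equation for $w$ has coefficients $\frac{a}{b}\|w\|_{D^{s,2}}^{-2}$ and $\frac{p}{b}\|w\|_{L^{p}}^{-p}$ in front of $(-\Delta)^{s}w$ and $|w|^{p-2}w$, where $a=\frac{2N+4-pN}{2(1-s)}$ and $b=\frac{pN-2N-4s}{2(1-s)}$. Choosing $\lambda,\mu$ so that these coefficients become $1$ fixes both ratios at once.

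\textbf{Your route.} You keep only the relation from Lemma \ref{Lemma5.1} and get the second linear relation from the dilation (Nehari--Pohozaev) identity $A+sB=\frac{N(p-2)}{2p}P$. Your computation $1+s\beta=\frac{N(p-2)(1-s)}{pN-2N-4s}$ and the resulting values are right. They agree with the paper's because $s a+b=\frac{N(p-2)}{2}$.

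\textbf{Trade-off.} The paper's argument is pure algebra on the construction of Lemma \ref{Lemma4.2} and needs no regularity of $Q$. Yours brings in the Pohozaev identity. Proving it rigorously for $-\Delta+(-\Delta)^{s}$ needs $Q\in H^{2}\cap L^{\infty}$ and enough decay to make $x\cdot\nabla Q$ an admissible test function. Here that decay is only polynomial, because of the nonlocal term, so your justification is considerably more work than this lemma requires.

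In return, your route does not depend on tracking the exponents in $\lambda,\mu$, and it gives a consistency check on the Euler--Lagrange coefficients. Note also that once Lemmas \ref{Lemma5.1}--\ref{Lemma5.2} are established, $J(Q)=0$ in Lemma \ref{Lemma5.3} follows directly from $\frac{2N+4-pN}{2p(1-s)}+\frac{pN-2N-4s}{2p(1-s)}=\frac{2}{p}$, with no Pohozaev identity needed.
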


\begin{proof}
By virtue of \eqref{4.9} again, we deduce that
\begin{equation*}
\begin{aligned}
\gamma(Q)
=&\frac{\lambda^{2}
\mu^{2s-N}\int_{\mathbb{R}^{N}}\int_{\mathbb{R}^{N}}\frac{|w(x)-w(y)|^{2}}{|x-y|^{N+2s}}\mathrm{d}x\mathrm{d}y}
{\lambda^{p}
\mu^{-N}
\int_{\mathbb{R}^{N}}|w|^{p}\mathrm{d}x}\\
=&
\lambda^{2-p}
\mu^{2s}
\frac{\int_{\mathbb{R}^{N}}\int_{\mathbb{R}^{N}}\frac{|w(x)-w(y)|^{2}}{|x-y|^{N+2s}}\mathrm{d}x\mathrm{d}y}
{\int_{\mathbb{R}^{N}}|w|^{p}\mathrm{d}x}\\
=&
\left[\frac{2p(1-s)}{pN-2N-4s}
\|w\|_{L^{p}(\mathbb{R}^{N})}^{-p}\right]^{-1}
\left[\frac{2N+4-pN}{pN-2N-4s}
\|w\|_{D^{s,2}(\mathbb{R}^{N})}^{-2}
\right]^{^{\frac{-2}{2s-2}}}\\
&\times
\left[\frac{2N+4-pN}{pN-2N-4s}
\|w\|_{D^{s,2}(\mathbb{R}^{N})}^{-2}
\right]^{\frac{2s}{2s-2}}
\frac{\int_{\mathbb{R}^{N}}\int_{\mathbb{R}^{N}}\frac{|w(x)-w(y)|^{2}}{|x-y|^{N+2s}}\mathrm{d}x\mathrm{d}y}
{\int_{\mathbb{R}^{N}}|w|^{p}\mathrm{d}x}\\
=&
\frac{pN-2N-4s}{2p(1-s)}\frac{2N+4-pN}{pN-2N-4s}\\
=&\frac{2N+4-pN}{2p(1-s)}.
\end{aligned}
\end{equation*}
Combining $\beta(Q)$ and $\gamma(Q)$, we have the desired result.
\end{proof}

\begin{lemma}\label{Lemma5.3}
Let $N\geqslant 3$, $s\in(0,1)$ and $p \in(2+\frac{4s}{N},2+\frac{4}{N})$.
Then
\begin{equation*}
\begin{aligned}
\frac{1}{2}
\int_{\mathbb{R}^{N}}
|\nabla Q|^{2}
\mathrm{d}x
+\frac{1}{2}
\int_{\mathbb{R}^{N}}
\int_{\mathbb{R}^{N}}
\frac{|Q(x)-Q(y)|^{2}}{|x-y|^{N+2s}}
\mathrm{d}x
\mathrm{d}y
-\frac{1}{p}
\int_{\mathbb{R}^{N}}
|Q|^{p}
\mathrm{d}x=0,
\end{aligned}
\end{equation*}
and
\begin{equation*}
\begin{aligned}
\int_{\mathbb{R}^{N}}
|Q|^{2}
\mathrm{d}x\geqslant c_{0}.
\end{aligned}
\end{equation*}
\end{lemma}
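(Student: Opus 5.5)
The first identity follows directly from Lemma \ref{Lemma5.2}. There we have $\|Q\|^{2}_{D^{1,2}}=\frac{pN-2N-4s}{2p(1-s)}\int|Q|^{p}$ and $\|Q\|^{2}_{D^{s,2}}=\frac{2N+4-pN}{2p(1-s)}\int|Q|^{p}$. Adding the two gives
\begin{equation*}
\|Q\|^{2}_{D^{1,2}(\mathbb{R}^{N})}+\|Q\|^{2}_{D^{s,2}(\mathbb{R}^{N})}=\frac{pN-2N-4s+2N+4-pN}{2p(1-s)}\int_{\mathbb{R}^{N}}|Q|^{p}\mathrm{d}x=\frac{4-4s}{2p(1-s)}\int_{\mathbb{R}^{N}}|Q|^{p}\mathrm{d}x=\frac{2}{p}\int_{\mathbb{R}^{N}}|Q|^{p}\mathrm{d}x .
\end{equation*}
Multiplying by $\tfrac12$ yields $J(Q)=0$. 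Only the algebra of the two coefficients has to be checked.

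For the inequality $\int|Q|^{2}\geqslant c_{0}$, I argue by contradiction using the characterisation of $c_{0}$ from Theorem \ref{Theorem1.1} and its proof. For $c<c_{0}$ we have $m_{c}=0$, and Step 1 of that proof shows $m_{c}$ is not attained. Set $c_{Q}:=\int|Q|^{2}$ and suppose $c_{Q}<c_{0}$. Then $Q\in S_{c_{Q}}$ and $J(Q)=0=m_{c_{Q}}$, so $Q$ attains $m_{c_{Q}}$, contradicting Step 1.

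This relies on knowing $m_{c}=0$ for all $c\leqslant c_{0}$, and hence $m_{c_Q}\le J(Q)=0$ with equality. That $m_{c}\leqslant 0$ holds for every $c$ follows from the dilation $u_{t}=t^{N/2}u(t\cdot)$ with $t\to0$. That $m_{c}\geqslant 0$ for $c<c_{0}$ comes from the definition of $c_{0}$ as the threshold between $m_{c}<0$ and $m_{c}=0$, together with the monotonicity of Lemma \ref{Lemma3.1}. I would state this briefly, since it is implicit in Theorem \ref{Theorem1.1}.

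The main obstacle is exactly this identification $m_{c}=0$ for $c<c_{0}$, which is not stated as a separate lemma. If needed, I would justify it as follows. Suppose $m_{c}<0$ for some $c<c_{0}$. The scaling of Lemma \ref{Lemma3.1} (whose first part uses only $c'>c$ and the factor $(c'/c)^{2b(s-1)}\le1$) gives $m_{c'}<0$ for all $c'>c$, including $c'=c_{0}$. This contradicts $m_{c_{0}}=0$, which was used in Step 3 of Theorem \ref{Theorem1.1}. With that in place, the contradiction argument above completes the proof.
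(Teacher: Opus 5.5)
Your proof is correct. The second half follows the paper, but the first half takes a more direct route.

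For $J(Q)=0$, the paper invokes the Nehari and Pohozaev identities for $Q$ and eliminates $\omega$ to get $\|Q\|_{D^{1,2}}^{2}+s\|Q\|_{D^{s,2}}^{2}=\frac{N(p-2)}{2p}\int|Q|^{p}$. It then uses Lemmas \ref{Lemma5.1}--\ref{Lemma5.2} to show that this relation minus $J(Q)$ vanishes. That verification uses precisely the two relations of Lemma \ref{Lemma5.2}, so the Pohozaev step is redundant. Your summation needs only $\frac{pN-2N-4s}{2p(1-s)}+\frac{2N+4-pN}{2p(1-s)}=\frac{2}{p}$ and gives $J(Q)=0$ without any PDE input. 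Indeed, Lemmas \ref{Lemma5.1}--\ref{Lemma5.2} follow purely from the choice of $\lambda,\mu$ and $\|w\|_{D^{1,2}}=1$. Your route also avoids a Pohozaev identity for the mixed operator that the paper asserts without justifying the regularity and decay of $Q$ it requires. The paper's route buys only a consistency check that the rescaled $Q$ satisfies the Pohozaev identity of the equation.

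For $\int|Q|^{2}\geqslant c_{0}$, the paper just says ``from Theorem \ref{Theorem1.1}''. You correctly supply the missing link $m_{c}=0$ for $c<c_{0}$. You use that the first part of Lemma \ref{Lemma3.1} needs only $c'>c$, together with $m_{c_{0}}=0$, which the paper also takes for granted in Steps 1 and 3 of the proof of Theorem \ref{Theorem1.1}. You could skip that link entirely. If $c_{Q}<c_{0}$, apply the scaling from Step 1 directly to $Q$: since $J(Q)=0$ and $\|Q\|_{D^{s,2}}>0$, it produces an element of $S_{c_{0}}$ with strictly negative energy, contradicting $m_{c_{0}}=0$.
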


\begin{proof}
We know that
$Q$ satisifies the following Nehari identity
\begin{equation*}
\begin{aligned}
\int_{\mathbb{R}^{N}}
|\nabla Q|^{2}
\mathrm{d}x
+
\int_{\mathbb{R}^{N}}
\int_{\mathbb{R}^{N}}
\frac{|Q(x)-Q(y)|^{2}}{|x-y|^{N+2s}}
\mathrm{d}x
\mathrm{d}y
+
\omega
\int_{\mathbb{R}^{N}}
|Q|^{2}
\mathrm{d}x
-
\int_{\mathbb{R}^{N}}
|Q|^{p}
\mathrm{d}x
=0,
\end{aligned}
\end{equation*}
and the Pohoz\'{a}ev identity
\begin{equation*}
\begin{aligned}
&\frac{N-2}{2}
\int_{\mathbb{R}^{N}}
|\nabla Q|^{2}
\mathrm{d}x
+\frac{N-2s}{2}
\int_{\mathbb{R}^{N}}
\int_{\mathbb{R}^{N}}
\frac{|Q(x)-Q(y)|^{2}}{|x-y|^{N+2s}}
\mathrm{d}x
\mathrm{d}y\\
&+
\frac{N}{2}
\omega
\int_{\mathbb{R}^{N}}
|Q|^{2}
\mathrm{d}x
-\frac{N}{p}
\int_{\mathbb{R}^{N}}
|Q|^{p}
\mathrm{d}x
=0.
\end{aligned}
\end{equation*}
Then
\begin{equation*}
\begin{aligned}
\int_{\mathbb{R}^{N}}
|\nabla Q|^{2}
\mathrm{d}x
+s
\int_{\mathbb{R}^{N}}
\int_{\mathbb{R}^{N}}
\frac{|Q(x)-Q(y)|^{2}}{|x-y|^{N+2s}}
\mathrm{d}x
\mathrm{d}y
-\frac{(p-2)N}{2p}
\int_{\mathbb{R}^{N}}
|Q|^{p}
\mathrm{d}x=0
\end{aligned}
\end{equation*}
From Lemmas \ref{Lemma5.1} and \ref{Lemma5.2},
we get that
\begin{equation*}
\begin{aligned}
&\frac{1}{2}
\int_{\mathbb{R}^{N}}
|\nabla Q|^{2}
\mathrm{d}x
+(s-\frac{1}{2})
\int_{\mathbb{R}^{N}}
\int_{\mathbb{R}^{N}}
\frac{|Q(x)-Q(y)|^{2}}{|x-y|^{N+2s}}
\mathrm{d}x
\mathrm{d}y
-\frac{1}{p}
(\frac{(p-2)N}{2}-1)
\int_{\mathbb{R}^{N}}
|Q|^{p}
\mathrm{d}x\\
=&
\left[\frac{1}{2}\frac{pN-2N-4s}{2N+4-pN}
+s-\frac{1}{2}
-\frac{1}{p}
(\frac{(p-2)N}{2}-1)
\frac{2p(1-s)}{2N+4-pN}\right]
\int_{\mathbb{R}^{N}}
\int_{\mathbb{R}^{N}}
\frac{|Q(x)-Q(y)|^{2}}{|x-y|^{N+2s}}
\mathrm{d}x
\mathrm{d}y\\
=&0.
\end{aligned}
\end{equation*}
Therefore,
\begin{equation*}
\begin{aligned}
0=&
\int_{\mathbb{R}^{N}}
|\nabla Q|^{2}
\mathrm{d}x
+s
\int_{\mathbb{R}^{N}}
\int_{\mathbb{R}^{N}}
\frac{|Q(x)-Q(y)|^{2}}{|x-y|^{N+2s}}
\mathrm{d}x
\mathrm{d}y
-\frac{(p-2)N}{2p}
\int_{\mathbb{R}^{N}}
|Q|^{p}
\mathrm{d}x\\
=&\frac{1}{2}
\int_{\mathbb{R}^{N}}
|\nabla Q|^{2}
\mathrm{d}x
+(s-\frac{1}{2})
\int_{\mathbb{R}^{N}}
\int_{\mathbb{R}^{N}}
\frac{|Q(x)-Q(y)|^{2}}{|x-y|^{N+2s}}
\mathrm{d}x
\mathrm{d}y
-\frac{1}{p}
(\frac{(p-2)N}{2}-1)
\int_{\mathbb{R}^{N}}
|Q|^{p}
\mathrm{d}x\\
&+\frac{1}{2}
\int_{\mathbb{R}^{N}}
|\nabla Q|^{2}
\mathrm{d}x
+\frac{1}{2}
\int_{\mathbb{R}^{N}}
\int_{\mathbb{R}^{N}}
\frac{|Q(x)-Q(y)|^{2}}{|x-y|^{N+2s}}
\mathrm{d}x
\mathrm{d}y
-\frac{1}{p}
\int_{\mathbb{R}^{N}}
|Q|^{p}
\mathrm{d}x\\
=&\frac{1}{2}
\int_{\mathbb{R}^{N}}
|\nabla Q|^{2}
\mathrm{d}x
+\frac{1}{2}
\int_{\mathbb{R}^{N}}
\int_{\mathbb{R}^{N}}
\frac{|Q(x)-Q(y)|^{2}}{|x-y|^{N+2s}}
\mathrm{d}x
\mathrm{d}y
-\frac{1}{p}
\int_{\mathbb{R}^{N}}
|Q|^{p}
\mathrm{d}x.
\end{aligned}
\end{equation*}
From Theorem \ref{Theorem1.1},
we know that
\begin{equation*}
\begin{aligned}
\int_{\mathbb{R}^{N}}
|Q|^{2}
\mathrm{d}x\geqslant c_{0}.
\end{aligned}
\end{equation*}
\end{proof}

\begin{proof}[Proof of Theorem \ref{Theorem1.3}]
By applying Lemmas \ref{Lemma5.1} and \ref{Lemma5.2}, one has
\begin{equation*}
\begin{aligned}
&C_{1,s,p}\\
=&
\left[
\frac{2N+4-pN}{2p(1-s)}
\int_{\mathbb{R}^{N}}|Q|^{p}\mathrm{d}x
\right]^{-\frac{2N+4-pN}{4(1-s)}}
\left[
\frac{pN-2N-4s}{2p(1-s)}\int_{\mathbb{R}^{N}}|Q|^{p}\mathrm{d}x
\right]^{-\frac{pN-2N-4s}{4(1-s)}}
\frac
{\|Q\|_{L^{p}(\mathbb{R}^{N})}^{p}}
{
\|Q\|_{L^{2}(\mathbb{R}^{N})}^{p-2}
}\\
=&\left[
\frac{2N+4-pN}{2p(1-s)}
\right]^{-\frac{2N+4-pN}{4(1-s)}}
\left[
\frac{pN-2N-4s}{2p(1-s)}
\right]^{-\frac{pN-2N-4s}{4(1-s)}}
\frac{1}{\|Q\|_{L^{2}(\mathbb{R}^{N})}^{p-2}}.
\end{aligned}
\end{equation*}
Set
\begin{equation*}
\begin{aligned}
C_{1,s,p}':=\left[
\frac{2N+4-pN}{2p(1-s)}
\right]^{-\frac{2N+4-pN}{4(1-s)}}
\left[
\frac{pN-2N-4s}{2p(1-s)}
\right]^{-\frac{pN-2N-4s}{4(1-s)}}
\frac{1}{c_{0}^{\frac{p-2}{2}}}.
\end{aligned}
\end{equation*}
We now show that $\|Q\|_{L^{2}(\mathbb{R}^{N})}^{2}=c_{0}$.
We argue by contradiction:
we assume that
$\|Q\|_{L^{2}(\mathbb{R}^{N})}^{2}\not=c_{0}$.
From Lemma \ref{Lemma5.3},
we know that
$\|Q\|_{L^{2}(\mathbb{R}^{N})}^{2}>c_{0}$.
Then
\begin{equation*}
\begin{aligned}
\frac{\|Q\|_{L^{p}(\mathbb{R}^{N})}^{p}}
{\|Q\|_{D^{s,2}(\mathbb{R}^{N})}^{\frac{2N+4-pN}{2(1-s)}}
\|Q\|_{D^{1,2}(\mathbb{R}^{N})}^{\frac{pN-2N-4s}{2(1-s)}}
\|Q\|_{L^{2}(\mathbb{R}^{N})}^{p-2}}=C_{1,s,p}
<C_{1,s,p}',
\end{aligned}
\end{equation*}
which implies that
\begin{equation*}
\begin{aligned}
\|Q\|_{L^{p}(\mathbb{R}^{N})}^{p}
<C_{1,s,p}'\|Q\|_{D^{s,2}(\mathbb{R}^{N})}^{\frac{2N+4-pN}{2(1-s)}}
\|Q\|_{D^{1,2}(\mathbb{R}^{N})}^{\frac{pN-2N-4s}{2(1-s)}}
\|Q\|_{L^{2}(\mathbb{R}^{N})}^{p-2}.
\end{aligned}
\end{equation*}
This implies that $C_{1,s,p}$ is not the best constant of inequality \eqref{1.2}.
Thus we obtain the contradiction, and the result follows.
\end{proof}

\section{Proof of Theorem \ref{Theorem1.4}}
By using \eqref{4.1}, we get that $J$ is bounded from below on $S_{c}$.
We point out that the condition $m_{c}<0$ is necessary to ensure the relative compactness (up to translations) of the minimizing sequences.
To the best of our knowledge,
the characterization result of $m_{c}$ in terms of the Gagliardo-Nirenberg inequality is novel.
\begin{lemma}\label{Lemma6.1}
Let $u\in S_{c}$, $c>0$ and $p\in(2+\frac{4s}{N},2+\frac{4}{N})$. Then $m_{c}<0$ if and only if
\begin{equation*}
\begin{aligned}
\|u\|_{D^{1,2}(\mathbb{R}^{N})}^{\frac{N(p-2)-4s}{2-2s}}
\|u\|_{D^{s,2}(\mathbb{R}^{N})}^{\frac{4-N(p-2)}{2-2s}}
<
\left[
\frac{N(p-2)-4s}{2p(1-s)}
\right]^{\frac{N(p-2)-4s}{4-4s}}
\left[
\frac{4-N(p-2)}{2p(1-s)}
\right]^{\frac{4-N(p-2)}{4-4s}}
\int_{\mathbb{R}^{N}}
|u|^{p}
\mathrm{d}x.
\end{aligned}
\end{equation*}
And $m_{c}=0$ if and only if
\begin{equation*}
\begin{aligned}
\|u\|_{D^{1,2}(\mathbb{R}^{N})}^{\frac{N(p-2)-4s}{2-2s}}
\|u\|_{D^{s,2}(\mathbb{R}^{N})}^{\frac{4-N(p-2)}{2-2s}}
\geqslant
\left[
\frac{N(p-2)-4s}{2p(1-s)}
\right]^{\frac{N(p-2)-4s}{4-4s}}
\left[
\frac{4-N(p-2)}{2p(1-s)}
\right]^{\frac{4-N(p-2)}{4-4s}}
\int_{\mathbb{R}^{N}}
|u|^{p}
\mathrm{d}x.
\end{aligned}
\end{equation*}
\end{lemma}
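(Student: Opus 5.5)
The plan is to analyse $J$ along the two-parameter family of mass-preserving dilations and optimize over the scaling parameter. The statement is to be read as: $m_c<0$ iff some $u\in S_c$ satisfies the strict inequality, and $m_c=0$ iff every $u\in S_c$ satisfies the reverse inequality.

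\textbf{Step 1: reduce to a one-variable problem.} For $u\in S_c$ and $t>0$, set $u_t(x):=t^{N/2}u(tx)\in S_c$. Write $A=\|u\|_{D^{1,2}}^2$, $B=\|u\|_{D^{s,2}}^2$, $P=\int|u|^p$ and $\theta=\frac{N(p-2)}{2}\in(2s,2)$. Then
\[
J(u_t)=\tfrac12 t^{2}A+\tfrac12 t^{2s}B-\tfrac1p t^{\theta}P .
\]
Since $2s<\theta<2$, we have $J(u_t)\to0$ as $t\to0$, so $m_c\le 0$ for every $c$. Also $m_c>-\infty$ by \eqref{4.1} combined with Young's inequality. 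Hence $m_c\in\{<0,\,=0\}$.

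\textbf{Step 2: optimize in $t$.} Dividing by $t^\theta$, we see that $\inf_{t>0}J(u_t)<0$ iff $\inf_{t>0}\big(\tfrac12 t^{2-\theta}A+\tfrac12 t^{2s-\theta}B\big)<P/p$. The function $f(t)=at^\alpha+bt^{-\beta}$ with $\alpha=2-\theta>0$, $\beta=\theta-2s>0$ has minimum
\[
\min_{t>0} f=\frac{\alpha+\beta}{\alpha^{\beta/(\alpha+\beta)}\beta^{\alpha/(\alpha+\beta)}}\,a^{\beta/(\alpha+\beta)}b^{\alpha/(\alpha+\beta)},
\]
attained at $t^{\alpha+\beta}=\beta b/(\alpha a)$. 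Here $\alpha+\beta=2-2s$, and $\beta/(\alpha+\beta)=\frac{N(p-2)-4s}{4-4s}$ is exactly half the exponent on $\|u\|_{D^{1,2}}$ in the statement. So $\inf_t J(u_t)<0$ becomes, after rearranging constants, $\|u\|_{D^{1,2}}^{\frac{N(p-2)-4s}{2-2s}}\|u\|_{D^{s,2}}^{\frac{4-N(p-2)}{2-2s}}<K\,P$. I will carefully check that $K$ matches the displayed product $\big[\tfrac{N(p-2)-4s}{2p(1-s)}\big]^{\cdots}\big[\tfrac{4-N(p-2)}{2p(1-s)}\big]^{\cdots}$ by substituting $a=A/2$, $b=B/2$, $\alpha=\frac{4-N(p-2)}{2}$, $\beta=\frac{N(p-2)-4s}{2}$. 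This constant bookkeeping is the main obstacle: it is routine but error-prone. I will cross-check it against Lemma~\ref{Lemma5.2}, where $Q$ attains equality with $J(Q)=0$.

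\textbf{Step 3: conclude both equivalences.} If $m_c<0$, there is $u\in S_c$ with $J(u)<0$. Then $\inf_t J(u_t)\le J(u)<0$, so $u$ satisfies the strict inequality. Conversely, if some $u\in S_c$ satisfies the strict inequality, Step 2 gives $t$ with $J(u_t)<0$, hence $m_c<0$. The second equivalence is the contrapositive: since $m_c\le0$ always, $m_c=0$ iff $m_c\not<0$ iff no $u\in S_c$ satisfies the strict inequality, i.e.\ every $u\in S_c$ satisfies the reverse ($\geqslant$) inequality.
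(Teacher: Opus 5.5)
Your route is essentially the paper's. The paper uses the same mass-preserving dilation $u_t(x)=t^{N/2}u(tx)$, divides $J(u_t)$ by $t^{2s}$ rather than by $t^{\theta}$, locates the unique critical point $t_u$ and evaluates the minimum.

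\textbf{The error in Step 2.} The one formula your Step 2 rests on is wrong: the exponents of $\alpha$ and $\beta$ in the denominator are interchanged. The correct value, from weighted AM--GM with weights $\beta/(\alpha+\beta)$ and $\alpha/(\alpha+\beta)$, is
\[
\min_{t>0}\big(at^{\alpha}+bt^{-\beta}\big)=\frac{\alpha+\beta}{\alpha^{\alpha/(\alpha+\beta)}\,\beta^{\beta/(\alpha+\beta)}}\,a^{\beta/(\alpha+\beta)}\,b^{\alpha/(\alpha+\beta)}.
\]
For example, $at^{2}+b/t$ has minimum $3\cdot 2^{-2/3}a^{1/3}b^{2/3}$, whereas your formula gives $3\cdot 2^{-1/3}a^{1/3}b^{2/3}$. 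Used as written, your formula produces the two bracketed factors of the statement with their exponents swapped. That constant differs from the stated one by the factor $(\alpha/\beta)^{(\beta-\alpha)/(2-2s)}$, so it is wrong unless $\alpha=\beta$.

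With the corrected formula, $a=A/2$, $b=B/2$, $\alpha+\beta=2-2s$ and $\frac{\alpha+\beta}{2-2s}=1$, the condition $\min_t f<P/p$ becomes
\[
A^{\frac{\beta}{2-2s}}B^{\frac{\alpha}{2-2s}}<\frac{\alpha^{\frac{\alpha}{2-2s}}\beta^{\frac{\beta}{2-2s}}}{p(1-s)}\,P=\Big[\frac{\beta}{p(1-s)}\Big]^{\frac{\beta}{2-2s}}\Big[\frac{\alpha}{p(1-s)}\Big]^{\frac{\alpha}{2-2s}}P .
\]
Since $\frac{\beta}{p(1-s)}=\frac{N(p-2)-4s}{2p(1-s)}$, $\frac{\beta}{2-2s}=\frac{N(p-2)-4s}{4-4s}$, $\frac{\alpha}{p(1-s)}=\frac{4-N(p-2)}{2p(1-s)}$ and $\frac{\alpha}{2-2s}=\frac{4-N(p-2)}{4-4s}$, this is exactly the stated inequality.

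\textbf{The rest of the argument.} Apart from this slip, your proof is correct, and in two respects more complete than the paper's. The paper only shows, for each fixed $u$, that $\min_{t>0}J(u_t)<0$ if and only if the inequality holds. It leaves the quantifiers implicit, and it does not prove $m_c\le 0$; that fact is only invoked later, in Lemma~\ref{Lemma6.5}, by citing \cite{LuoTJ-Hajaiej2022ANS}. Your Step 1 proves $m_c\le0$ directly from $J(u_t)\to0$ as $t\to0$. Your Step 3 makes explicit the ``some $u$''/``every $u$'' reading, which is what Theorem~\ref{Theorem1.4}$(c)$,$(c')$ requires.

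The aside $m_c>-\infty$ is not needed for the lemma. If you keep it, note that \eqref{4.1} together with Young's inequality does not give it, because the right-hand side of \eqref{4.1} is homogeneous of degree one in $(A,B)$. It does follow from \eqref{2.1}, since $\theta<2$.
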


\begin{proof}
For any $c>0$ and $u\in S_{c}$, set
\begin{equation*}
\begin{aligned}
u_{t}(x)=t^{\frac{N}{2}}u(tx),
\end{aligned}
\end{equation*}
and
\begin{equation*}
\begin{aligned}
J(u_{t})
=\frac{t^{2}}{2}\|u\|_{D^{1,2}(\mathbb{R}^{N})}^{2}
+\frac{t^{2s}}{2}\|u\|_{D^{s,2}(\mathbb{R}^{N})}^{2}
-\frac{t^{\frac{N(p-2)}{2}}}{p}
\int_{\mathbb{R}^{N}}
|u|^{p}
\mathrm{d}x.
\end{aligned}
\end{equation*}
Set
\begin{equation*}
\begin{aligned}
h_{u}(t):=
\frac{J(u_{t})}{t^{2s}}
=\frac{t^{2-2s}}{2}\|u\|_{D^{1,2}(\mathbb{R}^{N})}^{2}
+\frac{1}{2}\|u\|_{D^{s,2}(\mathbb{R}^{N})}^{2}
-\frac{t^{\frac{N(p-2)}{2}-2s}}{p}
\int_{\mathbb{R}^{N}}
|u|^{p}
\mathrm{d}x.
\end{aligned}
\end{equation*}
\textcolor{red}{Then,}
\begin{equation*}
\begin{aligned}
h_{u}'(t)
=(1-s)t^{1-2s}\|u\|_{D^{1,2}(\mathbb{R}^{N})}^{2}
-(\frac{N(p-2)}{2}-2s)\frac{t^{\frac{N(p-2)}{2}-2s-1}}{p}
\int_{\mathbb{R}^{N}}
|u|^{p}
\mathrm{d}x.
\end{aligned}
\end{equation*}
Thus $h'_{u}(t)=0$ if and only if
\begin{equation*}
\begin{aligned}
t_{u}
=&
\left[\frac{N(p-2)-4s}{2p(1-s)}
\frac{\int_{\mathbb{R}^{N}}
|u|^{p}
\mathrm{d}x}{\|u\|_{D^{1,2}(\mathbb{R}^{N})}^{2}}\right]^{\frac{2}{4-N(p-2)}}.
\end{aligned}
\end{equation*}
From $p<2+\frac{4}{N}$, we have
$1-2s>\frac{N(p-2)}{2}-2s-1$ and
\begin{equation*}
\begin{aligned}
\begin{cases}
h'_{u}(t)<0,&t<t_{u},\\
h'_{u}(t)=0,&t=t_{u},\\
h'_{u}(t)>0,&t>t_{u}.
\end{cases}
\end{aligned}
\end{equation*}
This shows that $t_{u}$ is the unique crtical point of $h_{u}(\cdot)$, and the minimum
\begin{equation*}
\begin{aligned}
\min\limits_{t>0}h_{u}(t)=&h_{u}(t_{u})\\
=&\frac{t_{u}^{2-2s}}{2}\|u\|_{D^{1,2}(\mathbb{R}^{N})}^{2}
+\frac{1}{2}\|u\|_{D^{s,2}(\mathbb{R}^{N})}^{2}
-\frac{t_{u}^{\frac{N(p-2)}{2}-2s}}{p}
\int_{\mathbb{R}^{N}}
|u|^{p}
\mathrm{d}x\\
=&\frac{1}{2}\left[\frac{N(p-2)-4s}{2p(1-s)}
\right]^{\frac{4-4s}{4-N(p-2)}}
\frac{N(p-2)-4}{N(p-2)-4s}
\|u\|_{D^{1,2}(\mathbb{R}^{N})}^{-\frac{2N(p-2)-8s}{4-N(p-2)}}
\left[
\int_{\mathbb{R}^{N}}
|u|^{p}
\mathrm{d}x\right]^{\frac{4-4s}{4-N(p-2)}}\\
&+\frac{1}{2}\|u\|_{D^{s,2}(\mathbb{R}^{N})}^{2}.
\end{aligned}
\end{equation*}
Thus $\min\limits_{t>0}h_{u}(t)<0$ if and only if
\begin{equation*}
\begin{aligned}
&\|u\|_{D^{1,2}(\mathbb{R}^{N})}^{\frac{N(p-2)-4s}{2-2s}}
\|u\|_{D^{s,2}(\mathbb{R}^{N})}^{\frac{4-N(p-2)}{2-2s}}\\
<&\left(\left[\frac{N(p-2)-4s}{2p(1-s)}
\right]^{\frac{4-4s}{4-N(p-2)}}
\frac{4-N(p-2)}{N(p-2)-4s}
\right)^{\frac{4-N(p-2)}{4-4s}}
\|u\|_{L^{p}(\mathbb{R}^{N})}^{p}\\
=&
\left[
\frac{N(p-2)-4s}{2p(1-s)}
\right]^{\frac{N(p-2)-4s}{4-4s}}
\left[
\frac{4-N(p-2)}{2p(1-s)}
\right]^{\frac{4-N(p-2)}{4-4s}}
\|u\|_{L^{p}(\mathbb{R}^{N})}^{p}.
\end{aligned}
\end{equation*}
And $\min\limits_{t>0}h_{u}(t)\geqslant0$ if and only if
\begin{equation*}
\begin{aligned}
&\|u\|_{D^{1,2}(\mathbb{R}^{N})}^{\frac{N(p-2)-4s}{2-2s}}
\|u\|_{D^{s,2}(\mathbb{R}^{N})}^{\frac{4-N(p-2)}{2-2s}}\\
\geqslant&
\left[
\frac{N(p-2)-4s}{2p(1-s)}
\right]^{\frac{N(p-2)-4s}{4-4s}}
\left[
\frac{4-N(p-2)}{2p(1-s)}
\right]^{\frac{4-N(p-2)}{4-4s}}
\|u\|_{L^{p}(\mathbb{R}^{N})}^{p}.
\end{aligned}
\end{equation*}
\end{proof}

\begin{lemma}\label{Lemma6.2}
Let $p\in(2+\frac{4s}{N},2+\frac{4}{N})$.
If
\begin{equation*}
\begin{aligned}
c\leqslant C_{1,s,p}^{-\frac{2}{p-2}}
\left[
\frac{2N+4-pN}{2p(1-s)}
\right]^{-\frac{2N+4-pN}{2(1-s)(p-2)}}
\left[
\frac{pN-2N-4s}{2p(1-s)}
\right]^{-\frac{pN-2N-4s}{2(1-s)(p-2)}},
\end{aligned}
\end{equation*}
then $m_{c}=0$.
\end{lemma}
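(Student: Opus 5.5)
Writing $A:=\frac{2N+4-pN}{2p(1-s)}$, $B:=\frac{pN-2N-4s}{2p(1-s)}$, $\alpha:=\frac{2N+4-pN}{2(1-s)}$ and $\beta:=\frac{pN-2N-4s}{2(1-s)}$, the hypothesis says exactly
\begin{equation*}
c^{\frac{p-2}{2}}\leqslant C_{1,s,p}^{-1}A^{-\frac{\alpha}{2}}B^{-\frac{\beta}{2}} .
\end{equation*}
Note that $\alpha+\beta=2$ and that $A$, $B$ are the coefficients appearing in Lemma \ref{Lemma6.1}: there $\frac{N(p-2)-4s}{2p(1-s)}=B$ and $\frac{4-N(p-2)}{2p(1-s)}=A$, with exponents $\frac{\beta}{2}$ and $\frac{\alpha}{2}$ respectively.

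The plan is to verify, for every $u\in S_{c}$, the inequality of the second alternative of Lemma \ref{Lemma6.1}:
\begin{equation*}
\|u\|_{D^{1,2}(\mathbb{R}^{N})}^{\beta}\|u\|_{D^{s,2}(\mathbb{R}^{N})}^{\alpha}\geqslant B^{\frac{\beta}{2}}A^{\frac{\alpha}{2}}\int_{\mathbb{R}^{N}}|u|^{p}\mathrm{d}x .
\end{equation*}
By the computation in the proof of Lemma \ref{Lemma6.1}, this gives $\min_{t>0}h_{u}(t)\geqslant 0$, hence $J(u_{t})\geqslant0$ for all $t>0$. In particular $J(u)\geqslant 0$ for every $u\in S_{c}$, so $m_{c}\geqslant 0$.

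\textbf{Key step.} Apply the Gagliardo--Nirenberg inequality \eqref{4.1} with $\|u\|_{L^{2}}^{p-2}=c^{\frac{p-2}{2}}$:
\begin{equation*}
\int_{\mathbb{R}^{N}}|u|^{p}\mathrm{d}x\leqslant C_{1,s,p}\,c^{\frac{p-2}{2}}\|u\|_{D^{s,2}}^{\alpha}\|u\|_{D^{1,2}}^{\beta}\leqslant A^{-\frac{\alpha}{2}}B^{-\frac{\beta}{2}}\|u\|_{D^{s,2}}^{\alpha}\|u\|_{D^{1,2}}^{\beta}.
\end{equation*}
This is exactly the required inequality. It is the main point, and it reduces to checking that the constants of Lemma \ref{Lemma6.1} and of the hypothesis match, including signs of the exponents.

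\textbf{Reverse inequality.} To get $m_{c}\leqslant 0$, fix any $u\in S_{c}$ and use the mass-preserving dilation $u_{t}(x)=t^{N/2}u(tx)$. Then
\begin{equation*}
J(u_{t})=\frac{t^{2}}{2}\|u\|_{D^{1,2}}^{2}+\frac{t^{2s}}{2}\|u\|_{D^{s,2}}^{2}-\frac{t^{N(p-2)/2}}{p}\int_{\mathbb{R}^{N}}|u|^{p}\mathrm{d}x .
\end{equation*}
All three exponents $2$, $2s$ and $N(p-2)/2$ are positive, so $J(u_{t})\to0$ as $t\to0^{+}$, and therefore $m_{c}\leqslant 0$. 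Combined with $m_{c}\geqslant0$ this gives $m_{c}=0$. The only step needing care is the constant bookkeeping in the key step; everything else is immediate.
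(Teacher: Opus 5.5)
Your proof is correct and follows the paper's argument: the paper likewise inserts $\|u\|_{L^{2}(\mathbb{R}^{N})}^{p-2}=c^{\frac{p-2}{2}}$ into \eqref{4.1}, uses the hypothesis to bound $C_{1,s,p}c^{\frac{p-2}{2}}$ by the reciprocal of the constant in Lemma \ref{Lemma6.1}, and then concludes $m_{c}=0$ from that lemma. Your version is more careful on two points that the paper leaves implicit in its loosely stated Lemma \ref{Lemma6.1}: you get $m_{c}\geqslant 0$ from the computation $\min_{t>0}h_{u}(t)\geqslant 0$ evaluated at $t=1$, and you prove $m_{c}\leqslant 0$ directly by letting $t\to0^{+}$ in $J(u_{t})$.
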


\begin{proof}
Using Lemma \ref{Lemma4.1}, for all $u\in S_{c}$, we know that
\begin{equation}\label{6.1}
\begin{aligned}
\|u\|_{L^{p}(\mathbb{R}^{N})}^{p}
\leqslant& C_{1,s,p}
\|u\|_{D^{s,2}(\mathbb{R}^{N})}^{\frac{2N+4-pN}{2(1-s)}}
\|u\|_{D^{1,2}(\mathbb{R}^{N})}^{\frac{pN-2N-4s}{2(1-s)}}
\|u\|_{L^{2}(\mathbb{R}^{N})}^{p-2}\\
=& C_{1,s,p}c^{\frac{p-2}{2}}
\|u\|_{D^{s,2}(\mathbb{R}^{N})}^{\frac{2N+4-pN}{2(1-s)}}
\|u\|_{D^{1,2}(\mathbb{R}^{N})}^{\frac{pN-2N-4s}{2(1-s)}}.
\end{aligned}
\end{equation}
By using \eqref{6.1} and
\begin{equation*}
\begin{aligned}
C_{1,s,p}c^{\frac{p-2}{2}}
\leqslant&
\left[
\frac{N(p-2)-4s}{2p(1-s)}
\right]^{-\frac{N(p-2)-4s}{4-4s}}
\left[
\frac{4-N(p-2)}{2p(1-s)}
\right]^{-\frac{4-N(p-2)}{4-4s}},
\end{aligned}
\end{equation*}
one deduces
\begin{equation*}
\begin{aligned}
&\|u\|_{L^{p}(\mathbb{R}^{N})}^{p}\\
\leqslant&
\left[
\frac{N(p-2)-4s}{2p(1-s)}
\right]^{-\frac{N(p-2)-4s}{4-4s}}
\left[
\frac{4-N(p-2)}{2p(1-s)}
\right]^{-\frac{4-N(p-2)}{4-4s}}
\|u\|_{D^{1,2}(\mathbb{R}^{N})}^{\frac{N(p-2)-4s}{2-2s}}
\|u\|_{D^{s,2}(\mathbb{R}^{N})}^{\frac{4-N(p-2)}{2-2s}}.
\end{aligned}
\end{equation*}
According to Lemma \ref{Lemma6.1}, we infer that $m_{c}=0$.
\end{proof}

\begin{lemma}\label{Lemma6.3}
Let $p\in(2+\frac{4s}{N},2+\frac{4}{N})$.
If $m_{c}<0$, then
\begin{equation*}
\begin{aligned}
c>C_{1,s,p}^{-\frac{2}{p-2}}
\left[
\frac{2N+4-pN}{2p(1-s)}
\right]^{-\frac{2N+4-pN}{2(1-s)(p-2)}}
\left[
\frac{pN-2N-4s}{2p(1-s)}
\right]^{-\frac{pN-2N-4s}{2(1-s)(p-2)}}.
\end{aligned}
\end{equation*}
\end{lemma}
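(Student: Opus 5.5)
The plan is to argue directly from the characterization in Lemma \ref{Lemma6.1} combined with the Gagliardo--Nirenberg inequality of Lemma \ref{Lemma4.1}. Equivalently, one could prove the contrapositive of Lemma \ref{Lemma6.2}, but that lemma only gives the implication ``$c\leqslant$ threshold $\Rightarrow m_{c}=0$'', which is exactly the contrapositive of the present statement. So the statement follows immediately once we note that $m_{c}\leqslant 0$ always and that $m_{c}<0$ excludes $m_{c}=0$. I would still write out the direct chain of inequalities for transparency.

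First, set $K:=\left[\frac{N(p-2)-4s}{2p(1-s)}\right]^{\frac{N(p-2)-4s}{4-4s}}\left[\frac{4-N(p-2)}{2p(1-s)}\right]^{\frac{4-N(p-2)}{4-4s}}$. Assume $m_{c}<0$. Then, by definition of the infimum, there is $u\in S_{c}$ with $J(u)<0$. The function $h_{u}$ from the proof of Lemma \ref{Lemma6.1} satisfies $h_{u}(1)=J(u)<0$, hence $\min_{t>0}h_{u}(t)<0$. By the computation in Lemma \ref{Lemma6.1}, this is equivalent to the strict inequality
\begin{equation*}
\|u\|_{D^{1,2}(\mathbb{R}^{N})}^{\frac{N(p-2)-4s}{2-2s}}\|u\|_{D^{s,2}(\mathbb{R}^{N})}^{\frac{4-N(p-2)}{2-2s}}<K\int_{\mathbb{R}^{N}}|u|^{p}\mathrm{d}x .
\end{equation*}

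Next, apply \eqref{6.1} to the same $u$. Its right-hand side is $C_{1,s,p}c^{\frac{p-2}{2}}$ times the product of seminorms on the left above. Chaining the two inequalities gives
\begin{equation*}
\|u\|_{L^{p}(\mathbb{R}^{N})}^{p}\leqslant C_{1,s,p}c^{\frac{p-2}{2}}\,(\text{product})<C_{1,s,p}c^{\frac{p-2}{2}}K\|u\|_{L^{p}(\mathbb{R}^{N})}^{p}.
\end{equation*}
Here $\|u\|_{L^{p}}>0$, since $J(u)<0$ forces $u\neq0$ with nonzero $L^{p}$ norm; dividing by it yields $C_{1,s,p}c^{\frac{p-2}{2}}K>1$. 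Solving for $c$ gives $c>C_{1,s,p}^{-\frac{2}{p-2}}K^{-\frac{2}{p-2}}$. It remains to check that $K^{-\frac{2}{p-2}}$ equals the product of the two bracketed factors in the statement, using $N(p-2)-4s=pN-2N-4s$ and $4-N(p-2)=2N+4-pN$. Indeed, the exponents $-\frac{2N+4-pN}{4(1-s)}\cdot\frac{2}{p-2}=-\frac{2N+4-pN}{2(1-s)(p-2)}$ match, and similarly for the other factor.

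The only delicate point is the strictness of the final inequality. It comes from the strict inequality of Lemma \ref{Lemma6.1} for the chosen $u$, not from the Gagliardo--Nirenberg bound, which is only non-strict. I would therefore be careful to pick $u$ with $J(u)<0$ rather than working with the infimum; beyond that, the argument is routine exponent bookkeeping.
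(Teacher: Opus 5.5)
Your proof is correct and is essentially the paper's argument: the paper also takes the strict inequality of Lemma \ref{Lemma6.1} for some $u\in S_{c}$, chains it with the Gagliardo--Nirenberg bound \eqref{6.1}, and cancels a common positive factor to get $K^{-1}<C_{1,s,p}c^{\frac{p-2}{2}}$. Choosing $u$ with $J(u)<0$, so that $h_{u}(1)<0$, is the correct reading of Lemma \ref{Lemma6.1}, whose ``for $u\in S_{c}$'' phrasing is loose. Your remark that the statement is the contrapositive of Lemma \ref{Lemma6.2} is also right, and that route does not even need $m_{c}\leqslant 0$.
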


\begin{proof}
According to Lemma \ref{Lemma6.1}, for $u\in S_{c}$, we conclude that $m_{c}<0$ if and only if
\begin{equation*}
\begin{aligned}
&
\left[
\frac{N(p-2)-4s}{2p(1-s)}
\right]^{-\frac{N(p-2)-4s}{4-4s}}
\left[
\frac{4-N(p-2)}{2p(1-s)}
\right]^{-\frac{4-N(p-2)}{4-4s}}
\|u\|_{D^{1,2}(\mathbb{R}^{N})}^{\frac{N(p-2)-4s}{2-2s}}
\|u\|_{D^{s,2}(\mathbb{R}^{N})}^{\frac{4-N(p-2)}{2-2s}}\\
<&\|u\|_{L^{p}(\mathbb{R}^{N})}^{p}.
\end{aligned}
\end{equation*}
From \eqref{6.1}, one has
\begin{equation*}
\begin{aligned}
&
\left[
\frac{N(p-2)-4s}{2p(1-s)}
\right]^{-\frac{N(p-2)-4s}{4-4s}}
\left[
\frac{4-N(p-2)}{2p(1-s)}
\right]^{-\frac{4-N(p-2)}{4-4s}}
\|u\|_{D^{1,2}(\mathbb{R}^{N})}^{\frac{N(p-2)-4s}{2-2s}}
\|u\|_{D^{s,2}(\mathbb{R}^{N})}^{\frac{4-N(p-2)}{2-2s}}\\
<&\|u\|_{L^{p}(\mathbb{R}^{N})}^{p}\\
\leqslant& C_{1,s,p}
\|u\|_{D^{s,2}(\mathbb{R}^{N})}^{\frac{2N+4-pN}{2(1-s)}}
\|u\|_{D^{1,2}(\mathbb{R}^{N})}^{\frac{pN-2N-4s}{2(1-s)}}
c^{\frac{p-2}{2}},
\end{aligned}
\end{equation*}
which implies
\begin{equation*}
\begin{aligned}
\left[
\frac{N(p-2)-4s}{2p(1-s)}
\right]^{-\frac{N(p-2)-4s}{4-4s}}
\left[
\frac{4-N(p-2)}{2p(1-s)}
\right]^{-\frac{4-N(p-2)}{4-4s}}
<
C_{1,s,p}c^{\frac{p-2}{2}}.
\end{aligned}
\end{equation*}
\end{proof}

\begin{lemma}\label{Lemma6.4}
Let $p\in(2+\frac{4s}{N},2+\frac{4}{N})$.
If
\begin{equation*}
\begin{aligned}
c> C_{1,s,p}^{-\frac{2}{p-2}}
\left[
\frac{2N+4-pN}{2p(1-s)}
\right]^{-\frac{2N+4-pN}{2(1-s)(p-2)}}
\left[
\frac{pN-2N-4s}{2p(1-s)}
\right]^{-\frac{pN-2N-4s}{2(1-s)(p-2)}},
\end{aligned}
\end{equation*}
then $m_{c}<0$.
\end{lemma}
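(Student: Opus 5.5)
Let $K:=C_{1,s,p}^{-\frac{2}{p-2}}\bigl[\frac{2N+4-pN}{2p(1-s)}\bigr]^{-\frac{2N+4-pN}{2(1-s)(p-2)}}\bigl[\frac{pN-2N-4s}{2p(1-s)}\bigr]^{-\frac{pN-2N-4s}{2(1-s)(p-2)}}$, so that $c>K$ is exactly
\begin{equation*}
C_{1,s,p}\,c^{\frac{p-2}{2}}>A:=\left[\frac{N(p-2)-4s}{2p(1-s)}\right]^{-\frac{N(p-2)-4s}{4-4s}}\left[\frac{4-N(p-2)}{2p(1-s)}\right]^{-\frac{4-N(p-2)}{4-4s}}.
\end{equation*}
The plan is to build an explicit $u\in S_{c}$ for which the strict inequality of Lemma \ref{Lemma6.1} holds. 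Then $\min_{t>0}h_{u}(t)<0$, so $J(u_{t})<0$ for some $t>0$. Since $u_{t}\in S_{c}$, this gives $m_{c}<0$. The natural test function is the Gagliardo--Nirenberg optimizer $Q$ of Lemma \ref{Lemma4.2}, rescaled to have mass $c$.

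Set $u:=\sqrt{c/\|Q\|_{L^{2}}^{2}}\,Q$, so $u\in S_{c}$. Equality in \eqref{4.1} is invariant under multiplication by constants, so $u$ is still an optimizer:
\begin{equation*}
\|u\|_{L^{p}}^{p}=C_{1,s,p}\,c^{\frac{p-2}{2}}\,\|u\|_{D^{s,2}}^{\frac{2N+4-pN}{2(1-s)}}\|u\|_{D^{1,2}}^{\frac{pN-2N-4s}{2(1-s)}}.
\end{equation*}
The exponents here are $\frac{4-N(p-2)}{2-2s}$ and $\frac{N(p-2)-4s}{2-2s}$, the same as in Lemma \ref{Lemma6.1}. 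By the assumption on $c$, the right-hand side is strictly larger than $A\,\|u\|_{D^{1,2}}^{\frac{N(p-2)-4s}{2-2s}}\|u\|_{D^{s,2}}^{\frac{4-N(p-2)}{2-2s}}$. This strict inequality needs the seminorms of $u$ to be nonzero, which holds because $Q\not\equiv0$ lies in $H^{1}$.

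Rearranging gives
\begin{equation*}
\|u\|_{D^{1,2}}^{\frac{N(p-2)-4s}{2-2s}}\|u\|_{D^{s,2}}^{\frac{4-N(p-2)}{2-2s}}<A^{-1}\|u\|_{L^{p}}^{p},
\end{equation*}
where $A^{-1}$ is exactly the constant in Lemma \ref{Lemma6.1}. That lemma, or more precisely the computation of $\min_{t>0}h_{u}(t)$ in its proof, then yields $h_{u}(t_{u})<0$, hence $J(u_{t_{u}})<0$ with $u_{t_{u}}\in S_{c}$, and so $m_{c}<0$.

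The only real obstacle is that the proof needs an actual optimizer, so that the supremum defining $C_{1,s,p}$ is attained; Lemma \ref{Lemma4.2} supplies this. Without an optimizer one would instead take a maximizing sequence $u_{n}$ whose ratio tends to $C_{1,s,p}$ and, using the strict gap $C_{1,s,p}c^{\frac{p-2}{2}}>A$, choose $n$ large enough that the inequality is still strict. The rest is exponent bookkeeping: checking that $A$ matches the constant of Lemma \ref{Lemma6.1}, which is the same identity already used in Lemma \ref{Lemma6.2}.
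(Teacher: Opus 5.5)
Your proposal is correct and is essentially the paper's proof. You normalize the Gagliardo--Nirenberg optimizer $Q$ to $Q_c\in S_c$, use $c>K$ (equivalently $C_{1,s,p}c^{\frac{p-2}{2}}>A$) to get the strict inequality of Lemma \ref{Lemma6.1}, and conclude $m_c<0$ through the mass-preserving dilation $u_t=t^{N/2}u(tx)$.

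Your maximizing-sequence variant is a small bonus: it needs only the finiteness of $C_{1,s,p}$ from Lemma \ref{Lemma4.1}, so the argument no longer depends on the attainment result of Lemma \ref{Lemma4.2}.
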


\begin{proof}
Let $Q$ be an optimizer of the Gagliardo-Nirenberg inequality \eqref{4.1}.
Set $Q_{c}:=\frac{\sqrt{c}}{\|Q\|_{L^{2}(\mathbb{R}^{N})}}Q$.
Then
\begin{equation*}
\begin{aligned}
&\frac{
\|Q_{c}\|_{D^{s,2}(\mathbb{R}^{N})}^{\frac{2N+4-pN}{2(1-s)}}
\|Q_{c}\|_{D^{1,2}(\mathbb{R}^{N})}^{\frac{pN-2N-4s}{2(1-s)}}
\|Q_{c}\|_{L^{2}(\mathbb{R}^{N})}^{p-2}
}
{\|Q_{c}\|_{L^{p}(\mathbb{R}^{N})}^{p}}\\
=&
\frac{
\left[\frac{\sqrt{c}}{\|Q\|_{L^{2}(\mathbb{R}^{N})}}\right]^{\frac{2N+4-pN}{2(1-s)}}
\left[\frac{\sqrt{c}}{\|Q\|_{L^{2}(\mathbb{R}^{N})}}\right]^{\frac{pN-2N-4s}{2(1-s)}}
\left[\frac{\sqrt{c}}{\|Q\|_{L^{2}(\mathbb{R}^{N})}}\right]^{p-2}
}
{\left[\frac{\sqrt{c}}{\|Q\|_{L^{2}(\mathbb{R}^{N})}}\right]^{p}}\\
&\times
\frac{
\|Q\|_{D^{s,2}(\mathbb{R}^{N})}^{\frac{2N+4-pN}{2(1-s)}}
\|Q\|_{D^{1,2}(\mathbb{R}^{N})}^{\frac{pN-2N-4s}{2(1-s)}}
\|Q\|_{L^{2}(\mathbb{R}^{N})}^{p-2}
}
{\|Q\|_{L^{p}(\mathbb{R}^{N})}^{p}}\\
=&
C_{1,s,p}.
\end{aligned}
\end{equation*}
This shows that $Q_{c}$ is also an optimizer of the Gagliardo-Nirenberg inequality \eqref{4.1}.
Since $Q_{c}\in S_{c}$,
we get that
\begin{equation*}
\begin{aligned}
&\|Q_{c}\|_{L^{p}(\mathbb{R}^{N})}^{p}\\
=&C_{1,s,p}
c^{\frac{p-2}{2}}
\|Q_{c}\|_{D^{s,2}(\mathbb{R}^{N})}^{\frac{2N+4-pN}{2(1-s)}}
\|Q_{c}\|_{D^{1,2}(\mathbb{R}^{N})}^{\frac{pN-2N-4s}{2(1-s)}}\\
>&
\left[
\frac{N(p-2)-4s}{2p(1-s)}
\right]^{-\frac{N(p-2)-4s}{4-4s}}
\left[
\frac{4-N(p-2)}{2p(1-s)}
\right]^{-\frac{4-N(p-2)}{4-4s}}
\|Q_{c}\|_{D^{1,2}(\mathbb{R}^{N})}^{\frac{N(p-2)-4s}{2-2s}}
\|Q_{c}\|_{D^{s,2}(\mathbb{R}^{N})}^{\frac{4-N(p-2)}{2-2s}}.
\end{aligned}
\end{equation*}
From $Q_{c}\in S_{c}$ and Lemma \ref{Lemma6.1}, we know $m_{c}<0$.
\end{proof}

\begin{lemma}\label{Lemma6.5}
Let $p\in(2+\frac{4s}{N},2+\frac{4}{N})$.
If $m_{c}=0$,
then
\begin{equation*}
\begin{aligned}
c\leqslant C_{1,s,p}^{-\frac{2}{p-2}}
\left[
\frac{2N+4-pN}{2p(1-s)}
\right]^{-\frac{2N+4-pN}{2(1-s)(p-2)}}
\left[
\frac{pN-2N-4s}{2p(1-s)}
\right]^{-\frac{pN-2N-4s}{2(1-s)(p-2)}}.
\end{aligned}
\end{equation*}
\end{lemma}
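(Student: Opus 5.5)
Write $c^{*}$ for the threshold on the right-hand side of the statement. The proof goes by contraposition and is essentially immediate from Lemma \ref{Lemma6.4}. First I check that $m_{c}\leqslant 0$ for every $c>0$. For $u\in S_{c}$, the scaling $u_{t}(x)=t^{\frac{N}{2}}u(tx)$ from the proof of Lemma \ref{Lemma6.1} keeps $u_t\in S_c$ and gives
\begin{equation*}
J(u_{t})=\frac{t^{2}}{2}\|u\|_{D^{1,2}(\mathbb{R}^{N})}^{2}+\frac{t^{2s}}{2}\|u\|_{D^{s,2}(\mathbb{R}^{N})}^{2}-\frac{t^{\frac{N(p-2)}{2}}}{p}\int_{\mathbb{R}^{N}}|u|^{p}\mathrm{d}x .
\end{equation*}
All three exponents $2$, $2s$, $\frac{N(p-2)}{2}$ are positive, so $J(u_{t})\to0$ as $t\to0^{+}$. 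Hence $m_{c}\leqslant0$, and $m_c$ is either negative or zero.

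Now suppose $m_{c}=0$ but $c>c^{*}$. Lemma \ref{Lemma6.4} then gives $m_{c}<0$, which is a contradiction, so $c\leqslant c^{*}$. For a self-contained argument that avoids relying on the exact form of Lemma \ref{Lemma6.4}, I would rerun its construction. Take the optimizer $Q$ from Lemma \ref{Lemma4.2} and set $Q_{c}=\frac{\sqrt c}{\|Q\|_{L^2}}Q\in S_{c}$. Equality in \eqref{4.1} for $Q_c$, combined with $C_{1,s,p}c^{\frac{p-2}{2}}$ exceeding the reciprocal constant appearing in Lemma \ref{Lemma6.1}, shows that $Q_{c}$ satisfies the strict inequality of Lemma \ref{Lemma6.1}. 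That forces $\min_{t>0}h_{Q_{c}}(t)<0$, so $J((Q_{c})_{t})<0$ for some $t>0$, and therefore $m_{c}<0$.

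\textbf{Main obstacle.} The only delicate point is bookkeeping. One must check that the exponents in $c^{*}$, namely $-\frac{2N+4-pN}{2(1-s)(p-2)}$ and $-\frac{pN-2N-4s}{2(1-s)(p-2)}$, turn the condition $c>c^{*}$ exactly into
\begin{equation*}
C_{1,s,p}c^{\frac{p-2}{2}}>\Big[\tfrac{N(p-2)-4s}{2p(1-s)}\Big]^{-\frac{N(p-2)-4s}{4-4s}}\Big[\tfrac{4-N(p-2)}{2p(1-s)}\Big]^{-\frac{4-N(p-2)}{4-4s}} .
\end{equation*}
To see this, raise $c>c^{*}$ to the power $\frac{p-2}{2}$ and multiply by $C_{1,s,p}$; the identities $2N+4-pN=4-N(p-2)$ and $pN-2N-4s=N(p-2)-4s$ then match the two sides. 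One also needs the existence of the optimizer $Q$, which is Lemma \ref{Lemma4.2}.
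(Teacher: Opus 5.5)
Your argument is correct and is essentially the paper's proof. The paper also obtains the lemma as the contrapositive of Lemma~\ref{Lemma6.4}, citing Luo--Hajaiej for $m_{c}\leqslant0$, whereas you verify $m_{c}\leqslant0$ directly via the mass-preserving dilation $t^{\frac{N}{2}}u(tx)$ with $t\to0^{+}$. That step is not actually needed, since $m_{c}=0$ already contradicts the conclusion $m_{c}<0$ of Lemma~\ref{Lemma6.4}; and in your self-contained rerun the exact optimizer $Q$ could be replaced by any near-optimizer of \eqref{4.1}, because the hypothesis $c>c^{*}$ is strict.
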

\begin{proof}
From \cite[Theorem 1.1 (4)]{LuoTJ-Hajaiej2022ANS}, we know that $m_{c}\leqslant0$ for all $c>0$. Then we get the inverse negative proposition of Lemma \ref{Lemma6.4} without proof.
\end{proof}

\begin{proof}[Proof of Theorem \ref{Theorem1.4}]
Combining Lemmas \ref{Lemma6.1}-\ref{Lemma6.5}, we get the desired result.
\end{proof}

\section{Proof of Theorem \ref{Theorem1.5}}
In this section, we prove Theorem \ref{Theorem1.5}.
\begin{proof}[Proof of Theorem \ref{Theorem1.5}]
Let $u_{c_{0}}\in H^{1}(\mathbb{R}^{N})$ be an energy ground state solution of equation \eqref{1.1} with a critical mass $c_{0}$.
From Thoerem \ref{Theorem1.1}, we know that
\begin{equation*}
\begin{aligned}
0=J(u_{c_{0}})=m_{c_{0}}=\inf_{u\in S_{c_{0}}} J(u).
\end{aligned}
\end{equation*}
Set $u_{c_{0},t}(x)=t^{\frac{N}{2}}u_{c_{0}}(tx)$.
It follows from Lemma \ref{Lemma6.1} that
\begin{equation*}
\begin{aligned}
\min\limits_{t>0}
J(u_{c_{0},t})
=\min\limits_{t>0}
h_{u_{c_{0}}}(t)=0
=h_{u_{c_{0}}}(t_{u_{c_{0}}}),
\end{aligned}
\end{equation*}
where $t_{u_{c_{0}}}$ is the unique crtical point of $h_{u_{c_{0}}}(\cdot)$. From $J(u_{c_{0}})=0$, we further know that
\begin{equation*}
\begin{aligned}
1=t_{u_{c_{0}}}=\left[\frac{N(p-2)-4s}{2p(1-s)}
\frac{\int_{\mathbb{R}^{N}}
|u_{c_{0}}|^{p}
\mathrm{d}x}{\|u_{c_{0}}\|_{D^{1,2}(\mathbb{R}^{N})}^{2}}\right]^{\frac{2}{4-N(p-2)}},
\end{aligned}
\end{equation*}
which implies that
\begin{equation}\label{7.1}
\begin{aligned}
\|u_{c_{0}}\|_{D^{1,2}(\mathbb{R}^{N})}^{2}
=\frac{N(p-2)-4s}{2p(1-s)}
\int_{\mathbb{R}^{N}}
|u_{c_{0}}|^{p}
\mathrm{d}x.
\end{aligned}
\end{equation}
From $J(u_{c_{0}})=0$ and \eqref{7.1},
one deduces that
\begin{equation}\label{7.2}
\begin{aligned}
\|u_{c_{0}}\|_{D^{s,2}(\mathbb{R}^{N})}^{2}
=&
\frac{2}{p}
\int_{\mathbb{R}^{N}}
|u_{c_{0}}|^{p}
\mathrm{d}x
-\|u_{c_{0}}\|_{D^{1,2}(\mathbb{R}^{N})}^{2}\\
=&
\left(
\frac{4(1-s)}{2p(1-s)}
-\frac{N(p-2)-4s}{2p(1-s)}
\right)
\int_{\mathbb{R}^{N}}
|u_{c_{0}}|^{p}
\mathrm{d}x\\
=&
\frac{4-N(p-2)}{2p(1-s)}
\int_{\mathbb{R}^{N}}
|u_{c_{0}}|^{p}
\mathrm{d}x.
\end{aligned}
\end{equation}
Combining \eqref{7.1} and \eqref{7.2}, one has that
\begin{equation*}
\begin{aligned}
&\|u_{c_{0}}\|_{D^{1,2}(\mathbb{R}^{N})}^{\frac{N(p-2)-4s}{2-2s}}
\|u_{c_{0}}\|_{D^{s,2}(\mathbb{R}^{N})}^{\frac{4-N(p-2)}{2-2s}}\\
=&
\left[
\frac{N(p-2)-4s}{2p(1-s)}
\int_{\mathbb{R}^{N}}
|u_{c_{0}}|^{p}
\mathrm{d}x
\right]^{\frac{N(p-2)-4s}{4-4s}}
\left[
\frac{4-N(p-2)}{2p(1-s)}
\int_{\mathbb{R}^{N}}
|u_{c_{0}}|^{p}
\mathrm{d}x
\right]^{\frac{4-N(p-2)}{4-4s}}
\\
=&
\left[
\frac{N(p-2)-4s}{2p(1-s)}
\right]^{\frac{N(p-2)-4s}{4-4s}}
\left[
\frac{4-N(p-2)}{2p(1-s)}
\right]^{\frac{4-N(p-2)}{4-4s}}
\int_{\mathbb{R}^{N}}
|u_{c_{0}}|^{p}
\mathrm{d}x\\
=&
C_{1,s,p}^{-1}
c_{0}^{-\frac{p-2}{2}}
\int_{\mathbb{R}^{N}}
|u_{c_{0}}|^{p}
\mathrm{d}x\\
=&
C_{1,s,p}^{-1}\|u_{c_{0}}\|_{L^{2}(\mathbb{R}^{N})}^{-(p-2)}
\int_{\mathbb{R}^{N}}
|u_{c_{0}}|^{p}
\mathrm{d}x,
\end{aligned}
\end{equation*}
since $c_{0}=C_{1,s,p}^{-\frac{2}{p-2}}
\left[
\frac{2N+4-pN}{2p(1-s)}
\right]^{-\frac{2N+4-pN}{2(1-s)(p-2)}}
\left[
\frac{pN-2N-4s}{2p(1-s)}
\right]^{-\frac{pN-2N-4s}{2(1-s)(p-2)}}$ and $\|u_{c_{0}}\|_{L^{2}(\mathbb{R}^{N})}^{2}=c_{0}$.
\end{proof}

\section{Proof of Theorem \ref{Theorem1.6}}
In this section,
we prove Theorem \ref{Theorem1.6}.
Since the proof is similar to Theorems \ref{Theorem1.2}-\ref{Theorem1.5},
we just present a skeleton.
\begin{lemma}\label{Lemma8.1}
For $p\in(2+\frac{4s_{1}}{N},2+\frac{4s_{2}}{N})$ and $u\in H^{s_{2}}(\mathbb{R}^{N})$,
there exists a constant $C_{s_{2},s_{1},p}>0$ such that
\begin{equation}\label{8.1}
\begin{aligned}
\|u\|_{L^{p}(\mathbb{R}^{N})}^{p}
\leqslant C_{s_{2},s_{1},p}
\|u\|_{D^{s_{1},2}(\mathbb{R}^{N})}^{\frac{2N+4s_{2}-pN}{2(s_{2}-s_{1})}}
\|u\|_{D^{s_{2},2}(\mathbb{R}^{N})}^{\frac{pN-2N+4s_{1}}{2(s_{2}-s_{1})}}
\|u\|_{L^{2}(\mathbb{R}^{N})}^{p-2}.
\end{aligned}
\end{equation}
\end{lemma}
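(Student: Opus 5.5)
The plan is to repeat the proof of Lemma \ref{Lemma4.1} with the pair $(1,s)$ replaced by $(s_{2},s_{1})$. We interpolate $L^{p}$ between the two fractional mass-critical exponents $q_{1}:=2+\frac{4s_{1}}{N}<p<q_{2}:=2+\frac{4s_{2}}{N}$, and then bound each endpoint norm by the corresponding fractional Gagliardo--Nirenberg inequality \eqref{2.2}.

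\textbf{Step 1 (endpoint inequalities).} Apply \eqref{2.2} with $s=s_{1}$, $q=q_{1}$, and with $s=s_{2}$, $q=q_{2}$. In both cases $\frac{N(q-2)}{2s}=2$, so for $u\in H^{s_{2}}(\mathbb{R}^{N})\subset H^{s_{1}}(\mathbb{R}^{N})$ (the analogue of Lemma \ref{Lemma2.1}, proved by the same Fourier--H\"{o}lder argument) we get
\begin{equation*}
\|u\|_{L^{q_{i}}(\mathbb{R}^{N})}^{q_{i}}\leqslant C\|u\|_{D^{s_{i},2}(\mathbb{R}^{N})}^{2}\|u\|_{L^{2}(\mathbb{R}^{N})}^{\frac{4s_{i}}{N}},\qquad i=1,2.
\end{equation*}

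\textbf{Step 2 (H\"{o}lder interpolation).} Write $p=\theta q_{1}+(1-\theta)q_{2}$ with
\begin{equation*}
\theta=\frac{q_{2}-p}{q_{2}-q_{1}}=\frac{2N+4s_{2}-pN}{4(s_{2}-s_{1})},\qquad 1-\theta=\frac{pN-2N-4s_{1}}{4(s_{2}-s_{1})},
\end{equation*}
both of which lie in $(0,1)$. H\"{o}lder's inequality with exponents $1/\theta$ and $1/(1-\theta)$ gives $\int|u|^{p}\leqslant(\int|u|^{q_{1}})^{\theta}(\int|u|^{q_{2}})^{1-\theta}$. Inserting Step 1 produces the factor $\|u\|_{D^{s_{1},2}}^{2\theta}\|u\|_{D^{s_{2},2}}^{2(1-\theta)}$ together with $\|u\|_{L^{2}}^{\frac{4}{N}(\theta s_{1}+(1-\theta)s_{2})}$. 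Since $\frac{4}{N}(\theta s_{1}+(1-\theta)s_{2})=\theta(q_{1}-2)+(1-\theta)(q_{2}-2)=p-2$, the $L^{2}$ exponent is exactly $p-2$, as in \eqref{8.1}. The $D^{s_{1},2}$ exponent is $2\theta=\frac{2N+4s_{2}-pN}{2(s_{2}-s_{1})}$, again matching \eqref{8.1}.

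\textbf{Step 3 (the main obstacle: the $D^{s_{2},2}$ exponent).} The argument yields the exponent $2(1-\theta)=\frac{pN-2N-4s_{1}}{2(s_{2}-s_{1})}$ on $\|u\|_{D^{s_{2},2}}$. The statement \eqref{8.1} as printed has $\frac{pN-2N+4s_{1}}{2(s_{2}-s_{1})}$ instead, and this step cannot be closed in that form. Under the amplitude scaling $u\mapsto\lambda u$, the left side scales like $\lambda^{p}$. With the printed exponents, the right side scales like $\lambda^{p-2}\cdot\lambda^{\frac{4(s_{2}+s_{1})}{2(s_{2}-s_{1})}}$, and this power differs from $\lambda^{p}$ whenever $s_{1}>0$. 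Letting $\lambda\to0$ or $\lambda\to\infty$ therefore violates any finite constant, so no valid argument can reach the printed version.

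I would flag this in the text: \eqref{8.1} holds, and is what the argument proves, with the $D^{s_{2},2}$ exponent $\frac{pN-2N-4s_{1}}{2(s_{2}-s_{1})}$. This choice makes the two homogeneous exponents sum to $2$ and matches both the analogous exponent $\frac{pN-2N-4s}{2(1-s)}$ in Lemma \ref{Lemma4.1} and the bracket base $\frac{pN-2N-4s_{1}}{2p(s_{2}-s_{1})}$ in Theorem \ref{Theorem1.6}. The printed $+4s_{1}$ should be read as a sign typo, and the same correction should be carried into the corresponding exponents in Theorem \ref{Theorem1.6}.
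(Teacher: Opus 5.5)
Your proposal is correct and is essentially the paper's own proof: the paper likewise applies \eqref{2.2} at the two mass-critical exponents $2+\frac{4s_{1}}{N}$ and $2+\frac{4s_{2}}{N}$, then interpolates with H\"{o}lder using the same weights $\theta=\frac{2N+4s_{2}-pN}{4(s_{2}-s_{1})}$ and $1-\theta=\frac{pN-2N-4s_{1}}{4(s_{2}-s_{1})}$. Your diagnosis of the $D^{s_{2},2}$ exponent is also right: the paper's H\"{o}lder step actually yields $\frac{pN-2N-4s_{1}}{2(s_{2}-s_{1})}$, so the $+4s_{1}$ in its final display and in \eqref{8.1} is a sign typo, and your amplitude-scaling argument correctly shows the printed version cannot hold for $s_{1}>0$.
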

\begin{proof}
Taking $q=2+\frac{4s_{1}}{N}$ and  $q=2+\frac{4s_{2}}{N}$ in \eqref{2.2}, we have that
\begin{equation}\label{8.2}
\begin{aligned}
\|u\|_{L^{2+\frac{4s_{1}}{N}}(\mathbb{R}^{N})}^{2+\frac{4s_{1}}{N}}
\leqslant&
C
\|u\|_{D^{s_{1},2}(\mathbb{R}^{N})}^{2}
\|u\|_{L^{2}(\mathbb{R}^{N})}^{\frac{4s_{1}}{N}}.
\end{aligned}
\end{equation}
and
\begin{equation}\label{8.3}
\begin{aligned}
\|u\|_{L^{2+\frac{4s_{2}}{N}}(\mathbb{R}^{N})}^{2+\frac{4s_{2}}{N}}
\leqslant&
C
\|u\|_{D^{s_{2},2}(\mathbb{R}^{N})}^{2}
\|u\|_{L^{2}(\mathbb{R}^{N})}^{\frac{4s_{2}}{N}}.
\end{aligned}
\end{equation}
Applying the H\"{o}lder inequality for $p\in(2+\frac{4s_{1}}{N},2+\frac{4s_{2}}{N})$, we get that
\begin{equation*}
\begin{aligned}
\int_{\mathbb{R}^{N}}|u|^{p}\mathrm{d}x
\leqslant
\left(
\int_{\mathbb{R}^{N}}
|u|^{2+\frac{4s_{1}}{N}}
\mathrm{d}x
\right)^{\frac{(2+\frac{4s_{2}}{N})-p}{\frac{4}{N}(s_{2}-s_{1})}}
\left(
\int_{\mathbb{R}^{N}}
|u|^{2+\frac{4s_{2}}{N}}
\mathrm{d}x
\right)^{\frac{p-(2+\frac{4s_{1}}{N})}{\frac{4}{N}(s_{2}-s_{1})}},
\end{aligned}
\end{equation*}
and then using \eqref{8.2} and \eqref{8.3},
\begin{equation*}
\begin{aligned}
\int_{\mathbb{R}^{N}}|u|^{p}\mathrm{d}x
\leqslant C
\|u\|_{D^{s_{1},2}(\mathbb{R}^{N})}^{\frac{2N+4s_{2}-pN}{2(s_{2}-s_{1})}}
\|u\|_{D^{s_{2},2}(\mathbb{R}^{N})}^{\frac{pN-2N+4s_{1}}{2(s_{2}-s_{1})}}
\|u\|_{L^{2}(\mathbb{R}^{N})}^{p-2}.
\end{aligned}
\end{equation*}
\end{proof}

We define the Weinstein functional
\begin{equation*}
\begin{aligned}
\bar{W}_{p}(u)=
\frac{
\|u\|_{D^{s_{1},2}(\mathbb{R}^{N})}^{\frac{2N+4s_{2}-pN}{2(s_{2}-s_{1})}}
\|u\|_{D^{s_{2},2}(\mathbb{R}^{N})}^{\frac{pN-2N+4s_{1}}{2(s_{2}-s_{1})}}
\|u\|_{L^{2}(\mathbb{R}^{N})}^{p-2}
}{\|u\|_{L^{p}(\mathbb{R}^{N})}^{p}}
\end{aligned}
\end{equation*}
Similar to Theorem \ref{Theorem1.2},
we know that there exists $\bar{w}\in H^{s_{2}}(\mathbb{R}^{N})\backslash\{0\}$ such that
\begin{equation*}
\begin{aligned}
\|\bar{w}\|_{D^{s_{2},2}(\mathbb{R}^{N})}^{2}= \int_{\mathbb{R}^{2}}|\bar{w}|^{2}\mathrm{d}x=1,~
\bar{W}_{p}(\bar{w})=C_{s_{2},s_{1},p}^{-1},
\end{aligned}
\end{equation*}
where $\bar{w}$ is a radial non-negative minimizer for the Weinstein functional $\bar{W}_{p}$,
and $\bar{w}$ is a weak solution to the following equation
\begin{equation*}
\begin{aligned}
&(-\Delta)^{s_{2}}\bar{w}
+\frac{2N+4s_{2}-pN}{pN-2N-4s_{1}}
(-\Delta)^{s_{1}}\bar{w}
\frac{1}{\|\bar{w}\|_{D^{s_{1},2}(\mathbb{R}^{N})}^{2}}\\
&+\frac{2(p-2)(s_{2}-s_{1})}{pN-2N-4s_{1}}\bar{w}
=
\frac{2p(s_{2}-s_{1})}{pN-2N-4s_{1}}
|w|^{p-2}w
\frac{1}
{\|\bar{w}\|_{L^{p}(\mathbb{R}^{N})}^{p}}.
\end{aligned}
\end{equation*}
We define
\begin{equation}\label{8.4}
\begin{aligned}
\bar{Q}(x)=\lambda \bar{w}(\mu x),
\end{aligned}
\end{equation}
where
\begin{equation*}
\begin{aligned}
\begin{cases}
\mu=
\left[
\frac{2N+4s_{2}-pN}{pN-2N-4s_{1}}
\frac{1}{\|\bar{w}\|_{D^{s_{1},2}(\mathbb{R}^{N})}^{2}}
\right]^{\frac{1}{2(s_{1}-s_{2})}},\\
\lambda
=
\left[
\frac{2p(s_{2}-s_{1})}{pN-2N-4s_{1}}
\frac{1}{\|\bar{w}\|_{L^{p}(\mathbb{R}^{N})}^{p}}
\right]^{\frac{1}{p-2}}
\left[
\frac{2N+4s_{2}-pN}{pN-2N-4s_{1}}
\frac{1}{\|\bar{w}\|_{D^{s_{1},2}(\mathbb{R}^{N})}^{2}}
\right]^{\frac{2s_{2}}{2(s_{1}-s_{2})(p-2)}},
\end{cases}
\end{aligned}
\end{equation*}
then $\bar{Q}$ satisfies
\begin{equation*}
\begin{aligned}
(-\Delta)^{s_{2}}\bar{Q}
+
(-\Delta)^{s_{1}}\bar{Q}
+\omega \bar{Q}
=|\bar{Q}|^{p-2}\bar{Q},
\end{aligned}
\end{equation*}
with
\begin{equation*}
\begin{aligned}
\omega
=
\frac{2(p-2)(s_{2}-s_{1})}{pN-2N-4s_{1}}
\left[
\frac{2N+4s_{2}-pN}{pN-2N-4s_{1}}
\frac{1}{\|\bar{w}\|_{D^{s_{1},2}(\mathbb{R}^{N})}^{2}}
\right]^{\frac{2s_{2}}{2(s_{1}-s_{2})(p-2)}}.
\end{aligned}
\end{equation*}
Particularly, we know that $\bar{Q}$ is non-negative and radially symmetric.

Using \eqref{8.4} and the arguments in Lemmas \ref{Lemma5.1}-\ref{Lemma5.3}, we obtain that
\begin{equation*}
\begin{aligned}
\|\bar{Q}\|^{2}_{D^{s_{1},2}(\mathbb{R}^{N})}
=\frac{2N+4s_{2}-pN}{2p(s_{2}-s_{1})}
\int_{\mathbb{R}^{N}}|\bar{Q}|^{p}\mathrm{d}x,
\end{aligned}
\end{equation*}
and
\begin{equation*}
\begin{aligned}
\|\bar{Q}\|^{2}_{D^{s_{2},2}(\mathbb{R}^{N})}
=
\frac{pN-2N-4s_{1}}{2p(s_{2}-s_{1})}
\int_{\mathbb{R}^{N}}|\bar{Q}|^{p}\mathrm{d}x,
\end{aligned}
\end{equation*}
and
\begin{equation*}
\begin{aligned}
&
\frac{1}{2}
\int_{\mathbb{R}^{N}}
\int_{\mathbb{R}^{N}}
\frac{|\bar{Q}(x)-\bar{Q}(y)|^{2}}{|x-y|^{N+2s_{2}}}
\mathrm{d}x
\mathrm{d}y
+\frac{1}{2}
\int_{\mathbb{R}^{N}}
\int_{\mathbb{R}^{N}}
\frac{|\bar{Q}(x)-\bar{Q}(y)|^{2}}{|x-y|^{N+2s_{1}}}
\mathrm{d}x
\mathrm{d}y
-\frac{1}{p}
\int_{\mathbb{R}^{N}}
|\bar{Q}|^{p}
\mathrm{d}x=0.
\end{aligned}
\end{equation*}
Then repeating the steps of the proof of Theorem \ref{Theorem1.3},
one has that $\bar{Q}$ is an energy ground state solution with critical mass $\bar{c}_{0}$,
\begin{equation*}
\begin{aligned}
\int_{\mathbb{R}^{N}}
|\bar{Q}|^{p}
\mathrm{d}x
=\bar{c}_{0},
\end{aligned}
\end{equation*}
and
\begin{equation*}
\begin{aligned}
C_{s_{2},s_{1},p}
=
\left[
\frac{2N+4s_{2}-pN}{2p(s_{2}-s_{1})}
\right]^{-\frac{2N+4s_{2}-pN}{4(s_{2}-s_{1})}}
\left[
\frac{pN-2N-4s_{1}}{2p(s_{2}-s_{1})}
\right]^{-\frac{pN-2N+4s_{1}}{4(s_{2}-s_{1})}}
\frac{1}{\bar{c}_{0}^{\frac{p-2}{2}}}.
\end{aligned}
\end{equation*}
This implies that
\begin{equation*}
\begin{aligned}
\bar{c}_{0}
=
\left[
\frac{2N+4s_{2}-pN}{2p(s_{2}-s_{1})}
\right]^{\frac{2N+4s_{2}-pN}{2(s_{2}-s_{1})(p-2)}}
\left[
\frac{pN-2N-4s_{1}}{2p(s_{2}-s_{1})}
\right]^{\frac{pN-2N+4s_{1}}{2(s_{2}-s_{1})(p-2)}}
C_{s_{2},s_{1},p}^{-\frac{2}{p-2}}.
\end{aligned}
\end{equation*}

Let $u_{\bar{c}_{0}}\in H^{s_{2}}(\mathbb{R}^{N})$ be an energy ground state solution for equation \eqref{MF} with $c=\bar{c}_{0}$.
Then repeating the argument of Theorem \ref{Theorem1.5},
we have that
\begin{equation*}
\begin{aligned}
\|u_{\bar{c}_{0}}\|_{D^{s_{2},2}(\mathbb{R}^{N})}^{\frac{4-N(p-2)}{2(1-s)}}
\|u_{\bar{c}_{0}}\|_{D^{s_{1},2}(\mathbb{R}^{N})}^{\frac{N(p-2)-4s}{2(1-s)}}
=C_{s_{2},s_{1},p}^{-1}\|u_{\bar{c}_{0}}\|_{L^{2}(\mathbb{R}^{N})}^{-(p-2)}
\int_{\mathbb{R}^{N}}
|u_{\bar{c}_{0}}|^{p}
\mathrm{d}x.
\end{aligned}
\end{equation*}
Moreover, we know that $u_{\bar{c}_{0}}$ is the optimizer of inequality \eqref{1.3}.

\section*{Conflict of interest}
The author declares that he has no known competing financial interests or personal relationships that could have appeared to influence the work reported in
this article.

\section*{Data availability statement}
No data were used for the research described in the article.


\small
\begin{thebibliography}{10}




\bibitem{Biagi-Dipierro-Valdinoci-Vecchi2021CPDE}
S. Biagi, S. Dipierro, E. Valdinoci, E. Vecchi,
{Mixed local and nonlocal elliptic operators: regularity and maximum principles},
Communications in Partial Differential Equations,
\textbf{47} (2021), 585-629.

\bibitem{Biswas-Jakobsen-Karlsen2010SIAM-J-N-A}
I. Biswas, E. Jakobsen, K. Karlsen,
{Difference-quadrature schemes for nonlinear degenerate parabolic
integro-PDE},
SIAM Journal on Numerical Analysis,
\textbf{48} (2010), 1110-1135.




\bibitem{Brezis-Lieb1983PAMS}
H. Br\'{e}zis, E. Lieb,
{A relation between pointwise convergence of functions and convergence of functionals},
Proceedings of the American Mathematical Society,
\textbf{88} (1983), 486-490.


\bibitem{Burchard-Hajaiej2006JFA}
A. Burchard, H. Hajaiej,
Rearrangement inequalities for functionals with monotone integrands,
Journal of Functional Analysis,
233 (2006), 561-582.


\bibitem{Cabre-Dipierro-Valdinoci2022ARMA}
X. Cabr\'{e}, S. Dipierro, E. Valdinoci,
{The Bernstein technique for integro-differential equations},
Archive for Rational Mechanics and Analysis,
\textbf{243} (2022), 1597-1652.

\bibitem{Catto-Dolbeault-Sanchez-Soler2013MMMAS}
I. Catto, J. Dolbeault, O. Sanchez, J. Soler,
{Existence of steady states for the Maxwell-Schrodinger-Poisson system: exploring the applicability of the concentration-compactness principle},
Mathematical Models and Methods in Applied Science,
\textbf{23} (2013), 1915-1938.




\bibitem{Cangiotti-Caponi-Maione-Vitillaro2023Milan}
N. Cangiotti, M. Caponi, A. Maione, E. Vitillaro,
{Klein-Gordon-Maxwell equations driven by mixed local-nonlocal operators},
Milan Journal of Mathematics,
\textbf{91} (2023), 375-403.

\bibitem{Cangiotti-Caponi-Maione-Vitillaro2024FFAC}
N. Cangiotti, M. Caponi, A. Maione, E. Vitillaro,
{Schr\"{o}dinger-Maxwell equations driven by mixed local-nonlocal operators},
Fractional Calculus and Applied Analysis, (2024).



\bibitem{ChenHY-Bhakta-Hajaiej2022JDE}
H. Chen, M. Bhakta, H. Hajaiej,
{On the bounds of the sum of eigenvalues for a Dirichlet
problem involving mixed fractional Laplacians},
Journal of Differential Equations,
\textbf{317} (2022), 1-31.



\bibitem{ChenZQ-Kim-Song-Vondracek2012TAMS}
Z. Chen, P. Kim, R. Song, Z. Vondracek,
{Boundary Harnack principle for $\nabla+\nabla^{\alpha/2}$},
Transactions of the American Mathematical Society,
\textbf{364} (2012), 4169-4205.



\bibitem{Chergui-Gou-Hajaiej2023CVPDE}
L. Chergui, T. Gou, H. Hajaiej,
{Existence and dynamics of normalized solutions to nonlinear Schr\"odinger equations with mixed fractional Laplacians},
Calculus of Variations and Partial Differential Equations,
\textbf{62} (2023).



\bibitem{Llave-Valdinoci2009Poincare}
R. de la Llave, E. Valdinoci,
{A generalization of Aubry-Mather theory to partial differential equations and
pseudo-differential equations},
Annales de l'Institut Henri Poincar\'{e} C. Analyse Non Lin\'{e}aire,
\textbf{26} (2009), 1309-1344.



\bibitem{Dipierro-Valdinocci2021PA}
S. Dipierro, E. Valdinoci,
{Description of an ecological niche for a mixed local/nonlocal dispersal: an evolution equation and a new Neumann condition arising from the superposition of Brownian and L\'{e}vy processes},
Physica A: Statistical Mechanics and its Applications,
\textbf{575} (2021).

\bibitem{Dipierro-Lippi-Valdinocci2022AIHP}
S. Dipierro, E. Lippi, E. Valdinocci,
{(non) local logistic equations with Neumann conditions},
Annales de l'Institut Henri Poincar\'{e} C. Analyse Non Lin\'{e}aire,
\textbf{40} (2022), 1093-1166.


\bibitem{Dipierro-Su-Valdinoci-Zhang2025DCDS}
S. Dipierro, X. Su, E. Valdinoci, J. Zhang,
Qualitative properties of positive solutions of a mixed order nonlinear Schr\"{o}dinger equation,
Discrete and Continuous Dynamical Systems,
45 (2025), 1948-2000.

\bibitem{Giovannardi-Mugnai-Vecchi2023JMAA}
G. Giovannardi, D. Mugnai, E. Vecchi,
{An Ahmad-Lazer-Paul-type result for indefinite mixed local-nonlocal problems},
Journal of Mathematical Analysis and Applications, \textbf{527} (2023).


\bibitem{Hajaiej-Stuart2003PLMS}
H. Hajaiej,  C. Stuart,
Symmetrization inequalities for composition operators of Caratheodory type,
Proceedings of the London Mathematical Society,
87 (2003), 396-418.

\bibitem{Hajaiej2005Edinburgh}
H. Hajaiej,
Cases of equality and strict inequality in the extended Hardy-Littlewood inequalities,
Proceedings of the Royal Society of Edinburgh, 135 (2005), 643-661.

\bibitem{Hajaiej-Perera2022DIE}
H. Hajaiej, K. Perera,
{Ground state and least positive energy solutions of elliptic problems involving mixed fractional p-Laplacians},
Differential Integral Equations, \textbf{35} (2022), 173-190.






\bibitem{Lieb2001}
E. Lieb, M. Loss,
{Analysis}. Graduate Studies in Mathematics, 14. American Mathematical Society, Providence, RI, 2001. xxii+346 pp.

\bibitem{LuoTJ-Hajaiej2022ANS}
T. Luo, H. Hajaiej,
{Normalized solutions for a class of scalar field equations involving mixed fractional Laplacians},
Advanced Nonlinear Studies,
\textbf{22} (2022), 228-247.

\bibitem{Maione-Mugnai-Vecchi2023FFAC}
A. Maione, D. Mugnai, E. Vecchi, {Variational methods for nonpositive mixed local-nonlocal operators},
Fractional Calculus and Applied Analysis, \textbf{26} (2023), 943-961.

\bibitem{Mimica2016PLMS}
A. Mimica,
{Heat kernel estimates for subordinate Brownian motions},
Proceedings of the London Mathematical Society,
\textbf{113} (2016), 627-648.


\bibitem{Nezza-Palatucci-Valdinoci2012BSM}
E. Nezza, G. Palatucci, E. Valdinoci,
{Hitchhikers guide to the fractional Sobolev spaces},
Bulletin des Sciences Mathematiques,
\textbf{136} (2012), 521-573.

\bibitem{Su-Valdinoci-Wei-Zhang2022MZ}
X. Su, E. Valdinoci, Y. Wei, J. Zhang,
Regularity results for solutions of mixed local and nonlocal elliptic equations,
Mathematische Zeitschrift, 302 (2022), 1855-1878.

\bibitem{Su-Valdinoci-Wei-Zhang2025JDE}
X. Su, E. Valdinoci, Y. Wei, J. Zhang,
On some regularity properties of mixed local and nonlocal elliptic equations,
Journal of Differential Equations, 416 (2025), 576-613.

\bibitem{Weinstein1983CMP}
M. Weinstein,
{Nonlinear Schr\"{o}dinger Equations and Sharp Interpolation Estimates},
Communications in Mathematical
Physics,
\textbf{87} (1983), 567-576.

\bibitem{Willem1996Book}
M. Willem,
{Minimax Theorems},
Progress in Nonlinear Differential Equations and Their Applications,
\textbf{24} (1996).









\end{thebibliography}
\end{document}